\DeclareSymbolFont{bbold}{U}{bbold}{m}{n}
\DeclareSymbolFontAlphabet{\mathbbold}{bbold}
\DeclareMathOperator{\JH}{JH}
\DeclareMathOperator{\weier}{Weier}
\DeclareMathOperator{\occ}{Occ}
\DeclareMathOperator{\cyc}{cyc}
\DeclareMathOperator{\res}{res}
\DeclareMathOperator{\ns}{ns}
\DeclareMathOperator{\s}{s}
\DeclareMathOperator{\aut}{Aut}
\DeclareMathOperator{\id}{id}
\DeclareMathOperator{\tors}{tors}
\DeclareMathOperator{\GL}{GL}
\DeclareMathOperator{\GSp}{GSp}
\DeclareMathOperator{\SL}{SL}
\DeclareMathOperator{\PGL}{PGL}
\DeclareMathOperator{\PSL}{PSL}
\DeclareMathOperator{\tr}{tr}
\DeclareMathOperator{\rad}{rad}
\DeclareMathOperator{\pr}{pr}
\chardef\bslash=`\\ % p. 424, TeXbook
\begin{document}

%\numberwithin{equation}{section}

\newtheorem{Theorem}{Theorem}[section]

\newtheorem{cor}[Theorem]{Corollary}
\newtheorem{goal}[Theorem]{Goal}

\newtheorem{Conjecture}[Theorem]{Conjecture}
\newtheorem{guess}[Theorem]{Guess}

\newtheorem{exercise}[Theorem]{Exercise}
\newtheorem{Question}[Theorem]{Question}
\newtheorem{lemma}[Theorem]{Lemma}
\newtheorem{property}[Theorem]{Property}
\newtheorem{proposition}[Theorem]{Proposition}
\newtheorem{ax}[Theorem]{Axiom}
\newtheorem{claim}[Theorem]{Claim}

\newtheorem{nTheorem}{Surjectivity Theorem}

\theoremstyle{definition}
\newtheorem{Definition}[Theorem]{Definition}
\newtheorem{problem}[Theorem]{Problem}
\newtheorem{question}[Theorem]{Question}
\newtheorem{Example}[Theorem]{Example}

\newtheorem{remark}[Theorem]{Remark}
\newtheorem{diagram}{Diagram}
\newtheorem{Remark}[Theorem]{Remark}
\newcommand{\diagref}[1]{diagram~\ref{#1}}
\newcommand{\thmref}[1]{Theorem~\ref{#1}}
\newcommand{\secref}[1]{Section~\ref{#1}}
\newcommand{\subsecref}[1]{Subsection~\ref{#1}}
\newcommand{\lemref}[1]{Lemma~\ref{#1}}
\newcommand{\corref}[1]{Corollary~\ref{#1}}
\newcommand{\exampref}[1]{Example~\ref{#1}}
\newcommand{\remarkref}[1]{Remark~\ref{#1}}
\newcommand{\corlref}[1]{Corollary~\ref{#1}}
\newcommand{\claimref}[1]{Claim~\ref{#1}}
\newcommand{\defnref}[1]{Definition~\ref{#1}}
\newcommand{\propref}[1]{Proposition~\ref{#1}}
\newcommand{\prref}[1]{Property~\ref{#1}}
\newcommand{\itemref}[1]{(\ref{#1})}
\newcommand{\ul}[1]{\underline{#1}}

%\newcommand{\bysame}{\mbox{\rule{3em}{.4pt}}\,}

%       Math definitions
\newcommand{\CE}{\mathcal{E}}
\newcommand{\CG}{\mathcal{G}}\newcommand{\CV}{\mathcal{V}}
\newcommand{\CL}{\mathcal{L}}
\newcommand{\CM}{\mathcal{M}}
\newcommand{\A}{\mathcal{A}}
\newcommand{\CO}{\mathcal{O}}
\newcommand{\B}{\mathcal{B}}
\newcommand{\CS}{\mathcal{S}}
\newcommand{\CX}{\mathcal{X}}
\newcommand{\CY}{\mathcal{Y}}
\newcommand{\CT}{\mathcal{T}}
\newcommand{\CW}{\mathcal{W}}
\newcommand{\CJ}{\mathcal{J}}
\newcommand{\Q}{\mathbb{Q}}
\newcommand{\Z}{\mathbb{Z}}

\newcommand{\st}{\sigma}
\renewcommand{\k}{\varkappa}
\newcommand{\Frac}{\mbox{Frac}}
\newcommand{\XC}{\mathcal{X}}
\newcommand{\wt}{\widetilde}
\newcommand{\wh}{\widehat}
\newcommand{\mk}{\medskip}
\renewcommand{\sectionmark}[1]{}
\renewcommand{\Im}{\operatorname{Im}}
\renewcommand{\Re}{\operatorname{Re}}
\newcommand{\la}{\langle}
\newcommand{\ra}{\rangle}
\newcommand{\LND}{\mbox{LND}}
\newcommand{\Pic}{\mbox{Pic}}
\newcommand{\lnd}{\mbox{lnd}}
\newcommand{\GLND}{\mbox{GLND}}\newcommand{\glnd}{\mbox{glnd}}
\newcommand{\Der}{\mbox{DER}}\newcommand{\DER}{\mbox{DER}}
\renewcommand{\th}{\theta}
\newcommand{\ve}{\varepsilon}
\newcommand{\1}{^{-1}}
\newcommand{\iy}{\infty}
\newcommand{\iintl}{\iint\limits}
\newcommand{\capl}{\operatornamewithlimits{\bigcap}\limits}
\newcommand{\cupl}{\operatornamewithlimits{\bigcup}\limits}
\newcommand{\suml}{\sum\limits}
\newcommand{\ord}{\operatorname{ord}}
\newcommand{\gal}{\operatorname{Gal}}
\newcommand{\bk}{\bigskip}
\newcommand{\fc}{\frac}
\newcommand{\g}{\gamma}
\newcommand{\be}{\beta}
\newcommand{\dl}{\delta}
\newcommand{\Dl}{\Delta}
\newcommand{\lm}{\lambda}
\newcommand{\Lm}{\Lambda}
\newcommand{\om}{\omega}
\newcommand{\ov}{\overline}
\newcommand{\vp}{\varphi}
\newcommand{\kap}{\varkappa}

\newcommand{\Vp}{\Phi}
\newcommand{\Varphi}{\Phi}
\newcommand{\BC}{\mathbb{C}}
\newcommand{\C}{\mathbb{C}}\newcommand{\BP}{\mathbb{P}}
\newcommand{\BQ}{\mathbb {Q}}
\newcommand{\BM}{\mathbb{M}}
\newcommand{\mbh}{\mathbb{H}}
\newcommand{\BR}{\mathbb{R}}\newcommand{\BN}{\mathbb{N}}
\newcommand{\BZ}{\mathbb{Z}}\newcommand{\BF}{\mathbb{F}}
\newcommand{\BA}{\mathbb {A}}
\renewcommand{\Im}{\operatorname{Im}}
\newcommand{\idd}{\operatorname{id}}
\newcommand{\ep}{\epsilon}
\newcommand{\tp}{\tilde\partial}
\newcommand{\doe}{\overset{\text{def}}{=}}
\newcommand{\supp} {\operatorname{supp}}
\newcommand{\loc} {\operatorname{loc}}
% \define {\un}{\underline}
%\newcommand {\ov}{\overline}
\newcommand{\de}{\partial}
\newcommand{\z}{\zeta}
\renewcommand{\a}{\alpha}
\newcommand{\G}{\Gamma}
\newcommand{\der}{\mbox{DER}}

\newcommand{\Spec}{\operatorname{Spec}}
\newcommand{\Sym}{\operatorname{Sym}}
\newcommand{\Aut}{\operatorname{Aut}}

\newcommand{\Idd}{\operatorname{Id}}

\newcommand{\tG}{\widetilde G}
%%%%%%%%%%%%%%%%%%%%%%%%%%%%%%%%%
\begin{comment}
\newcommand{\FX}{\mathfrac {X}}
\newcommand{\FV}{\mathfrac {V}}
\newcommand{\SX}{\mathrsfs {X}}
\newcommand{\SV}{\mathrsfs {V}}
\newcommand{\SO}{\mathrsfs {O}}
\newcommand{\SD}{\mathrsfs {D}}
\newcommand{\Sr}{\rho}
\newcommand{\SR}{\mathrsfs {R}}

\end{comment}

\newcommand{\FX}{\mathfrac {X}}
\newcommand{\FV}{\mathfrac {V}}
\newcommand{\SX}{\mathcal {X}}
\newcommand{\SV}{\mathcal {V}}
\newcommand{\SO}{\mathcal {O}}
\newcommand{\SD}{\mathcal {D}}
\newcommand{\Sr}{\rho}
\newcommand{\SR}{\mathcal {R}}
\newcommand{\cl}{\mathcal{C}}
\newcommand{\ok}{\mathcal{O}_K}
\newcommand{\ab}{\mathcal{AB}}

\setcounter{equation}{0} \setcounter{section}{0}

\newcommand{\ds}{\displaystyle}
\newcommand{\gl}{\lambda}
\newcommand{\gL}{\Lambda}
\newcommand{\gge}{\epsilon}
\newcommand{\gG}{\Gamma}
\newcommand{\ga}{\alpha}
\newcommand{\gb}{\beta}
\newcommand{\gd}{\delta}
\newcommand{\gD}{\Delta}
\newcommand{\gs}{\sigma}
\newcommand{\mbq}{\mathbb{Q}}
\newcommand{\mbr}{\mathbb{R}}
\newcommand{\mbz}{\mathbb{Z}}
\newcommand{\mbc}{\mathbb{C}}
\newcommand{\mbn}{\mathbb{N}}
\newcommand{\mbp}{\mathbb{P}}
\newcommand{\mbf}{\mathbb{F}}
\newcommand{\mbe}{\mathbb{E}}
\newcommand{\lcm}{\text{lcm}\,}
\newcommand{\mf}[1]{\mathfrak{#1}}
\newcommand{\ol}[1]{\overline{#1}}
\newcommand{\mc}[1]{\mathcal{#1}}
\newcommand{\nequiv}{\equiv\hspace{-.07in}/\;}
\newcommand{\bnequiv}{\equiv\hspace{-.13in}/\;}

\title{A bound for the conductor of an open subgroup of $GL_2$ associated to an elliptic curve}
\author{Nathan Jones}
\address{University of Illinois at Chicago, 851 S Morgan St, SEO 322, Chicago, IL 60614}
\email{ncjones@uic.edu}

\renewcommand{\thefootnote}{\fnsymbol{footnote}} 
\footnotetext{\emph{Key words and phrases:} Elliptic curves, Galois representations}     
\renewcommand{\thefootnote}{\arabic{footnote}}

\renewcommand{\thefootnote}{\fnsymbol{footnote}} 
\footnotetext{\emph{2010 Mathematics Subject Classification:} Primary 11G05, 11F80}     
\renewcommand{\thefootnote}{\arabic{footnote}}

%\thanks{This research was supported in part by a MONKEYDONK grant.}

\date{}

\begin{abstract}
Given an elliptic curve $E$ without complex multiplication defined over a number field $K$, consider the image of the Galois representation defined by letting Galois act on the torsion of $E$.  Serre's open image theorem implies that there is a positive integer $m$ for which the Galois image is completely determined by its reduction modulo $m$.  In this note, we prove a bound on the smallest such $m$ in terms of standard invariants associated with $E$. The bound is sharp and improves upon previous results.
\end{abstract}

\maketitle

\section{introduction}

Let $K$ be a number field, let $E/K$ be an elliptic curve and let $E_{\tors}$ denote its torsion subgroup.  Denote by $G_K := \gal(\ol{K}/K)$ the absolute Galois group of $K$ and consider the Galois representation
\[
\rho_{E,K} : G_K \longrightarrow \aut(E_{\tors}) \simeq \GL_2(\hat{\mbz}) 
\]
defined by letting $G_K$ act on the torsion of $E$ and choosing a $\hat{\mbz}$-basis thereof.
A celebrated theorem of J.-P. Serre \cite{serre} states that, if $E$ has no complex multiplication, then the image of $\rho_{E,K}$ is open inside $\GL_2(\hat{\mbz})$, or equivalently that
\begin{equation} \label{serresopenimage}
\left[ \GL_2(\hat{\mbz}) : \rho_{E,K}(G_K) \right] < \infty.
\end{equation}
Consequently, one may find a positive integer $m$ with the property that
\begin{equation*} %\label{defofme}
\ker\left( \GL_2(\hat{\mbz}) \rightarrow \GL_2(\mbz/m\mbz) \right) \subseteq \rho_{E,K}(G_K).
\end{equation*}
\begin{Definition} \label{definitionofme}
Given an open subgroup $G \subseteq \GL_2(\hat{\mbz})$, we define the positive integer $m_G$ by
\[
m_G := \min \{ m \in \mbn : \ker\left( \GL_2(\hat{\mbz}) \rightarrow \GL_2(\mbz/m\mbz) \right) \subseteq G \}
\]
and call it the {\bf{conductor of $G$}}.  In case $G = \rho_{E,K}(G_K)$ for an elliptic curve $E$ defined over a number field $K$ and without complex multiplication, we denote the conductor of $G$ by $m_{E,K}$.
\end{Definition}
The purpose of this note is to prove the following upper bound for $m_{E,K}$.  In its statement, $\gD_K$ denotes the absolute discriminant of the number field $K$, $\gD_E$ denotes the minimal discriminant ideal attached to the elliptic curve $E$, $N_{K/\mbq} : K^\times \longrightarrow \mbq^\times$ denotes the usual norm map and 
\[
\rad(m) := \prod_{\ell \mid m \atop \ell \text{ prime}} \ell
\]
denotes the radical of the positive integer $m$.  Given a non-zero ideal $I \subseteq \mc{O}_K$, we identify the ideal $N_{K/\mbq}(I) \subseteq \mbz$ with the (unique) positive integer that generates it, and thus we may regard $N_{K/\mbq}(\gD_E) \in \mbn$.
\begin{Theorem} \label{maintheorem}
Let $K$ be a number field, let $E$ be an elliptic curve over $K$ without complex multiplication, and let $m_{E,K} \in \mbn$ be as in Definition \ref{definitionofme}.  Then one has
\[
m_{E,K} \leq 2 \cdot \left[ \GL_2(\hat{\mbz}) : \rho_{E,K}(G_K) \right] \cdot \rad\left( |\gD_K N_{K/\mbq}(\gD_E) | \right).
\]
\end{Theorem}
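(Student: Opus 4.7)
The plan is to bound $m_{E,K}$ one prime at a time. Let $H := \rho_{E,K}(G_K)$, and for each prime $\ell$ let $H_\ell \subseteq \GL_2(\mbz_\ell)$ denote its projection and $\ell^{n_\ell}$ the conductor of $H_\ell$ as an open subgroup of $\GL_2(\mbz_\ell)$. Set $D := |\gD_K N_{K/\mbq}(\gD_E)|$, $R := \rad(D)$, $I := [\GL_2(\hat{\mbz}) : H]$, and $I_\ell := [\GL_2(\mbz_\ell):H_\ell]$. Writing $m_{E,K} = \prod_\ell \ell^{e_\ell}$, one has $e_\ell \geq n_\ell$ with any excess coming from entanglement among the $H_\ell$. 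I plan to split primes into ``good'' ($\ell \geq 5$, $\ell \nmid D$) and ``exceptional'' ($\ell \in \{2,3\}$ or $\ell \mid D$) sets, show that good-prime contributions are bounded by the corresponding part of $I$, and cover exceptional-prime contributions by the factor $2R$ in the theorem statement.

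For a good prime $\ell$, the main tool is Serre's lifting lemma: for $\ell \geq 5$, a closed subgroup of $\GL_2(\mbz_\ell)$ whose reduction modulo $\ell$ contains $\SL_2(\mbf_\ell)$ already contains $\SL_2(\mbz_\ell)$. Because $\ell$ is unramified in $K$ (since $\ell \nmid \gD_K$) and $\mbq(\mu_\ell)/\mbq$ is totally ramified at $\ell$, one has $K \cap \mbq(\mu_{\ell^\infty}) = \mbq$, so $\det H_\ell = \mbz_\ell^\times$; combined with the lifting lemma this forces $H_\ell = \GL_2(\mbz_\ell)$, hence $n_\ell = 0$, whenever $H_\ell \bmod \ell$ contains $\SL_2(\mbf_\ell)$. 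Otherwise Dickson's classification restricts $H_\ell \bmod \ell$ to a Borel, a normalizer of a split or non-split Cartan, or an exceptional subgroup; in each case I would verify the numerical bound $\ell^{n_\ell} \leq I_\ell$ by a direct index computation inside $\GL_2(\mbz/\ell^{n_\ell})$, using that $H$ comes from an elliptic curve (so the local representation at $\ell$ is crystalline) to rule out pathological deep configurations that a general open subgroup could exhibit.

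For exceptional primes I plan to use coarser local bounds of the form $\ell^{n_\ell} \leq \ell \cdot I_\ell$ (with an extra factor of $2$ at $\ell = 2$ when $2 \mid D$), justified by analysis of the local image of inertia at primes of $K$ above $\ell$ (Tate curve for multiplicative reduction, Serre--Tate or direct calculation at small primes of good reduction). The resulting extra factor of $\ell$ is absorbed by $R = \rad(D)$ when $\ell \mid D$, and by the leading $2$ when $\ell = 2$. Entanglement between different $H_\ell$ is handled by the factorization $I = \bigl(\prod_\ell I_\ell\bigr) \cdot [\prod_\ell H_\ell : H]$; the entanglement index is a divisor of $I$ and absorbs the excess $e_\ell - n_\ell$, so that $\prod_\ell \ell^{e_\ell} \leq 2 \cdot I \cdot R$ as required.

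The main obstacle is verifying $\ell^{n_\ell} \leq I_\ell$ at deep Borel images for good primes $\ell \geq 5$: a general open subgroup of $\GL_2(\mbz_\ell)$ can have conductor $\ell^n$ with $I_\ell$ as small as $(\ell+1)\ell$ even for $n \geq 3$, which would violate the bound. The elliptic-curve hypothesis rules this out, presumably via the crystalline/Fontaine--Laffaille constraint on the local representation at primes above $\ell$, but extracting an explicit numerical bound requires care. This is the step where the condition that $H$ arises from an elliptic curve, as opposed to being a general open subgroup of $\GL_2(\hat{\mbz})$, is essential.
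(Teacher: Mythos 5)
Your overall architecture (split the conductor into a ``radical'' part and a ``depth'' part, use the lifting lemma plus Dickson's classification at good primes, and N\'eron--Ogg--Shafarevich at bad ones) is close in spirit to the paper's, but there are two genuine gaps. The more serious one is your treatment of entanglement. You claim that the excess $e_\ell - n_\ell$ is absorbed by the entanglement index $[\prod_\ell H_\ell : H]$, which divides $I$. This is quantitatively false: for a Serre curve over $\mbq$ one has $H_\ell = \GL_2(\mbz_\ell)$ for every $\ell$ (so every $n_\ell = 0$) and entanglement index $2$, yet $e_\ell = 1$ for every odd prime $\ell$ dividing $\gD_E$, so the total excess is roughly $\rad(|\gD_E|)$, not bounded by $I = 2$. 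The excess at such primes must be absorbed by $\rad(D)$, and to do that you must \emph{prove} that any prime $\ell \geq 5$ with $H_\ell = \GL_2(\mbz_\ell)$ but $\ell \mid m_{E,K}$ divides $\gD_K N_{K/\mbq}(\gD_E)$ --- i.e., that a nontrivial common quotient of $\GL_2(\mbz_\ell)$ and $G_{(\ell)}$ forces the entanglement field into $K(\mu_{\ell^\infty})$ (via a Jordan--H\"older/occurrence argument and Goursat's lemma), whence ramification at $\ell$ and N\'eron--Ogg--Shafarevich apply. Your classification of $\ell \geq 5$, $\ell \nmid D$ as ``good'' simply assumes away exactly the case that needs this argument; nothing in your proposal supplies it.

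The second issue is that your self-identified ``main obstacle'' --- deep Borel images at good primes, which you propose to handle with crystalline/Fontaine--Laffaille input --- is not actually an obstacle, and the purported counterexample (conductor $\ell^n$, $n \geq 3$, with index only $(\ell+1)\ell$) does not exist. The lifting lemma shows that if the conductor of a closed subgroup of $\GL_2(\mbz_\ell)$ is exactly $\ell^n$ with $n \geq 2$ (for odd $\ell$), then the kernel of $\GL_2(\mbz/\ell^{\gamma+1}\mbz) \to \GL_2(\mbz/\ell^\gamma\mbz)$ fails to be contained in $G(\ell^{\gamma+1})$ for every $\gamma$ with $1 \leq \gamma \leq n-1$, so each extra level contributes a factor of $\ell$ to the index; hence $I_\ell \geq \ell^{n-1}[\GL_2(\mbz/\ell\mbz):G(\ell)] \geq \ell^{n-1}(\ell+1) > \ell^n$ in the Borel case. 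This purely group-theoretic bound (valid for arbitrary open subgroups, with a small correction at $\ell = 2$ and in one mod-$9$ configuration) is what the paper's first proposition establishes, and it removes any need for $p$-adic Hodge theory. You should replace the crystalline appeal with this inductive kernel argument and supply the missing Weil-pairing/N\'eron--Ogg--Shafarevich step for the radical.
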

\begin{remark}
The bound in Theorem \ref{maintheorem} both improves upon and generalizes a bound appearing in \cite{jones1} (see Corollary \ref{conditionalcorollary} below).  Furthermore, using results in \cite{daniels}, we may see that there are infinitely many\footnote{Specifically, \eqref{mEforsomeserrecurves} holds for any Serre curve $E$ with the property that $\gD_E$ is square-free and $\gD_E \not\equiv 1 \mod 4$.} elliptic curves $E$ over $\mbq$ satisfying 
\begin{equation} \label{mEforsomeserrecurves}
m_{E,\mbq} = 2 \cdot \left[ \GL_2(\hat{\mbz}) : \rho_{E,\mbq}(G_\mbq) \right] \cdot \rad(| \gD_E | ).
\end{equation}
Thus, our bound for $m_{E,K}$ is sharp when $K = \mbq$.
\end{remark}
\begin{remark}
Let 
\[
\begin{split}
\rho_{E,m} : G_K &\longrightarrow \GL_2(\mbz/m\mbz), \\
\rho_{E,\ell^\infty} : G_K &\longrightarrow \GL_2(\mbz_\ell)
\end{split}
\]
denote the Galois representations defined by letting $G_K$ act on $E[m]$ and on $\ds E[\ell^\infty] := \bigcup_{n \geq 1} E[\ell^n]$ respectively, and let $K(E[m]) = \ol{K}^{\ker \rho_{E,m}(G_K)}$ denote the $m$-th division field of $E$.
The conductor $m_{E,K}$ that we are considering should not be confused with ``Serre's constant,'' defined for an elliptic curve $E$ over $\mbq$ in \cite{danielsgonzalez} (see also \cite{cojocaru}) by
\[
A(E) := \prod_{{\begin{substack} { \ell^n \text{ a prime power} \\ \rho_{E,\ell^n}(G_\mbq) \neq \GL_2(\mbz/\ell^n\mbz) \\ \forall k < n, \, \rho_{E,\ell^{k}}(G_\mbq) = \GL_2(\mbz/\ell^{k}\mbz) } \end{substack}}} \ell^n.
\]
It is evident that $A(E)$ divides $m_{E,\mbq}$, but $m_{E,\mbq}$ is in general larger than $A(E)$.  The main differences between these two constants are as follows:
\begin{enumerate}
\item A prime power $\ell^n$ divides $m_{E,\mbq}$ whenever $\ker\left( \GL_2(\mbz_\ell) \rightarrow \GL_2(\mbz/\ell^{n-1}\mbz) \right) \not\subseteq \rho_{E,\ell^\infty}(G_\mbq)$, whereas $A(E)$ is square-free, except possibly at the primes $2$ and $3$.  In other words, for each prime $\ell$, $m_{E,\mbq}$ encodes the action of $G_\mbq$ on the entire $\ell$-adic Tate module, whereas, for $\ell \geq 5$, $A(E)$ only encodes the action of $G_\mbq$ on the $\ell$-torsion of $E$.
\item It may happen that there is a non-trivial intersection $\mbq \neq \mbq(E[m_1]) \cap \mbq(E[m_2])$ for some $m_1, m_2 \in \mbn$ with $\gcd(m_1,m_2) = 1$.  The constant $m_{E,\mbq}$ encodes such ``entanglements,'' whereas $A(E)$ does not.
\end{enumerate}
The general phenomenon of entanglements has come up in various recent papers, see for instance \cite{braujones}, which studies elliptic curves $E$ over $\mbq$ satisfying $\left[ \mbq(E[2]) : \mbq \right] = 6$ and $\mbq(E[2]) \subseteq \mbq(E[3])$, and also \cite{bourdon}, in which potential entanglements come up in an analysis of sporadic points on the modular curve $X_1(N)$.
\end{remark}

Given an elliptic curve $E$ defined over a number field $K$, computing the positive integer $m_{E,K}$ is a step toward understanding the image $\rho_{E,K}(G_K) \subseteq \GL_2(\hat{\mbz})$.  Following Serre's open image result, there has been much interest in the nature of $\rho_{E,K}(G_K)$, for instance regarding its mod $\ell$ reductions (see \cite{mazur}, \cite{merel}, \cite{lozanorobledo}, \cite{biluparent}, \cite{biluparentrebolledo}, and \cite{zywina2}) and also more recently its reductions at composite levels (see \cite{dokchitser}, \cite{sutherlandzywina}, \cite{danielsgonzalez} and \cite{morrow}).  In addition to this connection, Theorem \ref{maintheorem} also has analytic relevance; for instance in \cite{awstitchmarsh} it is applied to the study averages of constants appearing in various elliptic curve conjectures.

Serre's open image result \eqref{serresopenimage} implies that, for any $E/K$ without complex multiplication (CM), there exists a bound $C_{E,K} > 0$ so that, for each prime $\ell > C_{E,K}$, we have $\rho_{E,\ell}(G_K) = \GL_2(\mbz/\ell\mbz)$.
Serre asked whether the constant $C_{E,K}$ may be chosen uniformly in $E$, i.e. whether
\begin{equation} \label{serresquestioneqn}
\exists C_K > 0 \; \text{ so that, } \; \forall E/K \text{ without CM and } \forall \text{ prime } \ell > C_K, \; \rho_{E,\ell}(G_K) = \GL_2(\mbz/\ell\mbz).
\end{equation}
This question is still open, even in the case $K = \mbq$.  An affirmative answer to it would imply that
\[
[ \GL_2(\hat{\mbz}) : \rho_E(G_K) ] \ll_K 1,
\]
although the implied constant is ineffective, because of an appeal to Faltings' theorem (see \cite{zywinaindex}, which details this in the case $K = \mbq$).  Theorem \ref{maintheorem}  thus has the following corollary.
\begin{cor} \label{conditionalcorollary}
Assume that \eqref{serresquestioneqn} holds.  We then have
\[
m_{E,K} \ll_K \rad \left( N_{K/\mbq}(\gD_E) \right).
\]
\end{cor}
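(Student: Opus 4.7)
The plan is a direct combination of Theorem~\ref{maintheorem} with the uniformity of the index $[\GL_2(\hat{\mbz}) : \rho_{E,K}(G_K)]$ that follows from Serre's uniformity conjecture \eqref{serresquestioneqn}. The statement is really a ``bookkeeping'' corollary once those two ingredients are in hand, so I expect essentially no obstacle beyond citing the appropriate results.

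First I would simply invoke Theorem~\ref{maintheorem} to reduce the task to bounding each of the two factors
\[
[\GL_2(\hat{\mbz}) : \rho_{E,K}(G_K)] \quad\text{and}\quad \rad\bigl(|\gD_K N_{K/\mbq}(\gD_E)|\bigr)
\]
in terms of $K$ and $\rad(N_{K/\mbq}(\gD_E))$ alone. For the first factor, the parenthetical remark preceding the corollary already indicates the correct reference: assuming \eqref{serresquestioneqn}, one can combine Serre's open image theorem with Faltings' theorem (as carried out in \cite{zywinaindex} for $K = \mbq$, and in the obvious way for a general number field $K$) to conclude that $[\GL_2(\hat{\mbz}) : \rho_{E,K}(G_K)] \ll_K 1$, with an ineffective implied constant.

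For the second factor, I would use the evident multiplicativity of the radical to split
\[
\rad\bigl(|\gD_K N_{K/\mbq}(\gD_E)|\bigr) \leq \rad\bigl(|\gD_K|\bigr) \cdot \rad\bigl(N_{K/\mbq}(\gD_E)\bigr),
\]
and then observe that $\rad(|\gD_K|)$ is a constant depending only on $K$, so can be absorbed into the implicit constant in the notation $\ll_K$. Combining these two estimates with Theorem~\ref{maintheorem} yields
\[
m_{E,K} \leq 2 \cdot [\GL_2(\hat{\mbz}) : \rho_{E,K}(G_K)] \cdot \rad(|\gD_K|) \cdot \rad\bigl(N_{K/\mbq}(\gD_E)\bigr) \ll_K \rad\bigl(N_{K/\mbq}(\gD_E)\bigr),
\]
which is the desired conclusion.

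The only genuine input is the uniformity of the index; all remaining steps are trivial manipulations of the radical function. Since that input is attributed in the text to \cite{zywinaindex} (and ultimately rests on Faltings' theorem, which accounts for the ineffectivity of the implied constant), this proof is essentially a formal consequence of Theorem~\ref{maintheorem} together with that cited result.
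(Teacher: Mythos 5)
Your proposal is correct and follows exactly the argument the paper intends: Theorem \ref{maintheorem} combined with the uniform (ineffective) index bound $[\GL_2(\hat{\mbz}) : \rho_{E,K}(G_K)] \ll_K 1$ that \eqref{serresquestioneqn} yields via \cite{zywinaindex}, plus absorbing $\rad(|\gD_K|)$ into the implied constant. The paper gives no separate proof of the corollary beyond this observation, so there is nothing to add.
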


Theorem \ref{maintheorem} is proved via the following two propositions, the first of which deals generally with open subgroups $G \subseteq \GL_2(\hat{\mbz})$.
Because of group-theoretical differences present for the prime $2$ (see \cite{dokchitser} and \cite{elkies}, which concern the Galois representation on the $2$-adic (resp. on the $3$-adic) Tate module, illustrating these differences), it will be convenient to introduce the following modified radical:
\begin{equation} \label{defofradprime}
\rad'(m) := 
\begin{cases}
\rad(m) & \text{ if } 4 \nmid m \\
2\rad(m) & \text{ if } 4 \mid m.
\end{cases}
\end{equation}
We will also distinguish the following case involving the prime $3$, in whose statement $G_3$ (resp. $G(3))$ denotes the image of $G$ under the projection map $\GL_2(\hat{\mbz}) \longrightarrow \GL_2(\mbz_3)$ (resp. under $\GL_2(\hat{\mbz}) \longrightarrow \GL_2(\mbz/3\mbz)$).  The analysis proceeds a bit differently according to whether or not the condition
\begin{equation} \label{theconditionat3}
9 \mid m_G, \quad \SL_2(\mbz_3) \not \subseteq G_3 \quad \text{ and } \quad  G(3) = \GL_2(\mbz/3\mbz)
\end{equation}
holds.  
\begin{proposition} \label{mGprimeboundprop}
Let $G \subseteq \GL_2(\hat{\mbz})$ be an open subgroup and let $m_G$ be as in Definition \ref{definitionofme}.  We then have
\[
\frac{m_G}{\rad'(m_G)} \; \text{ divides } \; \left[ \pi^{-1}(G(\rad'(m_G))) : G(m_G) \right],
\]
where $\rad'(\cdot)$ is defined as in \eqref{defofradprime} and $\pi : \GL_2(\mbz/m_G\mbz) \longrightarrow \GL_2(\mbz/\rad'(m_G)\mbz)$ denotes the canonical projection map.  Assuming that \eqref{theconditionat3} holds, we have
\[
\frac{9m_G}{\rad'(m_G)} \; \text{ divides } \; \left[ \pi^{-1}(G(\rad'(m_G))) : G(m_G) \right].
\]
\end{proposition}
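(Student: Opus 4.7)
The plan is to reformulate the index, reduce prime-by-prime via the Chinese remainder theorem, and then analyze each prime via a filtration argument on congruence subgroups.

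First, since reduction $G(m_G) \twoheadrightarrow G(r)$ is surjective (with $r := \rad'(m_G)$), one has $\pi^{-1}(G(r)) = G(m_G) \cdot \bar{K}_r$ in $\GL_2(\mbz/m_G\mbz)$, where $\bar{K}_r := \ker(\GL_2(\mbz/m_G\mbz) \to \GL_2(\mbz/r\mbz))$; the second isomorphism theorem yields
\[
[\pi^{-1}(G(r)) : G(m_G)] = [\bar{K}_r : \bar{K}_r \cap G(m_G)].
\]
By CRT, $\bar{K}_r$ is the internal direct product of ``$\ell$-axis'' normal subgroups $N_\ell \cong \bar{K}_{r_\ell}^{\ell^{a_\ell}}$ (where $a_\ell := v_\ell(m_G)$, $r_\ell := v_\ell(r)$). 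Setting
\[
J_\ell := \{g \in \bar{K}_{r_\ell}^{\ell^{a_\ell}} \; : \; (I,\ldots,g,\ldots,I) \in G(m_G)\},
\]
a second application of the second isomorphism theorem shows $[\bar{K}_{r_\ell}^{\ell^{a_\ell}} : J_\ell] \mid [\bar{K}_r : \bar{K}_r \cap G(m_G)]$ for each $\ell$, and since these local indices are pairwise coprime $\ell$-powers, their product also divides the global index. The problem thus reduces to proving, for each $\ell \mid m_G$ with the relevant $a_\ell$, that $\ell^{a_\ell - 1}$ (respectively $2^{a_2 - 2}$ at $\ell = 2$) divides $[\bar{K}_{r_\ell}^{\ell^{a_\ell}} : J_\ell]$.

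At each prime $\ell$, consider the filtration $J_k := J_\ell \cap \bar{K}_{\ell^k}^{\ell^{a_\ell}}$. The successive quotients embed as $\mbf_\ell$-subspaces $J_k/J_{k+1} \hookrightarrow \bar{K}_{\ell^k}/\bar{K}_{\ell^{k+1}} \cong M_2(\mbf_\ell)$ of dimension $d_k$, so
\[
[\bar{K}_{r_\ell}^{\ell^{a_\ell}} : J_\ell] = \prod_k \ell^{4 - d_k}.
\]
The minimality of $m_G$ gives $\bar{K}_{m_G/\ell}^{m_G} \not\subseteq G(m_G)$, which localizes (via CRT) to $\bar{K}_{\ell^{a_\ell - 1}}^{\ell^{a_\ell}} \not\subseteq J_\ell$, hence $d_{a_\ell - 1} \leq 3$. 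The crucial structural input is the monotonicity $d_k \leq d_{k+1}$, which I would derive from the identity $(I + \ell^k M)^\ell \equiv I + \ell^{k+1} M \pmod{\ell^{k+2}}$ (valid for $k \geq 1$ when $\ell$ is odd, and for $k \geq 2$ when $\ell = 2$): under the identifications $[I + \ell^k M] \leftrightarrow M \bmod \ell$, the $\ell$-th power map induces the identity on $M_2(\mbf_\ell)$, and since $J_\ell$ is closed under $\ell$-th powers, $J_k/J_{k+1}$ embeds (as the same subspace) into $J_{k+1}/J_{k+2}$. Thus $d_k \leq 3$ for every relevant $k$, so each factor $\ell^{4 - d_k} \geq \ell$, and the product is at least $\ell^{a_\ell - 1}$ (respectively $2^{a_2 - 2}$).

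For $\ell = 2$ the filtration must start at $\bar{K}_4$ rather than $\bar{K}_2$, since $(I + 2M)^2 = I + 4(M + M^2)$ is not the identity on $\bar{K}_2/\bar{K}_4$; this is precisely what the factor of $2$ in $\rad'(m_G)$ records when $4 \mid m_G$. Under hypothesis \eqref{theconditionat3}, the extra factor $9 = 3^2$ at $\ell = 3$ arises from the exceptional family of open subgroups of $\GL_2(\mbz_3)$ classified by Elkies \cite{elkies} and Dokchitser \cite{dokchitser}: the conditions $G(3) = \GL_2(\mbz/3)$, $\SL_2(\mbz_3) \not\subseteq G_3$, and $9 \mid m_G$ place $G_3$ in this family, which imposes additional deficit in the filtration beyond the generic $d_k \leq 3$, contributing the extra factor $9$. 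The main obstacle of the proof is establishing the monotonicity $d_k \leq d_{k+1}$, which is delicate because $\bar{K}_\ell^{\ell^{a_\ell}}$ is non-abelian in general and the naive $\log$ abelianization fails at $\ell \in \{2,3\}$; this is resolved by the explicit $\ell$-th power computation above, which supplies the required Lie-theoretic structure on the graded pieces once the filtration starts at the correct depth. A secondary technical point is the invocation of the Elkies--Dokchitser classification to pin down the extra factor $9$ in the refined $\ell = 3$ case.
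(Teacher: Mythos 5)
Your proof of the first divisibility is correct and, at bottom, runs on the same engine as the paper's: the identity $(I+\ell^k M)^\ell \equiv I+\ell^{k+1}M \pmod{\ell^{k+2}}$, valid for $k \geq \ga_\ell$ with $\ga_\ell$ as in \eqref{defofalphasubell}, which is exactly the binomial computation \eqref{binomialexpansion} behind Lemma \ref{alphasubelllemma}. The packaging differs: the paper shows that for each $\gd$ with $\ga_\ell \le \gd < \ord_\ell(m_G)$ the layer kernel $\ker(\pi_{\ell^{\gd+1},\ell^{\gd}}) \times \{1\}$ fails to lie in the image (Corollary \ref{equalityofgbsubellandgbsubellprimecorollary}), transfers that non-containment across levels via the coprimality Lemma \ref{GiHiNilemma}, and collects one factor of $\ell$ per layer (Lemma \ref{primedividesindexlemma}); you instead work once and for all at level $m_G$, split $\ker\pi$ by CRT, and run a dimension count $d_{r_\ell}\le\cdots\le d_{a_\ell-1}\le 3$ on the graded pieces, with your monotonicity $D_k\subseteq D_{k+1}$ playing the role of the paper's induction and the second isomorphism theorem replacing Lemma \ref{GiHiNilemma}. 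That is a legitimate, self-contained alternative for the first statement, and the way you isolate why the filtration must start at $\bar K_4$ when $\ell=2$ correctly accounts for the definition of $\rad'$.

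The second statement (the extra factor $9$ under \eqref{theconditionat3}) is not proved. Saying that the Elkies--Dokchitser family ``imposes additional deficit in the filtration'' asserts precisely what must be shown. In your language you need $d_1\le 1$ at $\ell=3$, so that the bottom layer contributes $3^{3}$ rather than $3$; combined with $d_k\le 3$ for $k\ge 2$ this gives $3^{3+(a_3-2)}=9\cdot 3^{a_3-1}$, which is the claim. To get $d_1\le 1$ you need three things you do not supply: (i) that \eqref{theconditionat3} forces $\SL_2(\mbz/9\mbz)\not\subseteq G(9)$ --- this does not follow formally from $\SL_2(\mbz_3)\not\subseteq G_3$ together with $G(3)=\GL_2(\mbz/3\mbz)$, but requires the lifting statement of Lemma \ref{usefulSL2elladicversion} at $\ell=3$; (ii) the finite group-theoretic fact that every subgroup $H\subseteq\GL_2(\mbz/9\mbz)$ with $H(3)=\GL_2(\mbz/3\mbz)$ and $\SL_2(\mbz/9\mbz)\not\subseteq H$ has index divisible by $27$ --- the paper obtains this from a computer search showing there are exactly two such subgroups up to conjugacy, the larger of index $27$, and citing Elkies or Dokchitser--Dokchitser does not by itself yield this index divisibility; and (iii) a transfer of that bound, which concerns the full projection $G(9)$, to the $3$-axis subgroup $J_3\subseteq G(m_G)$ whose graded pieces you are counting (this works because $J_1$ maps into $\ker(\pi_{9,3})\cap G(9)$, whose order is $3^4$ divided by $[\GL_2(\mbz/9\mbz):G(9)]$ when $G(3)$ is full, but it has to be said). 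Without (i)--(iii) the factor of $9$ is unsupported.
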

In contrast with Proposition \ref{mGprimeboundprop}, our second proposition is specific to the situation where $G = \rho_{E,K}(G_K)$, making use of facts about the Weil pairing on an elliptic curve, together with the Ner\'{o}n-Ogg-Shafarevich criterion for ramification in division fields.
\begin{proposition} \label{radmGdoubleprimeboundprop}
Let $K$ be a number field and let $E$ be an elliptic curve defined over $K$ without complex multiplication.  Let $G := \rho_{E,K}(G_K)$ be the image of the Galois representation associated to $E$ and let $m_G$ be as in Definition \ref{definitionofme}.  Assuming that \eqref{theconditionat3} does not hold, we have
\[
%\begin{split}
\rad'(m_G) \leq  2 \left[ \GL_2(\mbz/\rad'(m_G)\mbz) : G(\rad'(m_G)) \right] \rad(|\gD_K N_{K/\mbq}(\gD_E)|).
%\end{split}
\]
If \eqref{theconditionat3} does hold, then
\[
%\begin{split}
\frac{\rad'(m_G)}{3} \leq  2 \left[ \GL_2(\mbz/\rad'(m_G)\mbz) : G(\rad'(m_G)) \right] \rad(|\gD_K N_{K/\mbq}(\gD_E)|).
%\end{split}
\]
\end{proposition}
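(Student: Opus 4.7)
The plan is to decompose $r := \rad'(m_G)$ as $r_1 \cdot r_2$ with $r_1 := \gcd(r, 2M)$ and $r_2 := r/r_1$, where $M := \rad(|\gD_K N_{K/\mbq}(\gD_E)|)$. Since $r_1 \leq 2M$ trivially, and since the projection $\GL_2(\mbz/r\mbz) \twoheadrightarrow \GL_2(\mbz/r_2\mbz)$ restricts to a surjection $G(r) \twoheadrightarrow G(r_2)$ and thus yields $[\GL_2(\mbz/r\mbz) : G(r)] \geq [\GL_2(\mbz/r_2\mbz) : G(r_2)]$, the desired bound will follow from $r_2 \leq [\GL_2(\mbz/r_2\mbz) : G(r_2)]$ (respectively $r_2/3 \leq \cdots$ in the case of \eqref{theconditionat3}). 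By Goursat's lemma (combined with the Chinese remainder decomposition $\GL_2(\mbz/r_2\mbz) \cong \prod_{\ell \mid r_2} \GL_2(\mbf_\ell)$), this index splits multiplicatively:
\[
[\GL_2(\mbz/r_2\mbz) : G(r_2)] \geq \prod_{\ell \mid r_2} [\GL_2(\mbf_\ell) : G(\ell)],
\]
reducing the task to showing $[\GL_2(\mbf_\ell) : G(\ell)] \geq \ell$ for each prime $\ell \mid r_2$ (with $\ell = 3$ excluded in the case of \eqref{theconditionat3}).

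The heart of the proof will be the claim that, assuming \eqref{theconditionat3} does not hold, every prime $\ell \mid r_2$ satisfies $G(\ell) \neq \GL_2(\mbf_\ell)$. I will argue this by contradiction. Suppose $G(\ell) = \GL_2(\mbf_\ell)$ and set $L := K(E[\ell]) \cap K(E[r_2/\ell])$, which is Galois over $K$. The Weil pairing gives $K(\zeta_\ell) \subseteq K(E[\ell])$, and an explicit computation of $(\GL_2(\mbf_\ell))^{\text{ab}}$ (equal to $\mbf_\ell^\times$ for $\ell \geq 5$ and to $\mbz/2\mbz$ for $\ell = 3$, in either case realized via the determinant) identifies $K(\zeta_\ell)$ as the maximal abelian subextension of $K(E[\ell])/K$. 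Since $\ell \nmid \gD_K$, the extension $K(\zeta_\ell)/K$ is totally ramified of degree $\ell - 1$ at each prime of $K$ above $\ell$, so any nontrivial subextension must ramify at such primes. On the other hand, the hypothesis $\ell \nmid M$ combined with the N\'eron-Ogg-Shafarevich criterion shows that $K(E[r_2/\ell])/K$ is unramified at primes of $K$ above $\ell$, so its subfield $L^{\text{ab}}/K$ is also unramified there. This forces $L^{\text{ab}} = K$, and a direct inspection of the normal-subgroup lattice of $\GL_2(\mbf_\ell)$ (valid for every $\ell \geq 3$) will show that its only perfect quotient is trivial; hence $L = K$, establishing linear disjointness of $K(E[\ell])$ and $K(E[r_2/\ell])$ over $K$.

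Linear disjointness implies that the divisibility $\ell \mid m_G$ cannot arise from any entanglement between $K(E[\ell])$ and $K(E[r_2/\ell])$, and so it must force $G_\ell \neq \GL_2(\mbz_\ell)$. For $\ell \geq 5$, Serre's lifting lemma ($G(\ell) = \GL_2(\mbf_\ell) \Rightarrow G_\ell = \GL_2(\mbz_\ell)$) delivers an immediate contradiction, ruling this case out altogether. For $\ell = 3$, the scenario $G(3) = \GL_2(\mbf_3)$ with $G_3 \neq \GL_2(\mbz_3)$ is precisely the exceptional configuration in which $\SL_2(\mbz_3) \not\subseteq G_3$ and $9 \mid m_G$, which is condition \eqref{theconditionat3}, again excluded by hypothesis. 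Having ruled out $G(\ell) = \GL_2(\mbf_\ell)$, Dickson's classification of the subgroups of $\SL_2(\mbf_\ell)$ then yields $[\GL_2(\mbf_\ell) : G(\ell)] \geq \ell + 1$ for $\ell \geq 5$ (realized by Borel subgroups) and $\geq 3$ for $\ell = 3$; multiplying over $\ell \mid r_2$ gives $[\GL_2(\mbz/r_2\mbz) : G(r_2)] \geq r_2$, and combining with $r_1 \leq 2M$ completes the proof of the first assertion. Under \eqref{theconditionat3}, the factor $\ell = 3$ is simply dropped from the product, yielding instead $r/3 \leq 2M \cdot [\GL_2(\mbz/r\mbz) : G(r)]$. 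I expect the main obstacle to be the linear-disjointness step, which requires carefully synthesizing the Weil pairing, N\'eron-Ogg-Shafarevich, the ramification structure of cyclotomic extensions, and the group theory of $\GL_2(\mbf_\ell)$, with particular attention to the failure of $\SL_2(\mbf_3)$ to be perfect.
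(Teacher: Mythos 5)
Your overall strategy for the odd primes --- for each $\ell \mid \rad'(m_G)$, either charge $\ell$ to $\rad(|\gD_K N_{K/\mbq}(\gD_E)|)$ or show the mod-$\ell$ index is at least $\ell$ --- is the same dichotomy the paper uses, and your idea of killing the entanglement by showing its Galois group is simultaneously perfect (no abelian part, by ramification above $\ell$) and a quotient of a $\GL_2$ is an attractive variant of the paper's occurrence/Jordan--H\"older analysis. But the key step has a genuine gap. The condition $\ell \mid m_G$ is equivalent to $\GL_2(\mbz_\ell) \times \{ 1_{(\ell)} \} \not\subseteq G$, so when $G_\ell = \GL_2(\mbz_\ell)$ it says exactly that $F := K(E[\ell^\infty]) \cap K(E_{\tors,(\ell)}) \neq K$. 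You only prove that $L := K(E[\ell]) \cap K(E[r_2/\ell])$ equals $K$, and $L$ is in general a proper subfield of $F$: the entanglement responsible for $\ell \mid m_G$ can occur at a higher power of $\ell$, or against $K(E[n])$ for $n$ divisible by primes of your $r_1$ (i.e., primes dividing $2\gD_K N_{K/\mbq}(\gD_E)$), none of which your $L$ sees. So ``it must force $G_\ell \neq \GL_2(\mbz_\ell)$'' does not follow. To repair this you must run the argument on $F$ itself: N\'eron--Ogg--Shafarevich does give that all of $K(E_{\tors,(\ell)})$ is unramified above $\ell$, and the maximal abelian subextension of $K(E[\ell^\infty])/K$ is $K(\mu_{\ell^\infty})$ (totally ramified above $\ell$ when $\ell \nmid \gD_K$), so $\gal(F/K)$ is a perfect closed quotient of $\GL_2(\mbz_\ell)$; but you then need the nontrivial fact that every such quotient is trivial --- this is essentially the paper's Lemma \ref{kernelat3lemma} at $\ell = 3$ (proved by machine computation plus the lifting lemma), and for $\ell \geq 5$ it requires the normal-subgroup structure of $\GL_2(\mbf_\ell)$ together with Lemma \ref{usefulSL2elladicversion}. (The paper instead analyzes which simple groups can occur in $G_{(\ell)}$, via Lemma \ref{PSL2Zmodplemma} and \eqref{occincomplementofell}, which forces a separate argument at $\ell = 5$ because $A_5 \simeq \PSL_2(\mbz/5\mbz)$ occurs in $\GL_2(\mbz_p)$ for other $p$; your perfect-quotient route, once lifted to $F$, would bypass that case.)

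Separately, the prime $2$ is not handled. If $4 \mid m_G$ and $2 \nmid \gD_K N_{K/\mbq}(\gD_E)$, then $2 \mid r_2$ in your decomposition, and your reduction to $\prod_{\ell \mid r_2} [\GL_2(\mbf_\ell) : G(\ell)]$ would require $G(2) \neq \GL_2(\mbz/2\mbz)$, which need not hold: the contribution at $2$ may come entirely from an entanglement at level $4$ or $8$, which the product of mod-$\ell$ indices discards, and your cyclotomic-ramification argument is vacuous at $\ell = 2$ since $[\mbq(\mu_2):\mbq] = 1$. The paper devotes Lemma \ref{ellequals2lemma} to exactly this point, bounding the index $[\pi^{-1}(G(r'_{(2)})) : G(r')]$ (which does detect the entanglement) and analyzing the four index-$2$ subgroups of $\GL_2(\mbz/8\mbz)$ together with the quadratic fields $K(\sqrt{\pm 2})$ and $K(\sqrt{\pm 2\gD_E})$; this is where the factor $2$ in the statement is actually spent, rather than being absorbed into $r_1 \leq 2M$ as in your plan.
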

Since the index of a subgroup is preserved under taking the full pre-image, we have that
\[
\left[ \GL_2(\mbz/\rad'(m_G)\mbz) : G(\rad'(m_G)) \right] = \left[ \GL_2(\mbz/m_G\mbz) : \pi^{-1}(G(\rad'(m_G))) \right],
\]
where $\pi : \GL_2(\mbz/m_G\mbz) \longrightarrow \GL_2(\mbz/\rad'(m_G)\mbz)$ is the canonical projection map. Thus, Theorem \ref{maintheorem} follows from Propositions \ref{mGprimeboundprop} and \ref{radmGdoubleprimeboundprop}.

Many of the ingredients that enter into the proof of Theorem \ref{maintheorem} may be verified for algebraic groups other than $\GL_2$.  For instance, using these same techniques, one should be able to obtain a similar bound for the analogous integer $m_{A,K}$ associated to an abelian variety $A$ defined over a number field $K$ whose Galois representation has open image inside $\GSp_{2g}(\hat{\mbz})$.

\subsection*{Acknowledgements} I would like to thank J. Mayle for thoughtful comments on a previous version and also the anonymous referee for carefully reading the manuscript and giving several helpful suggestions.

\section{Notation and preliminaries}

Throughout the paper, $p$ and $\ell$ will always denote prime numbers.  As usual, $\mbn$ denotes the set of natural numbers (excluding zero) and $\mbz$ denotes the set of integers.  We will occasionally use the abbreviations 
\[
\begin{split}
\mbn_{\geq \ga} &:= \{ n \in \mbn : n \geq \ga \}, \\
\mbz_{\geq \ga} &:= \{ n \in \mbz : n \geq \ga \}.
\end{split}
\]
We recall that 
\[
\hat{\mbz} := \lim_{\leftarrow} \mbz/m\mbz
\]
is the inverse limit of the rings $\mbz/m\mbz$ with respect to the canonical projection maps $\mbz/nm\mbz \rightarrow \mbz/m\mbz$.  Under the isomorphism of the Chinese Remainder Theorem, we have that
\begin{equation} \label{CRTisomorphism}
\hat{\mbz} \simeq \prod_{\ell} \mbz_\ell,
\end{equation}
where $\ds \mbz_\ell$ as usual denotes the ring of $\ell$-adic integers.  More generally, for any $m \in \mbn_{\geq 2}$ we define $\mbz_m$ and $\mbz_{(m)}$ to be the quotients of $\hat{\mbz}$ corresponding under \eqref{CRTisomorphism} to the following rings:
\[
\mbz_{m} \simeq \prod_{\ell \mid m} \mbz_\ell, \quad\quad \mbz_{(m)} \simeq \prod_{\ell \nmid m} \mbz_\ell.
\]
For any $m \in \mbn_{\geq 2}$ we have an isomorphism
\[
\hat{\mbz} \simeq \mbz_m \times \mbz_{(m)},
\]
and projection maps
\[
\begin{split}
&\hat{\mbz} \longrightarrow \mbz_m, \\
&\hat{\mbz} \longrightarrow \mbz_{(m)}.
\end{split}
\]
We note that these observations may also be applied to points in an algebraic group; in particular we have
\[
\GL_2(\hat{\mbz}) \simeq \GL_2(\mbz_m) \times \GL_2(\mbz_{(m)}) \simeq \prod_{\ell} \GL_2(\mbz_\ell)
\]
and we have projection maps
\begin{equation} \label{canonicalprojections}
\begin{split}
\pi_m : &\GL_2(\hat{\mbz}) \longrightarrow \GL_2(\mbz_m), \\
\pi_{(m)} : &\GL_2(\hat{\mbz}) \longrightarrow \GL_2(\mbz_{(m)}).
\end{split}
\end{equation}
In most cases we will denote any projection map simply by $\pi$, but on some occasions we will decorate it with subscripts, such as in \eqref{canonicalprojections} or 
\[
\begin{split}
&\pi_{m^\infty, m} : \GL_2(\mbz_m) \longrightarrow \GL_2(\mbz/m\mbz), \\
&\pi_{nm,n} : \GL_2(\mbz/nm\mbz) \longrightarrow \GL_2(\mbz/n\mbz).
\end{split}
\]
The ring $\hat{\mbz}$ is a topological ring under the profinite topology, and the group $\GL_2(\hat{\mbz})$ inherits the structure of a profininte group.  We recall that any open subgroup $G \subseteq \GL_2(\hat{\mbz})$ is a closed subgroup but not conversely.  In general, given any closed subgroup $G \subseteq \GL_2(\hat{\mbz})$, we denote by $G_m \subseteq \GL_2(\mbz_m)$ (resp. by $G_{(m)} \subseteq \GL_2(\mbz_{(m)})$) its image under $\pi_m$ (resp. under $\pi_{(m)}$) as in \eqref{canonicalprojections}.  We denote by $G(m)$ the image of $G$ under the canonical projection
\[
\GL_2(\hat{\mbz}) \longrightarrow \GL_2(\mbz/m\mbz).
\]
For any $m \in \mbn$ and any $d$ dividing $m$, we denote the prime-to-$d$ part of $m$  by
\[
m_{(d)} := \frac{m}{\prod_{\ell \mid d} \ell^{\ord_\ell(m)}}.
\]
Finally, we let 
\[
\begin{split}
\id_m &: \GL_2(\mbz_m) \longrightarrow \GL_2(\mbz_m), \\
\id_{(m)} &: \GL_2(\mbz_{(m)}) \longrightarrow \GL_2 (\mbz_{(m)})
\end{split}
\]
denote the identity maps, and we let $1_{m}$ (resp. $1_{(m)}$) denote the identity element of $\GL_2(\mbz_m)$ (resp. of $\GL_2(\mbz_{(m)})$).  We may also at times denote by $1_m$ the identity element of $\GL_2(\mbz/m\mbz)$.

For an abelian group $A$ and a positive integer $n$ we as usual denote by $A[n]$ the $n$-torsion subgroup of $A$.  For a prime number $\ell$ we define
\[
A[\ell^\infty] := \bigcup_{n = 0}^\infty A[\ell^n], \quad\quad A_{\tors} := \bigcup_{n = 1}^\infty A[n],  \quad\quad A_{\tors,(\ell)} := \bigcup_{{\begin{substack} { n = 1 \\ \ell \nmid n} \end{substack}}}^\infty A[n].
\]
Note that, if $A[n]$ is finite for each $n \in \mbn$, we have 
\[
A_{\tors} \simeq A[\ell^\infty] \times A_{\tors, (\ell)}.
\]

For a number field $K$, we denote by $\mc{O}_K$ its ring of integers, by $\gD_K$ its absolute discriminant and by 
\[
N_{K/\mbq} : K \longrightarrow \mbq
\]
the norm map.
A critical issue that arises in the proof of Proposition \ref{radmGdoubleprimeboundprop} is that of \emph{entanglement} of division fields, i.e. the possibility that the field extension $K \subseteq K(E[m_1]) \cap K(E[m_2])$ is a non-trivial extension, where $m_1$ and $m_2$ are relatively prime positive integers.  Putting $F := K(E[m_1]) \cap K(E[m_2])$, we have by Galois theory that
\[
\gal\left(K(E[m_1m_2]) / K\right) \simeq \left\{ (\gs_1, \gs_2 ) \in \gal\left(K(E[m_1])/K\right) \times  \gal\left(K(E[m_2])/K\right) : \gs_1 |_{F} = \gs_2 |_{F} \right\}.
\]
More generally, if $G_1$, $G_2$ and $H$ are groups and $\psi_1 : G_1 \longrightarrow H$, $\psi_2 : G_2 \longrightarrow H$ are surjective group homomorphisms, we introduce the following notation for the fibered product:
\[
G_1 \times_{\psi} G_2 := \{ (g_1, g_2) \in G_1 \times G_2 : \psi_1(g_1) = \psi_2(g_2) \}
\]
(here $\psi$ is an abbreviation for the ordered pair $(\psi_1,\psi_2)$).  Evidently, $K \neq K(E[m_1]) \cap K(E[m_2])$ if and only if the fibered product
\[
\gal\left(K(E[m_1])/K\right) \times_{\res} \gal\left(K(E[m_2])/K\right)
\]
is a fibered product over a non-trivial group, where 
\[
\res_i : \gal\left(K(E[m_i])/K\right) \longrightarrow \gal\left(K(E[m_1]) \cap K(E[m_2])/K\right)
\]
denotes the restriction map.

\section{Proof of Proposition \ref{mGprimeboundprop}}

In this section we prove Proposition \ref{mGprimeboundprop}, bounding $m_G/\rad'(m_G)$ in terms of the index of $G(m_G)$ in $\pi^{-1}\left(G(\rad'(m_G))\right)$, where $G \subseteq \GL_2(\hat{\mbz})$ is any open subgroup.  We recall that, in the profinite topology, any open subgroup of $\GL_2(\hat{\mbz})$ is necessarily closed; we will establish some lemmas regarding closed subgroups of $\GL_2(\hat{\mbz})$ which thus apply to the open subgroup $G$.

We begin by giving a more precise description of the local exponents $\gb_\ell \geq 0$ occurring in
\begin{equation} \label{defofgbsubell}
m_G =: \prod_{\ell} \ell^{\gb_\ell}.
\end{equation}
In what follows we use the maps
\begin{equation*} %\label{defofprojectionmaps}
\begin{split}
&\pi_{\ell^{\gb+1},\ell^{\gb}} \times \id_{(\ell)} : \GL_2(\mbz/\ell^{\gb+1}\mbz) \times \GL_2(\mbz_{(\ell)}) \longrightarrow \GL_2(\mbz/\ell^{\gb}\mbz) \times\GL_2(\mbz_{(\ell)}) \\
&\pi_{\ell^\infty,\ell^{\gb+1}} \times \id_{(\ell)} : \GL_2(\mbz_\ell) \times \GL_2(\mbz_{(\ell)}) \longrightarrow \GL_2(\mbz/\ell^{\gb+1}\mbz) \times \GL_2(\mbz_{(\ell)})
\end{split}
\end{equation*}
defined by the obvious projection in the first factor and the identity map in the second factor.
For any prime $\ell$, we define
\begin{equation} \label{defofalphasubell}
\ga_\ell := 
\begin{cases}
2 & \text{ if } \ell = 2 \\
1 & \text{ if } \ell \geq 3.
\end{cases}
\end{equation}
The next lemma follows from ideas in \cite[Lemma 3, IV-23]{serre2}.  In its statement and henceforth, we will interpret $\GL_2(\mbz/\ell^0\mbz) := \{ 1 \}$ as the trivial group, so that $\ker \pi_{\ell,1} = \GL_2(\mbz/\ell\mbz)$.
\begin{lemma} \label{alphasubelllemma}
Let $G \subseteq \GL_2(\hat{\mbz})$ be a closed subgroup, let $\ell$ be a prime number, and let $\gb \in \mbz_{\geq 0}$. Assume that
\[
\forall \gamma \in [\gb, \max \{ \gb, \ga_\ell \} ] \cap \mbz, \quad \ker( \pi_{\ell^{\gamma+1}, \ell^{\gamma}} ) \times \{ 1_{(\ell)} \} \subseteq (\pi_{\ell^\infty, \ell^{\gamma + 1}} \times \id_{(\ell)}) (G),
\]
where $\ga_\ell$ is as in \eqref{defofalphasubell}.  We then have
\begin{equation*} %\label{GL2kernelcontained}
\ker( \pi_{\ell^{\infty}, \ell^{\gb}} ) \times \{ 1_{(\ell)} \} \subseteq G.
\end{equation*}
\end{lemma}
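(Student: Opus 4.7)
The plan is to reformulate the statement as a question about a single closed subgroup of $\GL_2(\mbz_\ell)$ and then bootstrap through the $\ell$-adic filtration via an $\ell$-th power identity, in the spirit of \cite[Lemma 3, IV-23]{serre2}. First, I would set
\[
H := \{ g \in \GL_2(\mbz_\ell) : (g, 1_{(\ell)}) \in G \},
\]
which is a closed subgroup of $\GL_2(\mbz_\ell)$ since $G$ is closed in the product. Writing $K_\gamma := \ker \pi_{\ell^\infty, \ell^\gamma}$, the conclusion becomes $K_\gb \subseteq H$, and the hypothesis becomes
\[
\ker \pi_{\ell^{\gamma+1}, \ell^\gamma} \subseteq \pi_{\ell^\infty, \ell^{\gamma+1}}(H) \qquad \text{for every } \gamma \in [\gb, \max\{\gb, \ga_\ell\}] \cap \mbz.
\]
Because $H$ is closed, it suffices to prove $\pi_{\ell^\infty, \ell^\gamma}(K_\gb) \subseteq \pi_{\ell^\infty, \ell^\gamma}(H)$ for every integer $\gamma \geq \gb$, which I would establish by induction on $\gamma$; the base $\gamma = \gb$ is trivial.

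For the inductive step, set $\gamma_{\max} := \max\{\gb, \ga_\ell\}$. When $\gb \leq \gamma \leq \gamma_{\max}$, the hypothesis applies directly: given $g \in K_\gb$, the inductive hypothesis supplies $h_0 \in H$ with $h_0 \equiv g \pmod{\ell^\gamma}$, so the reduction modulo $\ell^{\gamma+1}$ of $g h_0^{-1} \in K_\gamma$ lies in $\ker \pi_{\ell^{\gamma+1}, \ell^\gamma} \subseteq \pi_{\ell^\infty, \ell^{\gamma+1}}(H)$, producing $h_1 \in H$ with $h_1 h_0 \equiv g \pmod{\ell^{\gamma+1}}$.

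When $\gamma > \gamma_{\max}$, the direct hypothesis is unavailable, and I would instead invoke the $\ell$-th power identity
\[
(I + \ell^{\gamma-1} N)^\ell \equiv I + \ell^\gamma N \pmod{\ell^{\gamma+1}},
\]
valid for every $N \in M_2(\mbz_\ell)$ provided $\gamma - 1 \geq \ga_\ell$, a condition guaranteed by $\gamma - 1 \geq \gamma_{\max} \geq \ga_\ell$. After producing $h_0 \in H$ as before, one writes $g h_0^{-1} \equiv I + \ell^\gamma M \pmod{\ell^{\gamma+1}}$ for some $M \in M_2(\mbz_\ell)$; since $\gamma - 1 \geq \gb$, the element $I + \ell^{\gamma-1} M$ lies in $K_{\gamma-1} \subseteq K_\gb$, so by the inductive hypothesis there is $h_1 \in H$ with $h_1 = I + \ell^{\gamma-1}(M + \ell N')$ for some $N' \in M_2(\mbz_\ell)$. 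Applying the $\ell$-th power identity to $h_1$ then yields $h_1^\ell \equiv I + \ell^\gamma M \pmod{\ell^{\gamma+1}}$, so that $h_1^\ell h_0 \in H$ agrees with $g$ modulo $\ell^{\gamma+1}$, completing the induction.

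The principal obstacle is verifying the $\ell$-th power identity in the required range, and it is exactly this computation that pins down the values $\ga_2 = 2$ and $\ga_\ell = 1$ for odd $\ell$. Expanding
\[
(I + \ell^{\gamma-1} N)^\ell = I + \ell^\gamma N + \sum_{k=2}^\ell \binom{\ell}{k} \ell^{k(\gamma-1)} N^k,
\]
every error term must have $\ell$-adic valuation at least $\gamma + 1$; for odd $\ell$, the extra factor of $\ell$ dividing $\binom{\ell}{k}$ (for $1 \leq k \leq \ell - 1$) makes $\gamma - 1 \geq 1$ suffice, whereas for $\ell = 2$ the term $\binom{2}{2} = 1$ forces $\gamma - 1 \geq 2$.
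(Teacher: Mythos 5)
Your proof is correct and follows essentially the same route as the paper's: both rest on the binomial computation $(I + \ell^{\gamma-1}N)^\ell \equiv I + \ell^{\gamma}N \pmod{\ell^{\gamma+1}}$ valid once $\gamma - 1 \geq \ga_\ell$, combined with an induction up the $\ell$-adic congruence filtration and the closedness of the subgroup. The only cosmetic difference is that you make the reduction to a single closed subgroup $H \subseteq \GL_2(\mbz_\ell)$ and the final passage-to-the-limit step explicit, whereas the paper inducts directly on the one-step containment $\ker(\pi_{\ell^{n+1},\ell^{n}}) \times \{1_{(\ell)}\} \subseteq (\pi_{\ell^\infty,\ell^{n+1}} \times \id_{(\ell)})(G)$ for $n \geq \max\{\gb,\ga_\ell\}$ and treats the closure argument tersely.
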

\begin{proof}
Since $G \subseteq \GL_2(\hat{\mbz})$ is closed, it suffices to prove that, for each $n \in \mbz_{\geq \max\{\gb,\ga_\ell\}}$, one has
\begin{equation} \label{kernelinductionhypothesis}
\ker (\pi_{\ell^{n+1},\ell^{n}}) \times \{ 1_{(\ell)} \} \subseteq (\pi_{\ell^\infty, \ell^{n+1}} \times \id_{(\ell)})(G).
\end{equation}
We prove this by induction on $n$ as follows (the base case $n = \max\{\gb,\ga_\ell\}$ is true by hypothesis).  First note that, for $n \geq 1$, we have
\begin{equation} \label{shapeofkernel}
\ker (\pi_{\ell^{n+1},\ell^{n}}) = \{  I + \ell^{n} \tilde{X}  \mod \ell^{n+1} : \tilde{X} \in M_{2\times 2}(\mbz_\ell) \}.
\end{equation}
Thus, \eqref{kernelinductionhypothesis} may be reformulated as saying
\begin{equation} \label{desiredconditioninn}
\forall X \in M_{2\times 2}(\mbf_\ell), \, \exists \tilde{X} \in M_{2\times 2}(\mbz_\ell) \text{ such that } \tilde{X} \equiv X \mod \ell \text{ and } g := (I + \ell^{n}\tilde{X}, 1_{(\ell)}) \in G.
\end{equation}
Our goal is to deduce that \eqref{desiredconditioninn} continues to hold when $n$ is replaced by $n+1$.  Since $G$ is a group, $g^\ell \in G$, and one sees by considering the binomial expansion 
\begin{equation} \label{binomialexpansion}
(I + \ell^{n}\tilde{X})^\ell =  I + \binom{\ell}{1} \ell^n \tilde{X} + \binom{\ell}{2} \ell^{2n} \tilde{X}^2 + \dots + \binom{\ell}{\ell-1} \ell^{(\ell-1)n} \tilde{X}^{\ell-1} + \ell^{\ell n} \tilde{X}^\ell
\end{equation}
that
\[
(\pi_{\ell^\infty,\ell^{n+2}} \times \id_{(\ell)})(g^\ell) = (I + \ell^{n+1}\tilde{X} \pmod{\ell^{n+2}}, 1_{(\ell)}).
\]
Since $X$ in \eqref{desiredconditioninn} was arbitrary, it follows by \eqref{shapeofkernel} that 
\begin{equation*}
\ker (\pi_{\ell^{n+2},\ell^{n+1}}) \times \{ 1_{(\ell)} \} \subseteq (\pi_{\ell^\infty, \ell^{n+2}} \times \id_{(\ell)})(G),
\end{equation*}
completing the induction and proving the lemma.  
\end{proof}
\begin{remark} \label{elladicversionremark}
The ``purely $\ell$-adic version'' of Lemma \ref{alphasubelllemma} also follows by the same proof (without the $\GL_2(\mbz_{(\ell)})$ factor).  Precisely, for any prime $\ell$ and closed subgroup $G \subseteq \GL_2(\mbz_\ell)$, and any $\gb \in \mbz_{\geq 0}$, one has
\begin{equation} \label{GL2elladicversion}
\begin{matrix}
\forall \gamma \in [\gb, \max \{ \gb,\ga_\ell \}] \cap \mbz, \\
\ker\left( \GL_2(\mbz/\ell^{\gamma+1}\mbz) \rightarrow \GL_2(\mbz/\ell^{\gamma}\mbz) \right) \subseteq G(\ell^{\gamma+1}) 
\end{matrix}
\; \Longrightarrow \; \ker\left( \GL_2(\mbz_\ell) \rightarrow \GL_2(\mbz/\ell^{\gb}\mbz) \right) \subseteq G,
\end{equation}
where $\ga_\ell$ is as in \eqref{defofalphasubell}. 
\end{remark}   
\begin{remark}
The fact that the exponent $\ga_\ell$ in \eqref{GL2elladicversion} is different for $\ell = 2$ and otherwise uniform for $\ell \geq 3$ stands in contrast with \cite[Lemma 3, IV-23]{serre2}, which breaks into cases according to whether $\ell \leq 3$ or $\ell \geq 5$.  The underlying reason is that we are seeking to conclude that $\ker\left( \GL_2(\mbz_\ell) \rightarrow \GL_2(\mbz/\ell^{\gb}\mbz) \right) \subseteq G$, rather than the weaker conclusion $\ker\left( \SL_2(\mbz_\ell) \rightarrow \SL_2(\mbz/\ell^{\gb}\mbz) \right) \subseteq G$, the latter breaking into cases according to the condition $
\left[ \SL_2(\mbz/\ell^{\gb}\mbz), \SL_2(\mbz/\ell^{\gb}\mbz) \right] = \SL_2(\mbz/\ell^{\gb}\mbz),
$
which happens if and only if $\ell \geq 5$;
see Lemma \ref{usefulSL2elladicversion} below.
\end{remark}
\begin{Definition} \label{defofgbsubellprime}
We define the
exponents $\gb_\ell' = \gb_\ell'(G)$ by
\begin{equation*} 
\gb_\ell' := \min \{ \gb \in \mbz_{\geq 0} : \, \forall \gamma \in [\gb, \max \{ \gb,\ga_\ell \}] \cap \mbz, \, \ker( \pi_{\ell^{\gamma+1}, \ell^{\gamma}} ) \times \{ 1_{(\ell)} \} \subseteq (\pi_{\ell^\infty, \ell^{\gamma + 1}} \times \id_{(\ell)}) (G) \},
\end{equation*}
where $\ga_\ell$ is as in \eqref{defofalphasubell}.
\end{Definition}
\begin{cor} \label{equalityofgbsubellandgbsubellprimecorollary}
We have $\gb_\ell = \gb_\ell'$, where $\gb_\ell$ is as in \eqref{defofgbsubell}.
\end{cor}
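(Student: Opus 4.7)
The plan is to prove $\gb_\ell = \gb_\ell'$ by establishing both inequalities, with Lemma \ref{alphasubelllemma} as the main tool. First, I would reinterpret $\gb_\ell$ as a purely local quantity. Via the Chinese Remainder Theorem decomposition of $\ker\left( \GL_2(\hat{\mbz}) \rightarrow \GL_2(\mbz/m\mbz) \right)$ into $\ell$-adic factors, and using that a closed subgroup $G \subseteq \GL_2(\hat{\mbz})$ contains a product of such kernels if and only if it contains each of the ``slices'' $\ker(\pi_{\ell^\infty, \ell^{\ord_\ell(m)}}) \times \{ 1_{(\ell)} \}$, one obtains the reinterpretation
\[
\gb_\ell = \min\{ \gb \in \mbz_{\geq 0} : \ker(\pi_{\ell^\infty, \ell^{\gb}}) \times \{ 1_{(\ell)} \} \subseteq G \}.
\]

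The inequality $\gb_\ell \leq \gb_\ell'$ is then immediate from Lemma \ref{alphasubelllemma}: the defining condition of $\gb_\ell'$ in Definition \ref{defofgbsubellprime} is precisely the hypothesis of that lemma, whose conclusion says $\ker(\pi_{\ell^\infty, \ell^{\gb_\ell'}}) \times \{ 1_{(\ell)} \} \subseteq G$, forcing $\gb_\ell \leq \gb_\ell'$ by minimality. For the reverse inequality $\gb_\ell' \leq \gb_\ell$, I would verify that $\gb = \gb_\ell$ itself satisfies the condition in Definition \ref{defofgbsubellprime}. Given any $\gamma \in [\gb_\ell, \max\{\gb_\ell,\ga_\ell\}] \cap \mbz$ (so in particular $\gamma \geq \gb_\ell$) and any $g \in \ker(\pi_{\ell^{\gamma+1}, \ell^{\gamma}})$, any lift $\tilde{g} \in \GL_2(\mbz_\ell)$ of $g$ satisfies $\tilde{g} \equiv I \pmod{\ell^{\gamma}}$, and since $\gamma \geq \gb_\ell$, we have $\tilde{g} \in \ker(\pi_{\ell^\infty, \ell^{\gb_\ell}})$. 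Thus $(\tilde{g}, 1_{(\ell)}) \in G$ and projects under $\pi_{\ell^\infty,\ell^{\gamma+1}} \times \id_{(\ell)}$ to $(g, 1_{(\ell)})$, as required.

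The entire argument is essentially bookkeeping with the definitions combined with a single direct appeal to Lemma \ref{alphasubelllemma}, so no substantive obstacle arises. The only point that deserves genuine care is the initial reinterpretation of $\gb_\ell$ via the CRT decomposition, where one uses closedness of $G$ to ensure that containment of each $\ell$-adic slice is equivalent to containment of the full product kernel, and hence to the defining condition on $m_G$.
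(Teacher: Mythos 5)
Your proof is correct and takes essentially the same route as the paper: both directions go through Lemma \ref{alphasubelllemma} together with the key observation that $\gb_\ell = \min\{\gb \in \mbz_{\geq 0} : \ker(\pi_{\ell^\infty,\ell^\gb}) \times \{1_{(\ell)}\} \subseteq G\}$, which the paper uses implicitly (``by definition of $\gb_\ell$'') while you make it explicit via the CRT decomposition and closedness of $G$. The only cosmetic difference is that the paper phrases the inequality $\gb_\ell' \leq \gb_\ell$ as a proof by contradiction, supposing $\gb_\ell < \gb_\ell'$ and projecting $\ker(\pi_{\ell^\infty,\ell^{\gb_\ell'-1}}) \times \{1_{(\ell)}\}$ to contradict minimality, whereas you verify directly that $\gb = \gb_\ell$ satisfies the condition in Definition \ref{defofgbsubellprime} --- these are the same projection argument written up in two logically equivalent ways.
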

\begin{proof}
By Lemma \ref{alphasubelllemma}, for each prime $\ell$ we have
\[
\ker( \pi_{\ell^{\infty}, \ell^{\gb_\ell'}} ) \times \{ 1_{(\ell)} \} \subseteq G.
\]
Since $\ker \left( \GL_2(\hat{\mbz}) \rightarrow \GL_2(\mbz/ \prod_\ell \ell^{\gb_\ell'}\mbz) \right)$ is equal to the subgroup of $\GL_2(\hat{\mbz})$ generated by $\ker( \pi_{\ell^{\infty}, \ell^{\gb_\ell'}} ) \times \{ 1_{(\ell)} \}$ as $\ell$ varies over all primes, we then have
\[
\ker \left( \GL_2(\hat{\mbz}) \rightarrow \GL_2(\mbz/ \prod_\ell \ell^{\gb_\ell'}\mbz) \right) \subseteq G.
\]
Thus, by \eqref{defofgbsubell} and Definition \ref{definitionofme}, we see that $\gb_\ell \leq \gb_\ell'$.

Conversely, suppose for the sake of contradiction that $\gb_\ell < \gb_\ell'$.  By definition of $\gb_\ell$, we would then have
\begin{equation} \label{definitingconditionforgbsubell}
\ker \left( \pi_{\ell^\infty,\ell^{\gb_\ell'-1}} \right) \times \{ 1_{(\ell)} \}  \subseteq \ker \left( \pi_{\ell^\infty,\ell^{\gb_\ell}} \right) \times \{ 1_{(\ell)} \} \subseteq G.
\end{equation}
Furthermore, since $\pi_{\ell^\infty,\ell^{\gb_\ell'}}\left( \ker ( \pi_{\ell^\infty,\ell^{\gb_\ell'-1}} ) \right) = \ker( \pi_{\ell^{\gb_\ell'},\ell^{\gb_\ell'-1}} )$, we then see that \eqref{definitingconditionforgbsubell} would then imply
\[
\forall \gamma \in [\gb_\ell'-1, \max \{ \gb_\ell'-1,\ga_\ell \}] \cap \mbz, \, \ker( \pi_{\ell^{\gamma+1}, \ell^{\gamma}} ) \times \{ 1_{(\ell)} \} \subseteq (\pi_{\ell^\infty, \ell^{\gamma + 1}} \times \id_{(\ell)}) (G),
\]
contradicting Definition \ref{defofgbsubellprime}.  Thus, $\gb_\ell' \leq \gb_\ell$.
\end{proof}
We will also find it useful to have sufficient conditions to conclude that $\SL_2(\mbz_\ell) \subseteq G$ where $G \subseteq \GL_2(\mbz_\ell)$ is an arbitrary closed subgroup.  The next lemma does so for $\ell$ odd, and gives us sufficient information to allow us to deal separately with the prime $\ell = 2$.  As with Lemma \ref{alphasubelllemma}, it can be largely deduced from arguments found in the proof of \cite[Lemma 3, IV-23]{serre2}; we include the details here for the sake of completeness.
\begin{lemma} \label{usefulSL2elladicversion}
Let $\ell$ be a prime number and let $G \subseteq \GL_2(\mbz_\ell)$ be a closed subgroup.  If $\ell \geq 5$, then we have
\begin{equation*} 
\SL_2(\mbz/\ell\mbz) \subseteq G(\ell) \; \Longrightarrow \; \SL_2(\mbz_\ell) \subseteq G.
\end{equation*}
If $\ell = 3$, we have
\[
G(3) = \GL_2(\mbz/3\mbz) \; \text{ and } \; \SL_2(\mbz/9\mbz) \subseteq G(9) \; \Longrightarrow \; \SL_2(\mbz_3) \subseteq G.
\]
Finally, if $\ell = 2$, we have
\[
G(4) = \GL_2(\mbz/4\mbz) \; \Longrightarrow \; G = \GL_2(\mbz_2) \; \text{ or } \; [\GL_2(\mbz/8\mbz) : G(8)] = 2.
\]
\end{lemma}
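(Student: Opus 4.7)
The plan is to use the filtration $K_n := \ker(\GL_2(\mbz_\ell) \to \GL_2(\mbz/\ell^n\mbz))$, identifying $K_n/K_{n+1} \cong M_{2\times 2}(\mbf_\ell)$ via $I + \ell^n X \mapsto X$, and to exploit the commutator identity
\[
[I + \ell^a X, I + \ell^b Y] \equiv I + \ell^{a+b}[X,Y] \pmod{\ell^{a+b+1}}
\]
(valid for $a, b \geq 1$), together with the natural $\SL_2$-analog of Remark \ref{elladicversionremark}, proved by the same argument as Lemma \ref{alphasubelllemma}. This analog reduces each implication in the lemma to a finite-level check: producing, for each $X$ in the relevant Lie algebra, a lift $I + \ell^n \tilde{X} \in G$ with $\tilde{X} \equiv X \pmod{\ell}$ at every level $n$ beyond a threshold depending on $\ell$.

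For $\ell \geq 5$, I would establish by induction that $\SL_2(\mbf_\ell) \subseteq G(\ell)$ implies $\SL_2(\mbz/\ell^n\mbz) \subseteq G(\ell^n)$ for all $n \geq 1$. The base step $n = 1 \to n = 2$ is handled by taking a lift $u \in G$ of the unipotent $\begin{pmatrix} 1 & 1 \\ 0 & 1 \end{pmatrix}$ and computing $u^\ell$ via \eqref{binomialexpansion}: the dominant contribution modulo $\ell^2$ produces a non-trivial element of $(K_1 \cap \SL_2(\mbz_\ell))/K_2$, and conjugating by lifts of $\SL_2(\mbf_\ell)$ sweeps out the entire $\SL_2(\mbf_\ell)$-orbit, which by irreducibility of the adjoint representation is all of $\mathfrak{sl}_2(\mbf_\ell)$. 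Inductive steps above level $\ell^2$ then follow from the commutator identity together with the perfectness of $\mathfrak{sl}_2(\mbf_\ell)$ (each standard generator is a bracket of others since $2 \in \mbf_\ell^\times$). The case $\ell = 3$ proceeds identically from level $9$ upward, as $\mathfrak{sl}_2(\mbf_3)$ remains perfect ($2 \in \mbf_3^\times$), so only the base lift from level $3$ to level $9$ is obstructed; this is why $\SL_2(\mbz/9\mbz) \subseteq G(9)$ is imposed outright. The obstruction is tied to the small size of $\mbf_3$ and the divisibility of intermediate terms in \eqref{binomialexpansion} --- the $\ell$-th power trick at $\ell = 3$ does not provide enough freedom to reach all of $\mathfrak{sl}_2(\mbf_3)$ modulo $9$ without additional hypotheses.

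The main obstacle is the case $\ell = 2$, where $\mathfrak{sl}_2(\mbf_2)$ is visibly not perfect: the brackets of diagonal and off-diagonal standard generators vanish in characteristic $2$, leaving $[\mathfrak{sl}_2(\mbf_2), \mathfrak{sl}_2(\mbf_2)] = \mbf_2 \cdot I$. Consequently the commutator lifting at each level recovers at most $[\mathfrak{gl}_2(\mbf_2), \mathfrak{gl}_2(\mbf_2)] = \mathfrak{sl}_2(\mbf_2) \subset \mathfrak{gl}_2(\mbf_2)$, missing a one-dimensional $\GL_2(\mbf_2)$-coinvariant quotient. The dichotomy in the conclusion arises from the existence of a canonical index-$2$ subgroup $H \subset \GL_2(\mbz/8\mbz)$ satisfying $H(4) = \GL_2(\mbz/4\mbz)$ and corresponding to this coinvariant. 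The plan is to show, using $G(4) = \GL_2(\mbz/4\mbz)$ together with squaring and the commutator identity, that $G(8)$ is either all of $\GL_2(\mbz/8\mbz)$ or contained in $H$; in the former case, one iterates using the commutator identity (which above level $8$ has enough room to no longer feel the obstruction) to deduce $G = \GL_2(\mbz_2)$. The delicate point will be identifying $H$ explicitly and verifying its uniqueness via the $S_3 \cong \GL_2(\mbf_2)$ representation theory.
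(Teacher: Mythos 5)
Your architecture for odd $\ell$ --- seed the level-one layer with an $\ell$-th power of a unipotent lift, propagate upward with the commutator identity and perfectness of $\mathfrak{sl}_2(\mbf_\ell)$, then pass to the limit by closedness --- only ever controls the graded pieces $\ker\left(\SL_2(\mbz/\ell^{n+1}\mbz) \rightarrow \SL_2(\mbz/\ell^{n}\mbz)\right)$, and that is not enough to close your induction ``$\SL_2(\mbz/\ell^n\mbz) \subseteq G(\ell^n)$ for all $n$.'' The gap is at level zero: to pass from $\SL_2(\mbz/\ell^n\mbz) \subseteq G(\ell^n)$ to $\SL_2(\mbz/\ell^{n+1}\mbz) \subseteq G(\ell^{n+1})$ you must lift each $s \in \SL_2(\mbz/\ell^n\mbz)$ to an element of $G(\ell^{n+1})$ of determinant exactly $1$; a lift supplied by surjectivity of $G(\ell^{n+1}) \rightarrow G(\ell^n)$ only has determinant $\equiv 1 \bmod \ell^n$, so the discrepancy $sg^{-1}$ lands in the full $\GL_2$-layer rather than in the $\SL_2$-layer you control. (The same issue infects your seed: $u^\ell$ has determinant $(\det u)^\ell$, not $1$.) This is exactly where the paper's proof differs: it proves $[G(\ell^n),G(\ell^n)] = \SL_2(\mbz/\ell^n\mbz)$ by induction, so that every element of $\SL_2(\mbz/\ell^n\mbz)$ is realized by a product of commutators --- automatically of determinant one --- and then concludes $\SL_2(\mbz_\ell) = [G,G] \subseteq G$ because $[G,G]$ is closed. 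The base case of that induction is the \emph{group-level} perfectness $[\SL_2(\mbf_\ell),\SL_2(\mbf_\ell)] = \SL_2(\mbf_\ell)$ for $\ell \geq 5$ (simplicity of $\PSL_2(\mbf_\ell)$ plus non-splitting of $\SL_2 \rightarrow \PSL_2$), and $[\GL_2(\mbf_3),\GL_2(\mbf_3)] = \SL_2(\mbf_3)$ for $\ell = 3$; Lie-algebra perfectness of $\mathfrak{sl}_2(\mbf_\ell)$ is not a substitute. This also explains why the hypothesis at $\ell = 3$ is $G(3) = \GL_2(\mbz/3\mbz)$ rather than $\SL_2(\mbz/3\mbz) \subseteq G(3)$: the group $\SL_2(\mbf_3)$ is solvable, so the commutator device needs all of $\GL_2(\mbf_3)$. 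Your binomial-expansion discussion accounts only for why $\SL_2(\mbz/9\mbz) \subseteq G(9)$ is additionally assumed.

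There are two further problems at $\ell = 2$. First, there is no canonical index-two $H \subseteq \GL_2(\mbz/8\mbz)$: as the paper records in the proof of Lemma \ref{ellequals2lemma}, there are exactly four index-two subgroups surjecting onto $\GL_2(\mbz/4\mbz)$, so the uniqueness you intend to verify is false (harmless for the statement, which asserts only the index, but fatal to your argument as designed). Second, the claim that the commutator identity ``above level $8$ no longer feels the obstruction'' is wrong: $[\mathfrak{gl}_2(\mbf_2),\mathfrak{gl}_2(\mbf_2)] = \mathfrak{sl}_2(\mbf_2)$ at every level, so commutators never recover the trace direction. What does work above level $4$ is squaring, $(I+2^n\tilde X)^2 \equiv I + 2^{n+1}\tilde X \bmod 2^{n+2}$ for $n \geq 2$, which yields the \emph{full} layer; this, via \eqref{GL2elladicversion} with $\gb=0$, is how the paper deduces $G = \GL_2(\mbz_2)$ once $G(8) = \GL_2(\mbz/8\mbz)$.
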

\begin{proof}
We first first assume $\ell$ is odd.  Under the stated hypotheses, we will show that $\SL_2(\mbz_\ell) \subseteq G$ by establishing that
\begin{equation} \label{SL2insidecommutators}
\SL_2(\mbz_\ell) = [G,G],
\end{equation}
which amounts to showing that $\SL_2(\mbz_\ell) \subseteq [G,G]$,
since the reverse inclusion follows from the fact that every commutator has determinant $1$.  We begin by first showing, by induction on $n$, that 
\begin{equation} \label{SL2kernelcontainedin}
\ker\left( \SL_2(\mbz/\ell^{n+1}\mbz) \rightarrow \SL_2(\mbz/\ell^{n}\mbz) \right) \subseteq G(\ell^{n+1}) \quad 
\left( \begin{matrix} \ell \geq 5 \text{ and } n \geq 0, \text{ or} \\ \ell = 3 \text{ and } n \geq 1 \end{matrix} \right).
\end{equation}
The binomial expansion argument \eqref{binomialexpansion} of Lemma \ref{alphasubelllemma} shows this, except for the case $\ell \geq 5$ and $n = 0$.  To establish this final case, we first observe that
\[
\det \left( I + \ell^n \tilde{X} \right) \equiv 1 + \ell^n \tr \tilde{X} \mod \ell^{n+1} \quad\quad (n \geq 1).
\]
Thus, for $n \geq 1$, we have
\begin{equation} \label{shapeofSL2kernel}
\ker \left( \SL_2(\mbz/\ell^{n+1}\mbz) \rightarrow \SL_2(\mbz/\ell^n\mbz) \right) = \{  I + \ell^{n} \tilde{X}  \mod \ell^{n+1} : \tilde{X} \in M_{2\times 2}^{\tr \equiv 0}(\mbz_\ell) \},
\end{equation}
where $M_{2\times 2}^{\tr \equiv 0}(\mbz_\ell) := \{ \tilde{X} \in M_{2\times 2}(\mbz_\ell) : \tr \tilde{X} \equiv 0 \mod \ell \}$.  In particular, $\ker \left( \SL_2(\mbz/\ell^{n+1}\mbz) \rightarrow \SL_2(\mbz/\ell^n\mbz) \right)$ is a 3-dimensional subspace of the 4-dimensional $\mbz/\ell\mbz$-vector space $\ker (\pi_{\ell^{n+1},\ell^n})$.  It follows from this, together with the fact that $\begin{pmatrix} 1 & 1 \\ 0 & 1 \end{pmatrix}$ and $\begin{pmatrix} 1 & 0 \\ 1 & 1 \end{pmatrix}$ reduced modulo $\ell$ generate $\SL_2(\mbz/\ell\mbz)$, that the set 
\[
\mc{K} := \left\{ \begin{pmatrix} 0 & 1 \\ 0 & 0 \end{pmatrix}, \begin{pmatrix} 0 & 0 \\ 1 & 0 \end{pmatrix}, \begin{pmatrix} 1 & 1 \\ -1 & -1 \end{pmatrix} \right\} \subseteq M_{2\times 2}(\mbz)
\]
satisfies
\begin{equation} \label{mcKgenerateskernel}
\left\langle I + \ell^n \mc{K} \mod \ell^{n+1} \right\rangle = \ker\left( \SL_2(\mbz/\ell^{n+1}\mbz) \rightarrow \SL_2(\mbz/\ell^n\mbz) \right) \quad\quad
\left( n \geq 0 \right).
\end{equation}
Fix $X \in \mc{K}$.  Note that $I + \ell^0X \mod \ell \in \SL_2(\mbz/\ell\mbz)$, which by hypothesis is contained in $G(\ell)$.  Fix a lift $\tilde{X} \in M_{2\times 2}(\mbz_\ell)$ for which $I + \ell^0\tilde{X} \in G$, and note that $\tilde{X}^2 \equiv {\boldsymbol{0}} \mod \ell$, so $\tilde{X}^4 \equiv {\boldsymbol{0}} \mod \ell^2$.  Thus, since $\ell \geq 5$, we have
\[
%\begin{split}
(I + \ell^{0}\tilde{X})^\ell = I + \binom{\ell}{1} \tilde{X} + \binom{\ell}{2} \tilde{X}^2 + \dots + \binom{\ell}{\ell-1} \tilde{X}^{\ell-1} + \tilde{X}^\ell \equiv I + \ell \tilde{X} \mod \ell^{2}, 
\]
and more generally,
\[
(I + \ell^{n}\tilde{X})^\ell \equiv I + \ell^{n+1} \tilde{X} \mod \ell^{n+2} \quad
\left( \begin{matrix} \ell \geq 5 \text{ and } n \geq 0, \text{ or} \\ \ell = 3 \text{ and } n \geq 1 \end{matrix} \right).
%\end{split}
\]
Therefore \eqref{SL2kernelcontainedin} is established by induction on $n$.

We now proceed to verify \eqref{SL2insidecommutators} for $\ell$ an odd prime.  When $\ell \geq 5$, the group $\PSL_2(\mbz/\ell\mbz)$ is a non-abelian simple group (see e.g. \cite[Ch. II, Hauptsatz 6.13]{huppert}), and the exact sequence
\[
1 \longrightarrow \{ \pm I \} \longrightarrow \SL_2(\mbz/\ell\mbz) \longrightarrow \PSL_2(\mbz/\ell\mbz) \longrightarrow 1
\]
does not split (see e.g. \cite[Lemma 2.3]{zywina}).  From this and a computer computation for the prime $\ell = 3$, we then find that
\begin{equation*}
\begin{split}
\ell \geq 5 \; &\Longrightarrow \; [\SL_2(\mbz/\ell\mbz), \SL_2(\mbz/\ell\mbz)] = \SL_2(\mbz/\ell\mbz), \\
\ell = 3 \; &\Longrightarrow \; [\GL_2(\mbz/3\mbz),\GL_2(\mbz/3\mbz)] = \SL_2(\mbz/3\mbz),
\end{split}
\end{equation*}
and so by the hypotheses of our lemma in this case, we have $[G(\ell),G(\ell)] = \SL_2(\mbz/\ell\mbz)$.
Note further that the commutator subgroup $[G,G] \subseteq G$ projects modulo $\ell$ onto the commutator subgroup $[G(\ell),G(\ell)] $.  We will prove by induction on $n \in \mbn$ that 
\begin{equation} \label{commutatorcontainmentwewant}
[G(\ell^n), G(\ell^n)] = \SL_2(\mbz/\ell^n\mbz) \quad\quad (n \geq 1),
\end{equation}
having just established the base case.  Fix $n \geq 1$ and assume that \eqref{commutatorcontainmentwewant} holds.  Pick any $g \in G(\ell^{n+1})$ and $\tilde{X} \in M^{\tr \equiv 0}_{2\times 2}(\mbz_\ell)$, so that, by \eqref{SL2kernelcontainedin} and \eqref{shapeofSL2kernel}, we have $I + \ell^n\tilde{X} \mod \ell^{n+1} \in G(\ell^{n+1})$.  We then compute the commutator
\begin{equation} \label{shapeofcommutator}
\begin{split}
g (I + \ell^n \tilde{X}) g^{-1} (I + \ell^n \tilde{X})^{-1} 
&\equiv g (I + \ell^n \tilde{X}) g^{-1} (I - \ell^n \tilde{X}) \\
&\equiv I + \ell^n ( g \tilde{X} g^{-1} - \tilde{X} ) \mod \ell^{n+1}.
\end{split}
\end{equation}
Consider the following computations in $M_{2\times 2}(\mbz/\ell\mbz)$:
\[
\begin{split}
\begin{pmatrix} 1 & 0 \\ 1 & 1 \end{pmatrix} \begin{pmatrix} 0 & 1 \\ 0 & 0 \end{pmatrix} \begin{pmatrix} 1 & 0 \\ 1 & 1 \end{pmatrix}^{-1} &= \begin{pmatrix} -1 & 1 \\ -1 & 1 \end{pmatrix}, \\
\begin{pmatrix} 1 & 1 \\ 0 & 1 \end{pmatrix} \begin{pmatrix} 0 & 0 \\ 1 & 0 \end{pmatrix} \begin{pmatrix} 1 & 1 \\ 0 & 1 \end{pmatrix}^{-1} &= \begin{pmatrix} 1 & -1 \\ 1 & -1 \end{pmatrix}, \\
\begin{pmatrix} 0 & -1 \\ 1 & 0 \end{pmatrix} \begin{pmatrix} 1 & 0 \\ 0 & -1 \end{pmatrix} \begin{pmatrix} 0 & -1 \\ 1 & 0 \end{pmatrix}^{-1} &= \begin{pmatrix} -1 & 0 \\ 0 & 1 \end{pmatrix}. 
\end{split}
\]
It follows that, inside the additive 3-dimensional $\mbz/\ell\mbz$-vector space
\[
M_{2\times 2}^{\tr = 0} (\mbz/\ell\mbz) := \{ X \in M_{2\times 2}(\mbz/\ell\mbz) : \tr X = 0 \},
\]
we have
\begin{equation*} %\label{thickcommutators}
%\begin{split}
\ell \geq 3 \; \Longrightarrow \; \left\langle \{ g X g^{-1} - X : g \in \SL_2(\mbz/\ell\mbz), X \in M_{2\times 2}^{\tr = 0} (\mbz/\ell\mbz) \} \right\rangle = M_{2\times 2}^{\tr = 0} (\mbz/\ell\mbz).
%\ell = 3 \; &\Longrightarrow \; \left\langle \{ g X g^{-1} - X : g \in \GL_2(\mbz/3\mbz), X \in M_{2\times 2}^{\tr = 0} (\mbz/3\mbz) \} \right\rangle = M_{2\times 2}^{\tr = 0} (\mbz/3\mbz).
%\end{split}
\end{equation*}
Thus, varying $g$ and $\tilde{X}$ in \eqref{shapeofcommutator}, we see that
\[
\ker \left( \SL_2(\mbz/\ell^{n+1}\mbz) \rightarrow \SL_2(\mbz/\ell^n\mbz) \right) \subseteq [G(\ell^{n+1}),G(\ell^{n+1}) ],
\]
verifying that \eqref{commutatorcontainmentwewant} holds with $n$ replaced by $n+1$, thus completing the induction step.  Since $[G,G] \subseteq \SL_2(\mbz_\ell)$ is a closed subgroup, we have therefore verified \eqref{SL2insidecommutators}, proving Lemma \ref{usefulSL2elladicversion} in case $\ell$ is odd.

Now assume $\ell = 2$ and note that \eqref{mcKgenerateskernel} is still valid.  By the hypothesis that $G(4) = \GL_2(\mbz/4\mbz)$, for each $X \in \mc{K}$ we may find a lift $\tilde{X} \in M_{2\times 2}(\mbz_2)$ for which $\tilde{X} \equiv X \mod 2$ and $I + 2\tilde{X} \in G$.  Again computing
\[
(I + 2\tilde{X})^2 = I + 4\tilde{X} + 4\tilde{X}^2 \equiv I + 4\tilde{X} \mod 8,
\]
we see that $\ker\left( \SL_2(\mbz/8\mbz) \rightarrow \SL_2(\mbz/4\mbz) \right) \subseteq G(8)$, and it follows that $[\GL_2(\mbz/8\mbz) : G(8) ] \leq 2$.  Finally, if $G(8) = \GL_2(\mbz/8\mbz)$, then \eqref{GL2elladicversion} with $\gb = 0$ implies that $G = \GL_2(\mbz_2)$.
\end{proof}

Next we will employ the following group theoretical lemma.
\begin{lemma} \label{GiHiNilemma}
Let $G_1$ and $G_2$ be finite groups and let $\pi : G_1 \rightarrow G_2$ be a surjective group homomorphism.  Let $H_1 \subseteq G_1$ and $H_2 \subseteq G_2$ be subgroups satisfying $\pi(H_1) = H_2$ and let $N_1 \unlhd G_1$ and $N_2 \unlhd G_2$ be normal subgroups satisfying $\pi(N_1) = N_2$.  Assume that
\begin{equation} \label{keyassumptionsonN1andkerpi}
\gcd(\#N_1, \# \ker \pi) = 1 \quad \text{ and } \quad \left[ N_1, \ker \pi \right] = \{ 1 \}.
\end{equation}
We then have
\[
N_1 \subseteq H_1 \; \Longleftrightarrow \; N_2 \subseteq H_2.
\]
\end{lemma}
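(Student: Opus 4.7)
The forward implication $N_1 \subseteq H_1 \Rightarrow N_2 \subseteq H_2$ is immediate by applying $\pi$ and using $\pi(N_1) = N_2$ together with $\pi(H_1) = H_2$. So the heart of the matter is the reverse implication, and the plan is to deduce $N_1 \subseteq H_1$ from $N_2 \subseteq H_2$.

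Set $K := \ker \pi$, and my plan is to prove $N_1 \subseteq H_1$ by showing the equivalent statement $[N_1 H_1 : H_1] = 1$ via a coprimality-of-indices argument. Note first that $N_1 H_1$ and $H_1 K$ are subgroups of $G_1$, since $N_1$ and $K$ are each normal. The key step will be to establish the sandwich $H_1 \subseteq N_1 H_1 \subseteq H_1 K$: the left containment is obvious, and for the right one I would apply $\pi$, obtaining $\pi(N_1 H_1) = N_2 H_2 = H_2$ thanks to the hypothesis $N_2 \subseteq H_2$. Consequently $N_1 H_1 \subseteq \pi^{-1}(H_2) = H_1 K$, the last equality because $\pi(H_1) = H_2$ implies every fibre of $\pi$ over an element of $H_2$ is of the form $h K$ with $h \in H_1$.

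From this sandwich together with Lagrange's theorem, $[N_1 H_1 : H_1]$ divides $[H_1 K : H_1] = \#K / \#(K \cap H_1)$, and hence divides $\#K$. On the other hand, the second isomorphism theorem gives $[N_1 H_1 : H_1] = \#N_1 / \#(N_1 \cap H_1)$, which divides $\#N_1$. The coprimality hypothesis $\gcd(\#N_1, \#K) = 1$ then forces $[N_1 H_1 : H_1] = 1$, i.e., $N_1 \subseteq H_1$, as desired.

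I do not anticipate any real obstacle here; the only mildly nontrivial step is spotting the intermediate subgroup $N_1 H_1$ and the chain relating it to $H_1$ and $H_1 K$. Incidentally, this approach does not invoke the second hypothesis $[N_1, \ker \pi] = \{1\}$ at all, and in fact that hypothesis appears to be redundant: since both $N_1$ and $K$ are normal in $G_1$, one has $[N_1, K] \subseteq N_1 \cap K$, and the coprimality $\gcd(\#N_1, \#K) = 1$ together with Lagrange forces $N_1 \cap K = \{1\}$, so $[N_1, K] = \{1\}$ is automatic.
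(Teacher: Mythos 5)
Your proof is correct, and it takes a genuinely different route from the paper's. The paper argues element by element: given $n_1 \in N_1$ with $\pi(n_1) \in N_2 \subseteq H_2 = \pi(H_1)$, it finds $k \in \ker\pi$ with $n_1 k \in H_1$, then uses the commutation hypothesis $[N_1,\ker\pi]=\{1\}$ to compute $(n_1k)^{\#\ker\pi} = n_1^{\#\ker\pi} \in H_1$, and finally uses $\gcd(\#N_1,\#\ker\pi)=1$ to recover $n_1$ as a power of $n_1^{\#\ker\pi}$. Your argument instead sandwiches $H_1 \subseteq N_1H_1 \subseteq H_1\ker\pi$ and kills the index $[N_1H_1:H_1]$ by showing it divides both $\#N_1$ and $\#\ker\pi$; this is cleaner and makes the structure of the hypothesis more transparent. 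Your closing observation is also right: with $N_1$ and $\ker\pi$ both normal, each commutator $[n,k]$ lies in $N_1 \cap \ker\pi$, which is trivial by coprimality, so $[N_1,\ker\pi]=\{1\}$ is indeed automatic. The only trade-off worth noting is that your route genuinely uses the normality of $N_1$ (to know $N_1H_1$ is a subgroup and to derive the redundancy), whereas the paper's element-wise computation would survive with the two displayed hypotheses even if $N_1$ were not assumed normal; as the lemma is stated, both proofs are complete.
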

\begin{proof}
The implication $\Longrightarrow$ is immediate and does not require \eqref{keyassumptionsonN1andkerpi}.  For the converse, suppose that $N_2 \subseteq H_2$ and let $n_1 \in N_1$.  Since $\pi(N_1) = N_2 \subseteq H_2 = \pi(H_1)$, we see that there exists $h_1 \in H_1$ satisfying $\pi(n_1) = \pi(h_1)$, and we may thus find $k \in \ker \pi$ so that $n_1k \in H_1$.  Now by \eqref{keyassumptionsonN1andkerpi}, we see that
\[
(n_1k)^{\# \ker \pi} = n_1^{\# \ker \pi} \in H_1,
\]
which again by \eqref{keyassumptionsonN1andkerpi} implies that $n_1 \in H_1$.  Thus, $N_1 \subseteq H_1$, proving the lemma.
\end{proof}
Applying Lemma \ref{GiHiNilemma} in a special case, we obtain
\begin{lemma} \label{primedividesindexlemma}
Let $G \subseteq \GL_2(\hat{\mbz})$ be an open subgroup, let $m_G$ be as in Definition \ref{definitionofme} and let $\rad'(m_G)$ be defined by \eqref{defofradprime}.
For any prime $\ell$ and $d \in \mbn$, one has
\[
\rad'(m_G) \mid d \mid d\ell \mid m_G \; \Longrightarrow \; \ell \text{ divides } \left[ \pi_{\ell d, d}^{-1}(G(d)) : G(\ell d) \right].
\]
\end{lemma}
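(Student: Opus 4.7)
The plan is to reduce the claim to showing $K_d := \ker(\pi_{d\ell, d}) \not\subseteq G(d\ell)$, and then derive a contradiction using Lemma \ref{GiHiNilemma} together with the characterization of $\gb_\ell$ from Corollary \ref{equalityofgbsubellandgbsubellprimecorollary}.

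First, since $\ell \mid d\ell \mid m_G$ forces $\ell \mid \rad'(m_G) \mid d$, I write $d = \ell^a e$ with $\gcd(\ell, e) = 1$ and $a \geq 1$. A case analysis using \eqref{defofradprime} shows $a \geq \ga_\ell$: for $\ell = 2$, the hypothesis $d\ell \mid m_G$ forces $\gb_2 \geq 2$, so $4 \mid \rad'(m_G) \mid d$. By CRT, $K_d \cong \ker(\pi_{\ell^{a+1}, \ell^a})$ is an $\ell$-group of order $\ell^4$, and a short-exact-sequence computation gives
\[
\left[ \pi_{\ell d, d}^{-1}(G(d)) : G(\ell d) \right] = [K_d : K_d \cap G(d\ell)],
\]
which is a power of $\ell$. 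Hence $\ell$ divides the index if and only if $K_d \not\subseteq G(d\ell)$.

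To show the latter, I would apply Lemma \ref{GiHiNilemma} with the intermediate modulus $m^* := \ell^{a+1} m_{G,(\ell)}$, where $m_{G,(\ell)} := m_G/\ell^{\gb_\ell}$. Set $G_1 = \GL_2(\mbz/m^*\mbz) \cong \GL_2(\mbz/\ell^{a+1}\mbz) \times \GL_2(\mbz/m_{G,(\ell)}\mbz)$, $G_2 = \GL_2(\mbz/d\ell\mbz)$, $\pi = \pi_{m^*, d\ell}$, $H_1 = G(m^*)$, $H_2 = G(d\ell)$, $N_2 = K_d$, and crucially $N_1 := \ker(\pi_{\ell^{a+1}, \ell^a}) \times \{I\}$, sitting entirely in the $\ell$-adic direct factor of $G_1$. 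Since $\pi$ acts as the identity on the $\ell$-adic factor and as reduction mod $e$ on the prime-to-$\ell$ factor, one has $\ker \pi = \{I\} \times \ker(\pi_{m_{G,(\ell)}, e})$, which has order prime to $\ell$; meanwhile $|N_1| = \ell^4$. Thus $\gcd(|N_1|, |\ker \pi|) = 1$ and $[N_1, \ker \pi] = 1$, while normality of $N_i$ and the equalities $\pi(H_1) = H_2$, $\pi(N_1) = N_2$ are immediate. Lemma \ref{GiHiNilemma} then yields the equivalence $N_1 \subseteq H_1 \Longleftrightarrow N_2 \subseteq H_2$.

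Supposing $K_d \subseteq G(d\ell)$, the equivalence forces $N_1 \subseteq G(m^*)$: for every $k \in \ker(\pi_{\ell^{a+1}, \ell^a})$, there exists $g \in G$ with $g_\ell \equiv k \pmod{\ell^{a+1}}$ and $g_{(\ell)} \equiv I \pmod{m_{G,(\ell)}}$. The element $(1_\ell, g_{(\ell)}^{-1}) \in \GL_2(\hat{\mbz})$ reduces to the identity modulo $m_G$, so it lies in $\ker(\GL_2(\hat{\mbz}) \to \GL_2(\mbz/m_G\mbz)) \subseteq G$ by the very definition of $m_G$. Multiplying $g$ by this element yields $(g_\ell, 1_{(\ell)}) \in G$, and letting $k$ vary produces
\[
\ker(\pi_{\ell^{a+1}, \ell^a}) \times \{1_{(\ell)}\} \subseteq (\pi_{\ell^\infty, \ell^{a+1}} \times \id_{(\ell)})(G).
\]
Since $a \geq \ga_\ell$ and $a < \gb_\ell = \gb_\ell'$ (because $\gb_\ell \geq a+1$), this containment contradicts the minimality built into Definition \ref{defofgbsubellprime} via Corollary \ref{equalityofgbsubellandgbsubellprimecorollary}, completing the proof.

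The main obstacle is the choice of the intermediate modulus $m^*$: with $m_G$ in its place, $\ker(\pi_{m_G, d\ell})$ contains the $\ell$-adic piece $\ker(\pi_{\ell^{\gb_\ell}, \ell^{a+1}})$ whenever $\gb_\ell > a+1$, and this destroys the coprimality hypothesis of Lemma \ref{GiHiNilemma}. Truncating the $\ell$-exponent of $m_G$ down to $\ell^{a+1}$ (while retaining the full prime-to-$\ell$ part $m_{G,(\ell)}$) keeps $N_1$ and $\ker \pi$ in disjoint direct factors of $G_1$ and lets Lemma \ref{GiHiNilemma} fire.
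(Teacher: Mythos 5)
Your proof is correct and follows essentially the same route as the paper's: the same truncated intermediate modulus $\ell^{a+1}m_{G,(\ell)}$, the same application of Lemma \ref{GiHiNilemma} with $N_1$ and $\ker\pi$ sitting in complementary direct factors, and the same appeal to $\gb_\ell=\gb_\ell'$ from Corollary \ref{equalityofgbsubellandgbsubellprimecorollary}. The only differences are cosmetic (you argue by contradiction where the paper takes the contrapositive), and your explicit lift from the finite level $m^*$ up to $\GL_2(\mbz/\ell^{a+1}\mbz)\times\GL_2(\mbz_{(\ell)})$ via $\ker\bigl(\GL_2(\hat{\mbz})\to\GL_2(\mbz/m_G\mbz)\bigr)\subseteq G$ spells out a step the paper leaves implicit.
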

\begin{proof}
We write $m  := m_G$ and 
\[
d =: \ell^{\gd_\ell} \cdot d_{(\ell)}, \quad m =: \ell^{\gb_\ell} \cdot m_{(\ell)}
\]
(where $\ell \nmid d_{(\ell)} m_{(\ell)}$), and note that, by hypothesis, $\ga_\ell \leq \gd_\ell < \gb_\ell$.  Further observe that, since $\gb_\ell = \gb_\ell'$, by Definition \ref{definitionofme} and Definition \ref{defofgbsubellprime}, we have
\[
\ker \left( \pi_{\ell^{\gd_\ell+1},\ell^{\gd_\ell}} \right) \times \{ 1_{m_{(\ell)}} \} \not\subseteq G(\ell^{\gd_\ell+1} m_{(\ell)}).
\]
We now apply Lemma \ref{GiHiNilemma} with
\[
\begin{array}{lll}
G_1 := \GL_2(\mbz/\ell^{\gd_\ell+1}m_{(\ell)}\mbz), &H_1 := G(\ell^{\gd_\ell+1}m_{(\ell)}), &N_1 := \ker( \pi_{\ell^{\gd_\ell+1},\ell^{\gd_\ell}} ) \times \{ 1_{m_{(\ell)}} \}, \\
G_2 := \GL_2(\mbz/\ell^{\gd_\ell+1}d_{(\ell)}\mbz), 
&H_2 := G(\ell^{\gd_\ell+1}d_{(\ell)}), &N_2 := \ker( \pi_{\ell^{\gd_\ell+1},\ell^{\gd_\ell}} ) \times \{ 1_{d_{(\ell)}} \},
\end{array}
\]
and $\pi : \GL_2(\mbz/\ell^{\gd_\ell+1}m_{(\ell)}\mbz) \longrightarrow \GL_2(\mbz/\ell^{\gd_\ell+1}d_{(\ell)}\mbz)$ the canonical projection map.  The conclusion is that
\[
\ker ( \pi_{\ell^{\gd_\ell+1},\ell^{\gd_\ell}} ) \times \{ 1_{d_{(\ell)}} \} \not\subseteq G(\ell^{\gd_\ell+1} d_{(\ell)}).
\]
Since $\ker ( \pi_{\ell^{\gd_\ell+1},\ell^{\gd_\ell}} ) \times \{ 1_{d_{(\ell)}} \} \simeq \ker ( \pi_{\ell d, d} )$ is an $\ell$-group, this proves the lemma.
\end{proof}
Applying Lemma \ref{primedividesindexlemma} prime by prime, for each prime $\ell$ dividing $m_G/\rad'(m_G)$, we obtain
\[
\frac{m_G}{\rad'(m_G)} \; \text{ divides } \; \left[ \pi^{-1}\left( G(\rad'(m_G)) \right) : G( m_G ) \right],
\]
proving Proposition \ref{mGprimeboundprop} in the case that \eqref{theconditionat3} does not hold.  In case \eqref{theconditionat3} does hold, we have $G(3) = \GL_2(\mbz/3\mbz)$ and, by Lemma \ref{usefulSL2elladicversion}, we must also have $\SL_2(\mbz/9\mbz) \not \subseteq G(9)$.  A computer search reveals that, up to conjugation in $\GL_2(\mbz/9\mbz)$, there are two subgroups $G_1, G_2 \subseteq \GL_2(\mbz/9\mbz)$ meeting these two criteria\footnote{The (genus zero) modular curve associated with $G_2$ has been considered by N. Elkies \cite{elkies}, who exhibited an explicit map from it to the $j$-line.}, and $G_1 \subseteq G_2$.  Furthermore, $[\GL_2(\mbz/9\mbz) : G_2] = 27$.   From this it follows that $27$ divides $[\GL_2(\mbz/9\mbz) : G(9)]$, and so
\[
9 \cdot 3  \; \text{ divides } \; \left[ \pi^{-1}\left(G(\rad'(m_G))\right) : G( 3\rad'(m_G) ) \right].
\] 
Now starting here and applying Lemma \ref{primedividesindexlemma}, prime by prime, we conclude the proof of Proposition \ref{mGprimeboundprop} in the case that \eqref{theconditionat3} holds.

\section{Proof of Proposition \ref{radmGdoubleprimeboundprop}}

We now prove Proposition \ref{radmGdoubleprimeboundprop}.  The proof will rely, in part, on the following corollary to the N\'{e}ron-Ogg-Shafarevich criterion (see for instance \cite{ogg} or \cite[Ch. VII, Theorem 7.1]{silverman}).
\begin{Theorem} \label{neronoggshafarevichthm}
Let $K$ be a number field, let $E$ be an elliptic curve over $K$ and let $\mc{L} \subseteq \mc{O}_K$ be a prime ideal of $K$, lying over the rational prime $\ell$ of $\mbz$.  The following are equivalent:
\begin{enumerate}
\item[(a)] $E$ has good reduction at $\mc{L}$.
\item[(b)] For each positive integer $m$ that is not divisible by $\ell$, the prime $\mc{L}$ is unramified in $K(E[m])$.
\item[(c)] The prime $\mc{L}$ is unramified in $K(E_{\tors,(\ell)})$.
\end{enumerate}
\end{Theorem}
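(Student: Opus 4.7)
The plan is to establish the equivalence (a) $\Leftrightarrow$ (b) $\Leftrightarrow$ (c) by reducing to the classical form of the N\'eron-Ogg-Shafarevich criterion, as stated, e.g., in \cite[Ch.~VII, Theorem 7.1]{silverman}. The easiest part is (b) $\Leftrightarrow$ (c): since
\[
K(E_{\tors,(\ell)}) \;=\; \bigcup_{{\begin{substack}{m \in \mbn \\ \ell \nmid m}\end{substack}}} K(E[m])
\]
is the compositum of the finite division fields $K(E[m])$ with $\ell \nmid m$, a prime $\mc{L}$ of $K$ is unramified in the (infinite) algebraic extension $K(E_{\tors,(\ell)})/K$ if and only if it is unramified in each finite subextension $K(E[m])/K$, which is the standard characterization of unramifiedness for infinite Galois extensions.

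For (a) $\Rightarrow$ (b), I would pass to the completion $K_{\mc{L}}$ with residue field $k_{\mc{L}}$ of characteristic $\ell$, and consider the reduction $\wt{E}/k_{\mc{L}}$, which is an elliptic curve by the good reduction hypothesis. For $m$ coprime to $\ell$, standard formal group arguments show that the kernel of reduction on $E(\ov{K_{\mc{L}}})$ is a pro-$\ell$ group, so the reduction map induces an injection $E[m] \hookrightarrow \wt{E}[m]$; comparing orders $|E[m]| = m^2 = |\wt{E}[m]|$ gives an isomorphism. Since this isomorphism is $G_{K_{\mc{L}}}$-equivariant and the action on $\wt{E}[m]$ factors through $G_{K_{\mc{L}}}/I_{\mc{L}}$, the inertia $I_{\mc{L}}$ must act trivially on $E[m]$. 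Hence $K(E[m])/K$ is unramified at $\mc{L}$.

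The main difficulty lies in (b) $\Rightarrow$ (a), which is the substantive content of the N\'eron-Ogg-Shafarevich criterion. Choosing any auxiliary prime $p \neq \ell$, condition (b) applied to $m = p^n$ for all $n \geq 1$ shows that $I_{\mc{L}}$ acts trivially on the entire Tate module $T_p(E) = \varprojlim_n E[p^n]$. Invoking the classical criterion, one concludes that $E$ has good reduction at $\mc{L}$. I would not reprove this classical statement from scratch, since a direct derivation requires either the theory of N\'eron models or a careful analysis of the minimal Weierstrass model at $\mc{L}$; instead, I would simply cite \cite{ogg} or \cite[Ch.~VII, Theorem 7.1]{silverman}. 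This reliance on a cited black box for the hard direction is the main shortcoming of the proof sketch, but it is entirely consistent with the framing of the statement as a ``corollary to the N\'eron-Ogg-Shafarevich criterion.''
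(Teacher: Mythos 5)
Your proposal is correct and matches the paper's treatment: the paper states this result as a corollary of the classical N\'eron--Ogg--Shafarevich criterion and simply cites \cite{ogg} and \cite[Ch.~VII, Theorem 7.1]{silverman} without further proof. Your reductions (b) $\Leftrightarrow$ (c) via finite subextensions and (b) $\Rightarrow$ (a) via triviality of inertia on $T_p(E)$ for an auxiliary $p \neq \ell$ are the standard and correct way to bridge the stated formulation with the cited one.
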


We presently reduce the proof of Proposition \ref{radmGdoubleprimeboundprop} to the following four lemmas.  The first lemma follows immediately from the classification subgroups of $\GL_2(\mbz/\ell\mbz)$.
\begin{lemma} \label{SL2notcontainedinlemma}
Let $\ell$ be a prime number and let $G(\ell) \subseteq \GL_2(\mbz/\ell\mbz)$ be any subgroup.  We have
\[
\SL_2(\mbz/\ell\mbz) \not \subseteq G(\ell) \; \Longrightarrow \; \ell \leq [ \GL_2(\mbz/\ell\mbz) : G(\ell) ].
\]
\end{lemma}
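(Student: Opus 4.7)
The plan is to first reduce the statement to a claim purely about $\SL_2(\mbz/\ell\mbz)$ and then invoke Dickson's classification. Setting $H := G(\ell) \cap \SL_2(\mbz/\ell\mbz)$, the subset $G(\ell)\cdot \SL_2(\mbz/\ell\mbz)$ is a subgroup of $\GL_2(\mbz/\ell\mbz)$ containing $G(\ell)$, and the second isomorphism theorem yields
\[
[\GL_2(\mbz/\ell\mbz):G(\ell)] \;\geq\; [G(\ell)\cdot \SL_2(\mbz/\ell\mbz):G(\ell)] \;=\; [\SL_2(\mbz/\ell\mbz):H].
\]
Since by hypothesis $\SL_2(\mbz/\ell\mbz)\not\subseteq G(\ell)$, the subgroup $H$ is proper in $\SL_2(\mbz/\ell\mbz)$, so it suffices to prove that every proper subgroup of $\SL_2(\mbz/\ell\mbz)$ has index at least $\ell$.

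For the small primes $\ell=2,3$ the claim can be verified by hand: $\SL_2(\mbz/2\mbz)\cong S_3$ has six elements (so every proper subgroup has index $\geq 2$), while $\SL_2(\mbz/3\mbz)$ is the binary tetrahedral group of order $24$ whose abelianization has order $3$, ruling out any index-$2$ subgroup.

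For $\ell\geq 5$ I would pass to $\PSL_2(\mbz/\ell\mbz)$. Let $\bar H$ denote the image of $H$ in the quotient. If $\bar H$ is proper, the classical determination of the minimal faithful permutation degree of the simple group $\PSL_2(\mbz/\ell\mbz)$ (equal to $\ell+1$ in general, and to $\ell$ exactly for the exceptional primes $\ell\in\{5,7,11\}$) yields $[\PSL_2(\mbz/\ell\mbz):\bar H]\geq \ell$, and hence $[\SL_2(\mbz/\ell\mbz):H]\geq \ell$. The remaining case $\bar H=\PSL_2(\mbz/\ell\mbz)$ cannot occur for a proper $H$: it would furnish a set-theoretic splitting of $1\to\{\pm I\}\to \SL_2(\mbz/\ell\mbz)\to \PSL_2(\mbz/\ell\mbz)\to 1$, which is known not to split (as already invoked in the proof of Lemma \ref{usefulSL2elladicversion} via \cite[Lemma 2.3]{zywina}).

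The only mild obstacle is to quote the minimal permutation degrees correctly at the sporadic primes $\ell\in\{5,7,11\}$, where the bound $\ell+1$ fails but the weaker bound $\ell$—which is exactly what we need—still holds; all other ingredients are standard consequences of Dickson's classification.
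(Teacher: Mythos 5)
Your proof is correct, but it takes a genuinely different route from the paper's. The paper proves Lemma~\ref{SL2notcontainedinlemma} by invoking the full Dickson-type classification of subgroups of $\GL_2(\mbz/\ell\mbz)$ (Borel, normalizers of split/non-split Cartans, exceptional groups, or $\supseteq\SL_2$) as in Lemma~\ref{subgroupsofGL2lemma}, and then simply reading off a table of indices $[\GL_2(\mbz/\ell\mbz):G(\ell)]$ for the maximal subgroups of each type, checking each is $\geq\ell$. You instead first reduce to $\SL_2$ via the second isomorphism theorem (the inequality $[\GL_2:G(\ell)] \geq [\SL_2 : G(\ell)\cap\SL_2]$ is clean and correct), then pass to $\PSL_2(\mbz/\ell\mbz)$ and cite the classical determination of the minimal index of a proper subgroup of $\PSL_2(\mbz/\ell\mbz)$, namely $\ell+1$ except for $\ell\in\{5,7,11\}$ where it is $\ell$ --- either way $\geq\ell$, which suffices --- together with a direct check at $\ell=2,3$. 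Both approaches ultimately rest on Dickson's classification, but yours isolates the content into the single statement about minimal degrees of $\PSL_2(\ell)$, which is arguably cleaner, whereas the paper's enumeration of Borel/Cartan/exceptional indices is more self-contained and matches the classification that is standard in the elliptic-curve Galois-representation literature. One small slip: in the case $\bar H = \PSL_2(\mbz/\ell\mbz)$ with $H$ proper you get a genuine \emph{group-theoretic} section of $1\to\{\pm I\}\to\SL_2\to\PSL_2\to1$ (since then $-I\notin H$ forces $H\cong\PSL_2$ and the inclusion is the splitting homomorphism), not merely a set-theoretic one; the non-splitting you cite rules out precisely such a homomorphic section, so the contradiction stands, but the terminology should be ``group-theoretic splitting.''
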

The second lemma is a consequence of the Weil pairing on an elliptic curve.
\begin{lemma} \label{elldividesdiscKlemma}
Let $E$ be an elliptic curve defined over a number field $K$, let $G := \rho_{E,K}(G_K) \subseteq \GL_2(\hat{\mbz})$, and let $\ell$ be a prime number.  For any positive integer $n$, we have
\[
\SL_2(\mbz/\ell^n\mbz) \subseteq G(\ell^n) \neq \GL_2(\mbz/\ell^n\mbz) \; \Longrightarrow \; \ell \mid \gD_K.
\]
Consequently,
\[
\SL_2(\mbz_{\ell}) \subseteq G_\ell \neq \GL_2(\mbz_{\ell}) \; \Longrightarrow \; \ell \mid \gD_K.
\]
\end{lemma}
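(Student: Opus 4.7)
The plan is to translate the hypothesis on $G(\ell^n)$ into a statement about cyclotomic subfields of $K$ via the Weil pairing, and then to invoke the ramification behavior of $\mbq(\mu_{\ell^n})/\mbq$ together with Minkowski's theorem.

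First I would recall that for each positive integer $m$, the Weil pairing $e_m : E[m] \times E[m] \to \mu_m$ is non-degenerate and Galois-equivariant; upon fixing a $\mbz/m\mbz$-basis of $E[m]$, this identifies $\det \circ \rho_{E,m}$ with the mod $m$ cyclotomic character $\chi_m : G_K \to (\mbz/m\mbz)^\times$, whose image is $\gal(K(\mu_m)/K)$ viewed inside $(\mbz/m\mbz)^\times$. Applied with $m = \ell^n$, the short exact sequence
\[
1 \longrightarrow \SL_2(\mbz/\ell^n\mbz) \longrightarrow \GL_2(\mbz/\ell^n\mbz) \xrightarrow{\ \det\ } (\mbz/\ell^n\mbz)^\times \longrightarrow 1
\]
together with the hypothesis $\SL_2(\mbz/\ell^n\mbz) \subseteq G(\ell^n) \neq \GL_2(\mbz/\ell^n\mbz)$ forces $\det G(\ell^n)$ to be a \emph{proper} subgroup of $(\mbz/\ell^n\mbz)^\times$. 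Consequently $[K(\mu_{\ell^n}):K] < \phi(\ell^n) = [\mbq(\mu_{\ell^n}):\mbq]$, so that the intersection $F := K \cap \mbq(\mu_{\ell^n})$ strictly contains $\mbq$.

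Next I would use the fact that $\mbq(\mu_{\ell^n})/\mbq$ is ramified only at the prime $\ell$: since $F/\mbq$ is a non-trivial subextension, Minkowski's theorem guarantees $|\gD_F| > 1$, and any prime ramifying in $F$ must ramify in $\mbq(\mu_{\ell^n})$, hence equal $\ell$. Thus $\ell \mid \gD_F$. Finally, since $F \subseteq K$, the tower formula $\gD_K = N_{F/\mbq}(\gD_{K/F}) \cdot \gD_F^{[K:F]}$ shows $\gD_F \mid \gD_K$, so $\ell \mid \gD_K$, which is the desired conclusion.

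For the ``consequently'' clause, if $\SL_2(\mbz_\ell) \subseteq G_\ell \neq \GL_2(\mbz_\ell)$, then because $G_\ell$ is a closed subgroup of the profinite group $\GL_2(\mbz_\ell)$, there must exist some $n \geq 1$ with $G(\ell^n) \neq \GL_2(\mbz/\ell^n\mbz)$; for that $n$, $\SL_2(\mbz/\ell^n\mbz) \subseteq G(\ell^n)$ by projecting, and the first statement yields $\ell \mid \gD_K$. There is no substantial obstacle here; the only mild thing to keep track of is the degenerate case $(\ell,n)=(2,1)$, where $\SL_2 = \GL_2$ mod $2$, so the hypothesis is vacuous and nothing need be checked.
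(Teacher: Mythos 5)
Your argument is correct and follows essentially the same route as the paper: the Weil pairing identifies the determinant of the mod $\ell^n$ representation with the cyclotomic character, the hypothesis forces $\det G(\ell^n)$ to be proper, hence $K \cap \mbq(\mu_{\ell^n}) \neq \mbq$, and ramification at $\ell$ in that intersection gives $\ell \mid \gD_K$. The only cosmetic difference is that you invoke Minkowski's theorem to locate a ramified prime, whereas the paper gets it directly from the fact that $\mbq(\mu_{\ell^n})/\mbq$ is totally ramified at $\ell$, so every nontrivial subextension is ramified at $\ell$.
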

Our third lemma utilizes the N\'{e}ron-Ogg-Shafarevich criterion in the form of Theorem \ref{neronoggshafarevichthm}.  
\begin{lemma} \label{neronoggshafarevichlemmaellgeq5}
Let $E$ be an elliptic curve defined over a number field $K$, let $G := \rho_{E,K}(G_K) \subseteq \GL_2(\hat{\mbz})$, let $m_G$ be as in Definition \ref{definitionofme} and let $\ell$ be an odd prime number dividing $m_G$.  We then have
\[
G_{\ell} = \GL_2(\mbz_{\ell}) \; \Longrightarrow \; \ell \mid \gD_K N_{K/\mbq}(\gD_E).
\]
\end{lemma}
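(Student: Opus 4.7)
The plan is to argue the contrapositive: suppose $\ell \nmid \gD_K N_{K/\mbq}(\gD_E)$ together with the standing hypothesis $G_\ell = \GL_2(\mbz_\ell)$, and deduce $\ell \nmid m_G$. First I apply Theorem \ref{neronoggshafarevichthm}: the assumption on $\gD_E$ implies that $E$ has good reduction at every prime $\mc{L}$ of $K$ above $\ell$, the assumption on $\gD_K$ implies that $\ell$ is unramified in $K/\mbq$, and part (b) of that theorem then yields that $K(E_{\tors,(\ell)})/K$ is unramified at every prime above $\ell$. Set $F := K(E[\ell^\infty]) \cap K(E_{\tors,(\ell)})$. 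Everything reduces to showing $F = K$: writing $G$ as the fibered product $G_\ell \times_{\psi} G_{(\ell)}$ over $\gal(F/K)$, the equality $F = K$ makes $G = \GL_2(\mbz_\ell) \times G_{(\ell)}$ a genuine direct product, from which one checks directly that $m_G$ divides the conductor of $G_{(\ell)}$ inside $\GL_2(\mbz_{(\ell)})$, which is coprime to $\ell$.

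To prove $F = K$, I would exploit that $F \subseteq K(E_{\tors,(\ell)})$ forces $F/K$ to be unramified at every prime above $\ell$. Identifying $\gal(K(E[\ell^\infty])/K)$ with $\GL_2(\mbz_\ell)$ via $G_\ell = \GL_2(\mbz_\ell)$, $F$ is the fixed field of a closed normal subgroup $N \subseteq \GL_2(\mbz_\ell)$, and the unramifiedness of $F/K$ at a fixed prime $\mc{L}$ above $\ell$ means $N$ contains the image $I_\ell$ of the inertia subgroup $I_\mc{L}$ in $\GL_2(\mbz_\ell)$. It therefore suffices to prove that the closed normal subgroup of $\GL_2(\mbz_\ell)$ topologically generated by $I_\ell$ equals $\GL_2(\mbz_\ell)$. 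I would carry this out by cases on the reduction type: in the ordinary case, the standard structure of the Galois action on the Tate module of an ordinary elliptic curve provides a basis in which $I_\ell$ contains the half-torus $\{\mathrm{diag}(u,1) : u \in \mbz_\ell^\times\}$, with the $u$-entry being the cyclotomic character (surjective on $I_\mc{L}$ because $K_\mc{L}/\mbq_\ell$ is unramified); in the supersingular case, $I_\ell$ contains (up to conjugation) a non-split Cartan subgroup with surjective determinant. In either case the mod-$\ell$ normal-closure computation is explicit: conjugating by the Weyl element yields the full diagonal torus, and conjugating $\mathrm{diag}(u,1)$ by $\left(\begin{smallmatrix} 1 & 1 \\ 0 & 1 \end{smallmatrix}\right)$ and squaring (this step needs $\ell$ odd; one can take $u = 2$) produces $\left(\begin{smallmatrix} 1 & c \\ 0 & 1 \end{smallmatrix}\right)$ with $c \in \mbz_\ell^\times$, which together with its Weyl conjugate topologically generate $\SL_2(\mbz/\ell\mbz)$, and combined with surjective determinant this gives $\GL_2(\mbz/\ell\mbz)$. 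Lifting back from $\mbz/\ell\mbz$ up to $\mbz_\ell$ is then handled by Lemma \ref{usefulSL2elladicversion}.

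The main obstacle will be the case $\ell = 3$, since Lemma \ref{usefulSL2elladicversion} there demands control of $\SL_2(\mbz/9\mbz)$ inside the mod-$9$ normal closure, and since there is an exceptional normal subgroup $Q_8 \subseteq \GL_2(\mbz/3\mbz)$ of order $8$ that must be ruled out. In the supersingular subcase the exclusion is immediate because a non-split Cartan modulo $3$ is cyclic of order $8$ while $Q_8$ has no element of order $8$; in the ordinary subcase the explicit unipotent generation outlined above runs verbatim at the mod-$9$ level (one computes $2^{-1} = 5$ in $\mbz/9\mbz$ and checks that the resulting unipotent topologically generates the full upper unipotent subgroup modulo $9$, after which the Weyl conjugate supplies the lower unipotents). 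Once the normal closure of $I_\ell$ is shown to equal $\GL_2(\mbz_\ell)$, we obtain $N = \GL_2(\mbz_\ell)$, hence $F = K$, contradicting $\ell \mid m_G$ via the fibered-product reduction in the first paragraph.
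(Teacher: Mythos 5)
Your route is viable in outline but genuinely different from the paper's, and the difference is exactly where the gap lives. Like the paper, you reduce everything to showing that the entanglement field $F = K(E[\ell^\infty]) \cap K(E_{\tors,(\ell)})$ is trivial, i.e.\ that the closed normal subgroup $N \unlhd \GL_2(\mbz_\ell)$ fixing $F$ is everything. The paper never touches the local representation at $\ell$: it uses $\occ_{\JH}$, Goursat, and Lemmas \ref{PSL2Zmodplemma} and \ref{kernelat3lemma} to show $F \cap K(\mu_{\ell^\infty}) \neq K$, and then needs only the total ramification of $\ell$ in $\mbq(\mu_{\ell^\infty})$. You instead feed the image of inertia at $\mc{L} \mid \ell$ into $N$ and compute its normal closure, which requires the local study at $\ell$ (connected--\'etale sequence in the ordinary case, fundamental characters and Lubin--Tate theory in the supersingular case) --- classical but much heavier input, and you state it imprecisely: in the ordinary case inertia lands in $\bigl\{\bigl(\begin{smallmatrix} u & b \\ 0 & 1\end{smallmatrix}\bigr)\bigr\}$ and surjects onto the $u$-coordinate (because $\mc{L}/\ell$ is unramified), but it need not contain the half-torus itself; and the unipotent $\bigl(\begin{smallmatrix} 1 & c \\ 0 & 1\end{smallmatrix}\bigr)$ comes from multiplying the conjugate of $\bigl(\begin{smallmatrix} u & b \\ 0 & 1\end{smallmatrix}\bigr)$ by its inverse, not from squaring. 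These are repairable and do not break the normal-closure computation for $\ell \geq 5$.

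The genuine gap is the supersingular subcase at $\ell = 3$. By your own setup, Lemma \ref{usefulSL2elladicversion} requires $\SL_2(\mbz/9\mbz) \subseteq N(9)$, but your supersingular argument (``a non-split Cartan mod $3$ is cyclic of order $8$, which $Q_8$ cannot contain'') only establishes the mod-$3$ statement $N(3) = \GL_2(\mbz/3\mbz)$ and says nothing about level $9$. Filling this in your framework would require the mod-$9$ image of inertia for supersingular reduction (the image of $(\mc{O}_L/9)^\times$ with $L/\mbq_3$ unramified quadratic, via Lubin--Tate theory) together with a verification that the $\GL_2(\mbz/3\mbz)$-conjugates of the image of $1+3\mc{O}_L$ generate $\ker\bigl(\GL_2(\mbz/9\mbz) \to \GL_2(\mbz/3\mbz)\bigr)$; neither step is in your proposal and neither is immediate. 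A cleaner repair stays entirely at the bottom level: since $\det \circ \rho_{E,3^\infty}$ is the cyclotomic character and $\mc{L}/3$ is unramified, $\det(N) \supseteq \chi(I_{\mc{L}}) = \mbz_3^\times$, hence $\langle \SL_2(\mbz_3), N\rangle = \GL_2(\mbz_3)$, and the paper's Lemma \ref{kernelat3lemma} then yields $N = \GL_2(\mbz_3)$ with no mod-$9$ inertia computation in either subcase.
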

For the prime $\ell = 2$ we must make a finer analysis, in the form of the next (and final) lemma.  Let us make the abbreviation
\begin{equation*} \label{defofrprime}
r' := \rad'(m_G).
\end{equation*}
\begin{lemma} \label{ellequals2lemma}
Let $E$ be an elliptic curve defined over a number field $K$, let $G := \rho_{E,K}(G_K) \subseteq \GL_2(\hat{\mbz})$, let $m_G$ be as in Definition \ref{definitionofme} and assume that $4$ divides $m_G$.  We then have
\[
\GL_2(\mbz/4\mbz) \times \{ 1_{r'_{(2)}} \} \not \subseteq G(r') \; \Longrightarrow \; 4 \leq 2 \left[ \pi^{-1} \left( G\left(r'_{(2)} \right) \right) : G\left(r' \right) \right]
\]
and
\[
\GL_2(\mbz/4\mbz) \times \{ 1_{r'_{(2)}} \} \subseteq G(r') \; \Longrightarrow \; 2 \mid \gD_K N_{K/\mbq}(\gD_E).
\]
\end{lemma}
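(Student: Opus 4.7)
My plan is to treat the two implications separately. The first is purely group-theoretic: under the Chinese Remainder identification $\GL_2(\mbz/r'\mbz) \simeq \GL_2(\mbz/4\mbz) \times \GL_2(\mbz/r'_{(2)}\mbz)$, the projection $\pi$ becomes projection to the second factor, so $\ker\pi = \GL_2(\mbz/4\mbz) \times \{1_{r'_{(2)}}\}$. Since $\pi^{-1}(G(r'_{(2)})) = (\ker\pi) \cdot G(r')$, the second isomorphism theorem gives
\[
\bigl[\pi^{-1}(G(r'_{(2)})) : G(r')\bigr] = [\ker\pi : \ker\pi \cap G(r')].
\]
Under the hypothesis $\ker\pi \not\subseteq G(r')$, the intersection $\ker\pi \cap G(r')$ is a proper subgroup of $\ker\pi$, so this index is at least $2$, and the bound $4 \leq 2 \cdot [\pi^{-1}(G(r'_{(2)})) : G(r')]$ follows.

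For the second implication I argue by contrapositive. Assume $2 \nmid \gD_K N_{K/\mbq}(\gD_E)$, so $K/\mbq$ is unramified at $2$ and $E$ has good reduction at every prime $\mathfrak{p} \mid 2$ of $K$; by Theorem \ref{neronoggshafarevichthm} each such $\mathfrak{p}$ is unramified in $K(E[r'_{(2)}])$, hence $K(E[r'_{(2)}])/\mbq$ is unramified at $2$. Suppose for contradiction that $\GL_2(\mbz/4\mbz) \times \{1_{r'_{(2)}}\} \subseteq G(r')$. Projecting yields $\gal(K(E[4])/K) \simeq \GL_2(\mbz/4\mbz)$ together with the linear disjointness $K(E[4]) \cap K(E[r'_{(2)}]) = K$; the Weil pairing identity $\det \circ \rho_{E,K} = \chi_{\mathrm{cyc}}$ then gives $[K(\mu_4):K] = 2$, so $i \notin K$. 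Applying the tower formula along $\mbq \subset K \subset K(i)$ together with $2 \mid \disc(\mbq(i)/\mbq) \mid \disc(K(i)/\mbq)$ and $2 \nmid \gD_K$ forces some prime $\mathfrak{p}_0 \mid 2$ of $K$ to ramify in $K(i) \subseteq K(E[4])$ with inertia group $\{\pm 1\}$.

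I would then promote this ramification information to level $8 r'_{(2)}$. Since $E$ has good reduction at $\mathfrak{p}_0$, Theorem \ref{neronoggshafarevichthm} forces the inertia subgroup $I_{\mathfrak{p}_0} \subseteq \gal(K(E[8 r'_{(2)}])/K) \hookrightarrow G(8 r'_{(2)})$ to lie in $\GL_2(\mbz/8\mbz) \times \{1_{r'_{(2)}}\}$, while the unramifiedness of $K_{\mathfrak{p}_0}/\mbq_2$ makes $K_{\mathfrak{p}_0}(\mu_8)/K_{\mathfrak{p}_0}$ totally ramified of degree $4$, so $\det I_{\mathfrak{p}_0}$ surjects onto $(\mbz/8\mbz)^\times$. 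Combining this inertial information with the hypothesis $\GL_2(\mbz/4\mbz) \times \{1_{r'_{(2)}}\} \subseteq G(r')$ and the dichotomy of Lemma \ref{usefulSL2elladicversion} applied to $G_2 \subseteq \GL_2(\mbz_2)$ should force $\ker \pi_{8,4} \times \{1_{r'_{(2)}}\} \subseteq G(8 r'_{(2)})$. By Definition \ref{defofgbsubellprime} and Corollary \ref{equalityofgbsubellandgbsubellprimecorollary} this gives $\gb_2 \leq 1$, contradicting $4 \mid m_G$.

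The main obstacle I anticipate is this final lifting step, particularly in the case $[\GL_2(\mbz/8\mbz):G(8)] = 2$ of Lemma \ref{usefulSL2elladicversion}, where $G(8)$ is cut out by a nontrivial character $\GL_2(\mbz/8\mbz) \to \{\pm 1\}$; ruling out this alternative will require an explicit case analysis of such characters and of how they evaluate on the inertial ``reflection-type'' elements produced at $\mathfrak{p}_0$, combined with the cyclotomic constraint $\det G(8) \supseteq (\mbz/8\mbz)^\times$ coming from the full ramification of $\mu_8$ at $\mathfrak{p}_0$.
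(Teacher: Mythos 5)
Your proof of the first implication is correct and in fact a bit cleaner than the paper's: the paper invokes Goursat to write $G(r')$ as a fibered product $G(4)\times_\psi G(r'_{(2)})$ and then argues by cases on whether $G(4)=\GL_2(\mbz/4\mbz)$; your observation that $\pi^{-1}(G(r'_{(2)}))=(\ker\pi)\cdot G(r')$ together with the second isomorphism theorem reduces everything to $[\ker\pi:\ker\pi\cap G(r')]\geq 2$, which follows instantly from $\ker\pi\not\subseteq G(r')$. Both routes are essentially equivalent, but yours avoids the case split.

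For the second implication, however, your argument has a genuine gap, and you yourself flag it: the ``final lifting step'' is not established. Two specific things are missing. First, even granting the inertial information at $\mathfrak{p}_0$ (namely that $I_{\mathfrak{p}_0}\subseteq\GL_2(\mbz/8\mbz)\times\{1_{r'_{(2)}}\}$ with $\det I_{\mathfrak{p}_0}=(\mbz/8\mbz)^\times$), this only tells you $I_{\mathfrak{p}_0}$ has order at least $4$ and surjects under $\det$; it does not, by itself, force $\ker\pi_{8,4}\times\{1_{r'_{(2)}}\}\subseteq G(8r'_{(2)})$. The cyclotomic/determinant constraint only rules out the two index-$2$ subgroups of $\GL_2(\mbz/8\mbz)$ that factor through $\det$ (the kernels of $\chi_8$ and $\chi_8\chi_4$ in the paper's notation); the other two candidates, $\ker(\chi_8\ve)$ and $\ker(\chi_8\chi_4\ve)$, do have full determinant image and are \emph{not} excluded by your inertial argument as sketched. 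The paper rules these out by identifying the corresponding fixed fields inside $K(E[8])$ as $K(\sqrt{\pm 2\gD_E})$ and applying the valuation-theoretic Lemma \ref{ramificationlemma}: if $2\nmid\gD_K N_{K/\mbq}(\gD_E)$ then $\sqrt{\pm 2\gD(E_{\weier})}\notin K$. You need this quadratic-field identification and that lemma; ``evaluating characters on reflection-type elements'' is not an adequate substitute without carrying it out. Second, your sketch elides the entanglement subcase: even when $G_2=\GL_2(\mbz_2)$ (so $G(8)=\GL_2(\mbz/8\mbz)$), the hypothesis $4\mid m_G$ forces a nontrivial fibering between $\GL_2(\mbz_2)$ and $G(r'_{(2)})$, and one must show that the kernel $N=\ker\psi_2\subseteq\GL_2(\mbz_2)$ of that fibering still has $N(4)=\GL_2(\mbz/4\mbz)$ (this uses the full-product hypothesis $\GL_2(\mbz/4\mbz)\times\{1\}\subseteq G(r')$), so that Lemma \ref{usefulSL2elladicversion} applies to $N$ and the same four-character analysis can be run on $N(8)$, producing a quadratic subfield of $K(E_{\tors,(2)})$ and an application of the N\'eron--Ogg--Shafarevich criterion. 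Your sketch does pass to level $8r'_{(2)}$ but never isolates this kernel $N$, which is the object the argument must actually control.
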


Let us now deduce Proposition \ref{radmGdoubleprimeboundprop} from Lemmas \ref{SL2notcontainedinlemma} -- \ref{ellequals2lemma}, postponing the proofs of those lemmas until later.  First, combining Lemma \ref{usefulSL2elladicversion} with Lemmas \ref{SL2notcontainedinlemma}, \ref{elldividesdiscKlemma} and \ref{neronoggshafarevichlemmaellgeq5}, one concludes the following implications, for any prime $\ell \geq 5$ that divides $m_G$:
\begin{equation*} %\label{thethreecasesellgeq5}
\begin{split}
&\SL_2(\mbz/\ell\mbz) \not \subseteq G(\ell) \; \Longrightarrow \; \ell \leq [ \GL_2(\mbz/\ell\mbz) : G(\ell) ], \\
&\SL_2(\mbz/\ell\mbz) \subseteq G(\ell)  \; \Longrightarrow \; \ell \mid \gD_K N_{K/\mbq}(\gD_E).
\end{split}
\end{equation*}
This implies that
\begin{equation} \label{boundforprimesabove5}
\begin{split}
r'_{(6)} &\leq \prod_{{\begin{substack} {\ell \geq 5, \, \ell \mid r' \\ \SL_2(\mbz/\ell\mbz) \not \subseteq G(\ell) } \end{substack}}}  [ \GL_2(\mbz/\ell\mbz) : G(\ell) ] \prod_{{\begin{substack} {\ell \geq 5 \\\ell \mid \gD_K N_{K/\mbq}(\gD_E) \\ \SL_2(\mbz/\ell\mbz) \subseteq G(\ell) } \end{substack}}} \ell \\
&\leq [ \GL_2(\mbz/r'_{(6)}\mbz) : G(r'_{(6)}) ] \rad \left( | \gD_K N_{K/\mbq}(\gD_E) | \right)_{(6)}.
\end{split}
\end{equation}

If the prime $\ell = 3$ divides $m_G$ then either condition \eqref{theconditionat3} holds or it does not hold.  Let us first assume that \eqref{theconditionat3} does not hold, i.e. we assume that it is \emph{not} the case that $9$ divides $m_G$, $G(3) = \GL_2(\mbz/3\mbz)$ and $\SL_2(\mbz_3) \not \subseteq G_3$.  We then use Lemmas \ref{SL2notcontainedinlemma}, \ref{elldividesdiscKlemma} and \ref{neronoggshafarevichlemmaellgeq5}, together with Lemma \ref{usefulSL2elladicversion}, to deduce the following implications:
\begin{equation*} %\label{thethreecaseselleq3}
\begin{split}
&\SL_2(\mbz/3\mbz) \not \subseteq G(3) \; \Longrightarrow \; 3 \leq [ \GL_2(\mbz/3\mbz) : G(3) ], \\
&\SL_2(\mbz/3\mbz) \subseteq G(3) \neq \GL_2(\mbz/3\mbz) \; \Longrightarrow \; 3 \mid \gD_K, \\
&G(3) = \GL_2(\mbz/3\mbz) \; \text{ and } \; \SL_2(\mbz_3) \subseteq G_3 \neq \GL_2(\mbz_3) \; \Longrightarrow \; 3 \mid \gD_K, \\
&G(3) = \GL_2(\mbz/3\mbz) \; \text{ and } \; G_3 = \GL_2(\mbz_3) \; \Longrightarrow \; 3 \mid N_{K/\mbq}(\gD_E).
\end{split}
\end{equation*}
Inserting this information into \eqref{boundforprimesabove5}, we find that
\begin{equation} \label{boudnforoddprimes}
r'_{(2)} \leq [ \GL_2(\mbz/r'_{(2)}\mbz) : G(r'_{(2)}) ] \rad \left( | \gD_K N_{K/\mbq}(\gD_E) | \right)_{(2)}.
\end{equation}
On the other hand, in case \eqref{theconditionat3} \emph{does} hold, then we obviously have
\begin{equation} \label{incasetheconditionat3doeshold}
\begin{split}
\frac{r'_{(2)}}{3} = r'_{(6)} &\leq [ \GL_2(\mbz/r'_{(6)}\mbz) : G(r'_{(6)}) ] \rad \left( | \gD_K N_{K/\mbq}(\gD_E) | \right)_{(6)} \\
&\leq [ \GL_2(\mbz/r'_{(2)}\mbz) : G(r'_{(2)}) ] \rad \left( | \gD_K N_{K/\mbq}(\gD_E) | \right)_{(2)}.
\end{split}
\end{equation}
If $\ell = 2$ divides $m_G$, then either $4 \mid m_G$ or not.  If $4 \nmid m_G$, then multiplying both sides of \eqref{boudnforoddprimes} (resp. of \eqref{incasetheconditionat3doeshold}) by 2, we obtain the bound of Proposition \ref{radmGdoubleprimeboundprop}.  Now assume that $4 \mid m_G$.  
In this case, when \eqref{theconditionat3} does not hold, we insert the result of Lemma \ref{ellequals2lemma} into \eqref{boudnforoddprimes}, concluding that
\begin{equation*}
r' = 4r'_{(2)} \leq 2 [ \GL_2(\mbz/r'\mbz) : G(r') ] \rad \left( | \gD_K N_{K/\mbq}(\gD_E) | \right).
\end{equation*}
Likewise, in case condition \eqref{theconditionat3} does hold, we insert these results into \eqref{incasetheconditionat3doeshold} and obtain
\begin{equation*}
\frac{r'}{3} = \frac{4r'_{(2)}}{3} \leq 2 [ \GL_2(\mbz/r'\mbz) : G(r') ] \rad \left( | \gD_K N_{K/\mbq}(\gD_E) | \right).
\end{equation*}
Thus we see that Lemmas \ref{SL2notcontainedinlemma} -- \ref{ellequals2lemma} indeed imply Proposition \ref{radmGdoubleprimeboundprop}.  

We now prove each of these lemmas.
First we state an auxiliary lemma that is used throughout and may be found in \cite[Lemma (5.2.1)]{ribet}.
\begin{lemma} \label{goursatlemma} (Goursat's Lemma)
\noindent
Let $G_1$, $G_2$ be groups and for $i \in \{1, 2 \}$ denote by $\pr_i : G_1 \times G_2 \longrightarrow G_i$ the projection map onto the $i$-th factor. Let $G \subseteq G_1 \times G_2$ be a subgroup and 
assume that 
$$
\pr_1(G) = G_1, \; \pr_2(G) = G_2.
$$
Then 
there exists a group $\Gamma$ together with a pair of surjective homomorphisms 
\[
\begin{split}
\psi_1 : G_1 &\longrightarrow \Gamma \\
\psi_2 : G_2 &\longrightarrow \Gamma
\end{split}
\]
so that 
\[
G = G_1 \times_\psi G_2 := \{ (g_1,g_2) \in G_1 \times G_2 : \psi_1(g_1) = \psi_2(g_2) \}.
\]
\end{lemma}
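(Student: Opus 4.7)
The plan is to realise $\Gamma$ as a common quotient of $G_1$ and $G_2$ by normal subgroups built directly out of $G$. First I would define
\[
N_1 := \{ g_1 \in G_1 : (g_1, 1_{G_2}) \in G \}, \qquad N_2 := \{ g_2 \in G_2 : (1_{G_1}, g_2) \in G \},
\]
which are subgroups of $G_1$ and $G_2$ respectively. Normality of $N_1$ in $G_1$ uses the surjectivity hypothesis $\pr_1(G) = G_1$: given $g_1 \in G_1$, choose any $h \in G_2$ with $(g_1, h) \in G$, and then for $n_1 \in N_1$ the element $(g_1, h)(n_1, 1_{G_2})(g_1, h)^{-1} = (g_1 n_1 g_1^{-1}, 1_{G_2})$ lies in $G$. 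Normality of $N_2$ is symmetric.

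Next I would introduce the candidate isomorphism $\varphi : G_1/N_1 \longrightarrow G_2/N_2$ defined by $\varphi(g_1 N_1) := g_2 N_2$, where $g_2$ is any element of $G_2$ with $(g_1, g_2) \in G$ (such a $g_2$ exists since $\pr_1(G) = G_1$). Well-definedness is the only delicate point: if $(g_1, g_2), (g_1, g_2') \in G$, then $(1_{G_1}, g_2^{-1} g_2') \in G$, so $g_2^{-1} g_2' \in N_2$; and if $g_1$ is replaced by $g_1 n_1$ with $n_1 \in N_1$, then $(g_1 n_1, g_2) = (g_1, g_2)(n_1, 1_{G_2})^{-1} \cdot (n_1, 1_{G_2})$ shows the output coset is unchanged. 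The map $\varphi$ is a homomorphism because $G$ is closed under multiplication; it is surjective because $\pr_2(G) = G_2$; and it is injective because $\varphi(g_1 N_1) = N_2$ means $(g_1, g_2) \in G$ for some $g_2 \in N_2$, hence $(1_{G_1}, g_2) \in G$ and therefore $(g_1, 1_{G_2}) \in G$, i.e.\ $g_1 \in N_1$.

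Finally I would set $\Gamma := G_2/N_2$, take $\psi_2$ to be the canonical quotient map, and let $\psi_1 : G_1 \to \Gamma$ be the composition of the quotient $G_1 \to G_1/N_1$ with $\varphi$; both maps are surjective. The identity $G = G_1 \times_\psi G_2$ then splits into two inclusions. If $(g_1, g_2) \in G$, then by construction $\varphi(g_1 N_1) = g_2 N_2$, so $\psi_1(g_1) = \psi_2(g_2)$. Conversely, if $\psi_1(g_1) = \psi_2(g_2)$, pick any witness $(g_1, g_2') \in G$; then $g_2' N_2 = g_2 N_2$, so $g_2 (g_2')^{-1} \in N_2$, hence $(1_{G_1}, g_2 (g_2')^{-1}) \in G$, and multiplying with $(g_1, g_2') \in G$ yields $(g_1, g_2) \in G$.

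The content is purely bookkeeping; the only step that needs genuine care is the verification that $\varphi$ is well defined, and everything else follows formally from $G$ being a subgroup together with the two surjectivity hypotheses.
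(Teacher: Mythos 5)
Your proof is correct and is the standard argument for Goursat's Lemma (the paper itself gives no proof, citing Ribet's Lemma (5.2.1) instead, and your construction of $N_1$, $N_2$ and the induced isomorphism $G_1/N_1 \simeq G_2/N_2$ is exactly the classical route). One cosmetic slip: the displayed identity $(g_1 n_1, g_2) = (g_1, g_2)(n_1, 1_{G_2})^{-1}\cdot(n_1,1_{G_2})$ just collapses to $(g_1,g_2)$; what you want there is simply $(g_1 n_1, g_2) = (g_1, g_2)\cdot(n_1, 1_{G_2}) \in G$, which combined with your first well-definedness check finishes the argument.
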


\subsection{Proof of Lemma \ref{SL2notcontainedinlemma}}

To prove Lemma \ref{SL2notcontainedinlemma}, we will use the following classification of certain proper subgroups of $\GL_2$.  
\begin{Definition}
Let $\ell$ be any prime number.
\begin{itemize}
\item[(i)]
A subgroup $G(\ell) \subseteq \GL_2(\mbz/\ell\mbz)$ is called a {\bf{Borel subgroup}} if it is conjugate in 
$\GL_2(\mbz/\ell\mbz)$ to the subgroup 
\begin{equation} \label{defofBofell}
\mc{B}(\ell) := \left\{
\begin{pmatrix}
a & b \\
0 & d
\end{pmatrix} : \;
b \in \mbz/\ell\mbz, \; a, d \in (\mbz/\ell\mbz)^\times
\right\}.
\end{equation}
\\
\item[(ii)]
A subgroup $G(\ell) \subseteq \GL_2(\mbz/\ell\mbz)$ is called a {\bf{normalizer of a split Cartan subgroup}} if it is conjugate in $\GL_2(\mbz/\ell\mbz)$ to the subgroup 
\begin{equation} \label{defofNsubsofell}
\mc{N}_{\s}(\ell) := \left\{
\begin{pmatrix}
a & 0 \\
0 & d
\end{pmatrix} : \; a, d \in (\mbz/\ell\mbz)^\times
\right\}
\cup
\left\{
\begin{pmatrix}
0 & b \\
c & 0
\end{pmatrix} : \; b, c \in (\mbz/\ell\mbz)^\times
\right\}.
\end{equation}
If $\ell$ is odd, then $G(\ell)$ is called a {\bf{normalizer of a non-split Cartan subgroup}} if it is conjugate in $\GL_2(\mbz/\ell\mbz)$ to the subgroup 
\begin{equation} \label{defofNsubnsofell}
\mc{N}_{\ns}(\ell) := \left\{
\begin{pmatrix}
x & \ve y \\
y & x
\end{pmatrix} : \; 
\begin{matrix}
x, y \in \mbz/\ell\mbz, \\ 
x^2-\ve y^2 \neq 0
\end{matrix}
\right\}
\cup
\left\{
\begin{pmatrix}
x & -\ve y \\
y & -x
\end{pmatrix} : \; 
\begin{matrix}
x, y \in \mbz/\ell\mbz, \\ 
x^2-\ve y^2 \neq 0
\end{matrix}
\right\},
\end{equation}
where $\ve$ is any fixed non-square in $(\mbz/\ell\mbz)^\times$.  If $\ell = 2$, then $G(2)$ is called a normalizer of a non-split Cartan subgroup if $G(2) = \GL_2(\mbz/2\mbz)$.
\\
\item[(iii)]
 A subgroup $G(\ell) \subseteq \GL_2(\mbz/\ell\mbz)$ is called an {\bf{exceptional group}} if its image in $\PGL_2(\mbz/\ell\mbz)$ is isomorphic to one of the groups $A_4$, $S_4$ or $A_5$ (the symmetric or alternating groups).  
 \end{itemize}
\end{Definition}
The following lemma may be deduced from Propositions 15, 16 and Section 2.6 of \cite{serre}.  
\begin{lemma} \label{subgroupsofGL2lemma}
Let $G(\ell) \subseteq \GL_2(\mbz/\ell\mbz)$ be a subgroup.  Then one of the following must hold:
\begin{enumerate}
\item $G(\ell)$ is contained in a Borel subgroup.
\item $G(\ell)$ is contained in the normalizer of a split Cartan subgroup.
\item $G(\ell)$ is contained in the normalizer of a non-split Cartan subgroup.
\item $G(\ell)$ is an exceptional group.
\item $\SL_2(\mbz/\ell\mbz) \subseteq G(\ell)$.
\end{enumerate}
\end{lemma}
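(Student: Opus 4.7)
The plan is to follow the classical Dickson classification of subgroups of $\GL_2(\mbz/\ell\mbz)$, as presented in \S 2 of \cite{serre}; the hardest conceptual ingredient is Dickson's theorem itself, which I would invoke as a black box from there. Dickson's theorem says that every finite subgroup of $\PGL_2(\mbf_\ell)$ is, up to conjugation, cyclic, dihedral, isomorphic to $A_4$, $S_4$ or $A_5$, or contains $\PSL_2(\mbf_\ell)$. First I would pass to the image $\bar G := \pi(G(\ell)) \subseteq \PGL_2(\mbf_\ell)$ under the canonical projection $\pi : \GL_2(\mbf_\ell) \longrightarrow \PGL_2(\mbf_\ell)$, apply Dickson's theorem to $\bar G$, and translate back via $\pi^{-1}$, which preserves Borels, Cartans, and their normalizers (all of which contain the center of $\GL_2$).

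Next I would handle the four standard cases. If $\bar G$ is cyclic, then all of its elements share the same fixed-point set on $\mbp^1(\ol{\mbf_\ell})$; the three possibilities are that a single $\mbf_\ell$-rational line in $\mbf_\ell^2$ is preserved (case (1), Borel), that an unordered pair of $\mbf_\ell$-rational lines is preserved (case (2), normalizer of split Cartan), or that a Galois-conjugate pair of $\mbf_{\ell^2}$-rational lines is preserved (case (3), normalizer of non-split Cartan). If $\bar G$ is dihedral, it stabilizes an unordered pair of points in $\mbp^1(\ol{\mbf_\ell})$, again giving case (2) or (3) according to whether this pair is $\mbf_\ell$-rational. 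The case $\bar G \in \{ A_4, S_4, A_5 \}$ is case (4) by definition.

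The main obstacle is the final case $\PSL_2(\mbf_\ell) \subseteq \bar G$, where I must show that $\SL_2(\mbf_\ell) \subseteq G(\ell)$, or that $G(\ell)$ already falls under a prior case. For $\ell = 2$ this is immediate since $\PSL_2(\mbf_2) = \SL_2(\mbf_2) = \GL_2(\mbf_2)$, and $\GL_2(\mbf_2)$ is a normalizer of a non-split Cartan by the paper's convention (case (3)). For $\ell = 3$, any $\bar G$ containing $\PSL_2(\mbf_3) \cong A_4$ equals $A_4$ or $\PGL_2(\mbf_3) \cong S_4$, both exceptional, so case (4) applies. For $\ell \geq 5$, I would set $H := G(\ell) \cap \pi^{-1}(\PSL_2(\mbf_\ell))$, noting that $\pi(H) = \PSL_2(\mbf_\ell)$ and $H \subseteq \SL_2(\mbf_\ell) \cdot Z$, where $Z$ denotes the center of $\GL_2(\mbf_\ell)$. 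Passing to commutators, $[H,H] \subseteq [\SL_2(\mbf_\ell)\cdot Z, \SL_2(\mbf_\ell)\cdot Z] = [\SL_2(\mbf_\ell),\SL_2(\mbf_\ell)] = \SL_2(\mbf_\ell)$ (the last equality because $\SL_2(\mbf_\ell)$ is perfect for $\ell \geq 5$), while $\pi([H,H]) = [\PSL_2(\mbf_\ell), \PSL_2(\mbf_\ell)] = \PSL_2(\mbf_\ell)$ by simplicity of $\PSL_2(\mbf_\ell)$. Hence $[H, H] \subseteq \SL_2(\mbf_\ell)$ surjects onto $\PSL_2(\mbf_\ell)$ via the quotient map with kernel $\{\pm I\}$, and the non-splitness of the central extension $1 \to \{\pm I\} \to \SL_2(\mbf_\ell) \to \PSL_2(\mbf_\ell) \to 1$ for $\ell \geq 5$ (the same ingredient used in the proof of \lemref{usefulSL2elladicversion}) forces $[H, H] = \SL_2(\mbf_\ell) \subseteq G(\ell)$, giving case (5).
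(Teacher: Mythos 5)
Your overall route is the same as the paper's, which simply cites Serre's Propositions 15 and 16 (i.e., Dickson's classification) and leaves the translation to the reader; the extra detail you supply for the case $\PSL_2(\mbf_\ell) \subseteq \ol{G}$ — intersecting with $\pi^{-1}(\PSL_2(\mbf_\ell)) = \SL_2(\mbf_\ell)\cdot Z$, passing to commutators, and using perfectness together with the non-splitness of $1 \to \{\pm I\} \to \SL_2(\mbf_\ell) \to \PSL_2(\mbf_\ell) \to 1$ — is correct and is the right way to upgrade ``contains $\PSL_2$ in $\PGL_2$'' to ``contains $\SL_2$ in $\GL_2$.''

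However, the black box you invoke is misquoted, and this creates a real hole in the case analysis. Dickson's theorem does \emph{not} say that every subgroup of $\PGL_2(\mbf_\ell)$ is cyclic, dihedral, exceptional, or contains $\PSL_2(\mbf_\ell)$: subgroups of order divisible by $\ell$ that do not contain $\PSL_2(\mbf_\ell)$ form a separate case, namely subgroups contained in a Borel. Concretely, for $\ell \geq 5$ the image in $\PGL_2(\mbf_\ell)$ of the full Borel subgroup of $\GL_2(\mbf_\ell)$ is a Frobenius group of order $\ell(\ell-1)$ (e.g.\ $F_{20}$ for $\ell = 5$), which is neither cyclic nor dihedral nor isomorphic to $A_4$, $S_4$, $A_5$, and does not contain $\PSL_2(\mbf_\ell)$; such a $G(\ell)$ falls through every branch of your argument. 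The same root cause breaks your dihedral step: the dihedral group of order $2\ell$ embeds in a Borel of $\PGL_2(\mbf_\ell)$ and stabilizes no unordered pair of points of $\mbp^1(\ol{\mbf_\ell})$, since its unipotent element of order $\ell$ fixes a unique point. The repair is to quote the classification in its correct two-part form (Serre, Props.\ 15--16): if $\ell$ divides $\#\ol{G}$ then $\ol{G}$ is contained in a Borel or contains $\PSL_2(\mbf_\ell)$ (the former giving case (1) of the lemma after pulling back); if $\ell \nmid \#\ol{G}$ then $\ol{G}$ is cyclic, dihedral, or one of $A_4$, $S_4$, $A_5$, and here every element is semisimple, so your fixed-point analysis is valid. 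With that correction the rest of your argument goes through.
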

We include the following table of indices $[\GL_2(\mbz/\ell\mbz) : G(\ell)]$, for each of the proper subgroups $G(\ell)$ given in Lemma \ref{subgroupsofGL2lemma}.  In addition to the definitions \eqref{defofBofell}, \eqref{defofNsubsofell}, and \eqref{defofNsubnsofell}, we make the following abbreviations.  For a prime $\ell$ for which $A_4 \subseteq \PGL_2(\mbz/\ell\mbz)$, we define the exceptional subgroup $\mc{E}_{A_4}(\ell) \subseteq \GL_2(\mbz/\ell\mbz)$ by
\[
\mc{E}_{A_4}(\ell) := \{ g \in \GL_2(\mbz/\ell\mbz) : \varpi(g) \in A_4 \},
\]
where $\varpi : \GL_2(\mbz/\ell\mbz) \longrightarrow \PGL_2(\mbz/\ell\mbz)$ denotes the usual projection.  The exceptional subgroups $\mc{E}_{S_4}(\ell)$ and $\mc{E}_{A_5}(\ell)$ are defined similarly.
\[
\begin{array}{|c||c|c|c|c|c|c|} \hline G(\ell) & \mc{B}(\ell) & \mc{N}_{\s}(\ell) & \mc{N}_{\ns}(\ell) & \mc{E}_{A_4}(\ell) & \mc{E}_{S_4}(\ell) & \mc{E}_{A_5}(\ell) \\ 
\hline \hline [\GL_2(\mbz/\ell\mbz) : G(\ell)] & \ell + 1 & \ell(\ell+1)/2 & \ell(\ell-1)/2 & \ell(\ell^2-1)/12 & \ell(\ell^2-1)/24 & \ell(\ell^2-1)/60 \\ 
\hline
\end{array}
\]
We note that $\mc{N}_{\ns}(2) = \GL_2(\mbz/2\mbz)$, and also that each exceptional group only occurs for certain primes $\ell$.  In particular, if the expression given in the table is not a whole number, then the associated exceptional group does not occur as a subgroup of $\GL_2(\mbz/\ell\mbz)$ for that prime $\ell$.  The conclusion of Lemma \ref{SL2notcontainedinlemma} follows immediately from this table. 

\subsection{Proof of Lemma \ref{elldividesdiscKlemma}}

We will make use of the following commutative diagram, where 
\[
\res : \gal(K(E_{\tors})/K) \rightarrow \gal(K(\mu_\infty)/K), \quad\quad \cyc : \gal(K(\mu_\infty)/K) \rightarrow \hat{\mbz}^\times 
\]
denote respectively the restriction map and the cyclotomic character (the containment $K(\mu_\infty) \subseteq K(E_{\tors})$ follows from the Weil Pairing \cite{weil}, see also \cite[Ch. III, \S 8]{silverman}).
\begin{equation} \label{commutingdiagram}
\begin{CD}
\gal(K(E[\ell^n])/K) @>\rho_{E,K}>> \GL_2(\mbz/\ell^n\mbz) \\
@V{\res}VV @V{\det}VV \\
\gal(K(\mu_{\ell^n})/K) @>\cyc>> (\mbz/\ell^n\mbz)^\times
\end{CD}
\end{equation}

By considering the commutative diagram \eqref{commutingdiagram} and Galois theory, we see that
\[
\SL_2(\mbz/\ell^n\mbz) \subseteq G(\ell^n) \neq \GL_2(\mbz/\ell^n\mbz) \; \Longrightarrow \; \det(G(\ell^n)) \neq (\mbz/\ell^n\mbz)^\times \; \Longrightarrow \; \mbq \neq \mbq(\mu_{\ell^n}) \cap K.
\]
Since $\mbq(\mu_{\ell^n})$ is totally ramified at $\ell$, it follows that $\ell$ is then ramified in $\mbq(\mu_{\ell^n}) \cap K$, so $\ell$ is ramified in $K$, and thus $\ell$ divides $\gD_K$.

\subsection{Proof of Lemma \ref{neronoggshafarevichlemmaellgeq5}}

In order to prove Lemma \ref{neronoggshafarevichlemmaellgeq5}, we will make use of the following definition and lemma, which allow us to understand in more detail the nature of the fibered products that may be present in $G$.
\begin{Definition}
Let $G$ be a profinite group and $\Sigma$ a finite simple group.  We say that \emph{$\Sigma$ occurs in $G$} if and only if there are closed subgroups $G_1$ and $N_1$ of $G$ with $N_1 \subseteq G_1 \subseteq G$, $N_1$ normal in $G_1$ and $G_1 / N_1 \simeq \Sigma$.  We further define
\[
\begin{split}
\occ(G) &:= \{ \text{finite simple non-abelian groups } \Sigma : \; \Sigma \text{ occurs in } G \}, \\
\occ_{\JH}(G) &:= \{ \text{finite simple non-abelian groups } \Sigma : \; \Sigma \text{ is a Jordan-H\"{o}lder factor of $G$} \}.
\end{split}
\]
\end{Definition}
Note that any simple Jordan-H\"{o}lder factor of $G$ occurs in $G$ (but generally not vice versa), i.e. we have 
$
\occ_{\JH}(G) \subseteq \occ(G).
$
Also note that, if 
\[
1 \longrightarrow G' \longrightarrow G \longrightarrow G'' \longrightarrow 1
\]
is an exact sequence of profinite groups, then
\begin{equation} \label{JHobservation}
\begin{split}
\occ(G) &= \occ(G') \cup \occ(G''), \\
\occ_{\JH}(G) &= \occ_{\JH}(G') \cup \occ_{\JH}(G'').
\end{split}
\end{equation}
Finally, as observed in \cite[IV-25]{serre2}, one has that
\begin{equation*} %\label{occofSL2}
\occ(\GL_2(\mbz_{\ell})) = 
\begin{cases}
\emptyset & \text{ if } \ell \in \{ 2, 3 \} \\
\{ \PSL_2(\mbz/5\mbz) \} = \{ A_5 \} & \text{ if } \ell = 5 \\
\{ \PSL_2(\mbz/\ell\mbz) \} & \text{ if } \ell > 5, \ell \equiv \pm 2 \pmod{5} \\
\{ \PSL_2(\mbz/\ell\mbz), A_5 \} & \text{ if } \ell > 5, \ell \equiv \pm 1 \pmod{5}.
\end{cases}
\end{equation*}
Thus, by \eqref{JHobservation} we have
\begin{equation} \label{occincomplementofell}
\occ(\GL_2(\mbz_{(\ell)})) = \{ A_5 \} \cup \{ \PSL_2(\mbz/p\mbz) \}_{p \neq \ell}.
\end{equation}

\begin{lemma} \label{PSL2Zmodplemma}
Let $\ell \geq 5$ be a prime and let $G \subseteq \GL_2(\mbz_{\ell})$ be a closed subgroup satisfying $\SL_2(\mbz/\ell\mbz) \subseteq G(\ell)$.
Suppose further that $\psi : G \longrightarrow H$ is a surjective group homomorphism onto a finite group $H$.  Then either 
\begin{enumerate}
\item $\PSL_2(\mbz/\ell\mbz) \in \occ_{\JH}(H)$, or 
\item $H$ is abelian and $\SL_2(\mbz_{\ell}) \subseteq \ker \psi$.
\end{enumerate}
\end{lemma}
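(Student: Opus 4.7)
\emph{Plan.} The starting point is to use Lemma \ref{usefulSL2elladicversion}: since $\ell \geq 5$, the hypothesis $\SL_2(\mbz/\ell\mbz) \subseteq G(\ell)$ upgrades to $\SL_2(\mbz_\ell) \subseteq G$. I would then dichotomize by examining the restriction $\psi|_{\SL_2(\mbz_\ell)} \colon \SL_2(\mbz_\ell) \longrightarrow H$. Since $\SL_2(\mbz_\ell)$ is the kernel of $\det \colon \GL_2(\mbz_\ell) \longrightarrow \mbz_\ell^\times$, it is normal in $\GL_2(\mbz_\ell)$ and hence in $G$; thus $Q := \psi(\SL_2(\mbz_\ell))$ is normal in $H$, and so every Jordan-H\"{o}lder factor of $Q$ is also one of $H$.

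If $Q = \{1\}$, then $\SL_2(\mbz_\ell) \subseteq \ker\psi$, and the determinant descends to an injection $G/\SL_2(\mbz_\ell) \hookrightarrow \mbz_\ell^\times$, making $H$ abelian; this is conclusion (2). Otherwise $Q \neq \{1\}$, and the finiteness of $H$ implies that $\ker(\psi|_{\SL_2(\mbz_\ell)})$ is open in $\SL_2(\mbz_\ell)$, hence contains a principal congruence subgroup $\ker(\SL_2(\mbz_\ell) \to \SL_2(\mbz/\ell^n\mbz))$ for some $n \in \mbn$. Thus $Q$ is a nontrivial finite quotient of $\SL_2(\mbz/\ell^n\mbz)$, and conclusion (1) reduces to the following claim: \emph{for $\ell \geq 5$, every nontrivial quotient of $\SL_2(\mbz/\ell^n\mbz)$ has $\PSL_2(\mbz/\ell\mbz)$ as a Jordan-H\"{o}lder factor.}

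I would establish this claim by induction on $n$. The base case $n = 1$ follows from the well-known classification of the normal subgroups of $\SL_2(\mbz/\ell\mbz)$ for $\ell \geq 5$ as $\{I\}$, $\{\pm I\}$, and the whole group, so every nontrivial quotient of $\SL_2(\mbz/\ell\mbz)$ has $\PSL_2(\mbz/\ell\mbz)$ as a Jordan-H\"{o}lder factor. For the inductive step, let $\bar{Q} = \SL_2(\mbz/\ell^n\mbz)/N$ be a nontrivial quotient and set $\Lambda := \ker(\SL_2(\mbz/\ell^n\mbz) \to \SL_2(\mbz/\ell^{n-1}\mbz)) \simeq \mathfrak{sl}_2(\mbz/\ell\mbz)$. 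If $N \cdot \Lambda \subsetneq \SL_2(\mbz/\ell^n\mbz)$, then $\SL_2(\mbz/\ell^n\mbz)/(N \cdot \Lambda)$ is a nontrivial quotient of both $\SL_2(\mbz/\ell^{n-1}\mbz)$ and $\bar{Q}$, so the inductive hypothesis delivers the conclusion. Otherwise, the second isomorphism theorem yields $\bar{Q} \simeq \Lambda/(N \cap \Lambda)$, a quotient of $\mathfrak{sl}_2(\mbz/\ell\mbz)$; by irreducibility of the adjoint action of $\SL_2(\mbz/\ell\mbz)$ on $\mathfrak{sl}_2(\mbz/\ell\mbz)$ (valid for $\ell \geq 3$), the submodule $N \cap \Lambda$ is either $\Lambda$ (forcing $\bar{Q}$ trivial, a contradiction) or $\{0\}$, in which case $N$ splits the extension $1 \to \mathfrak{sl}_2(\mbz/\ell\mbz) \to \SL_2(\mbz/\ell^n\mbz) \to \SL_2(\mbz/\ell^{n-1}\mbz) \to 1$, which is non-split for $\ell \geq 5$. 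The main obstacle in this plan is properly invoking these two classical facts—the irreducibility of the adjoint representation and the non-splitness of the congruence extension—both of which need careful citation.
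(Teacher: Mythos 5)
Your proposal is correct, and it follows the same overall skeleton as the paper (dichotomize according to how much of $\SL_2$ survives $\psi$; the "nothing survives" branch gives conclusion (2) via the determinant, exactly as in the paper), but the mechanism you use for conclusion (1) is genuinely different. The paper works at level $\ell$: it notes $\PSL_2(\mbz/\ell\mbz) \in \occ_{\JH}(G)$, and in the case $(\ker\psi)(\ell) \cap \SL_2(\mbz/\ell\mbz) \subseteq \{\pm I\}$ it observes that $\ker\psi$ is \emph{prosolvable} (its mod-$\ell$ image is metabelian and the kernel of reduction is pro-$\ell$), so that $\occ_{\JH}(\ker\psi) = \emptyset$ and the multiplicativity of $\occ_{\JH}$ in the exact sequence $1 \to \ker\psi \to G \to H \to 1$ forces $\PSL_2(\mbz/\ell\mbz) \in \occ_{\JH}(H)$; in the opposite case it upgrades $\SL_2(\mbz/\ell\mbz) \subseteq (\ker\psi)(\ell)$ to $\SL_2(\mbz_\ell) \subseteq \ker\psi$ via Lemma \ref{usefulSL2elladicversion}. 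You instead prove the self-contained structural claim that every nontrivial quotient of $\SL_2(\mbz/\ell^n\mbz)$ has $\PSL_2(\mbz/\ell\mbz)$ as a Jordan--H\"older factor, by induction using the irreducibility of the adjoint action on $M^{\tr=0}_{2\times 2}(\mbz/\ell\mbz)$ and the non-splitness of the congruence extensions. Both of the classical facts you flag are fine: irreducibility of the adjoint representation for $\ell \geq 3$ is standard (it is $\mathrm{Sym}^2$ of the standard representation), and the non-splitness of $1 \to \Lambda \to \SL_2(\mbz/\ell^n\mbz) \to \SL_2(\mbz/\ell^{n-1}\mbz) \to 1$ for all $n \geq 2$ and $\ell \geq 5$ follows from the paper's own Lemma \ref{usefulSL2elladicversion}: a complement $N$ would satisfy $N(\ell) = \SL_2(\mbz/\ell\mbz)$, so its preimage in $\SL_2(\mbz_\ell)$ would be all of $\SL_2(\mbz_\ell)$ and hence $N = \SL_2(\mbz/\ell^n\mbz)$, a contradiction. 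The trade-off: the paper's prosolvability argument is shorter and avoids any analysis of normal subgroups above level $\ell$, while your route yields a reusable classification of the finite quotients of $\SL_2(\mbz_\ell)$; one small economy you could still import from the paper is that your case ``$Q \neq \{1\}$'' only needs the quotient $H/\,\psi(\text{congruence kernel})$, so the level-$\ell$ analysis often suffices. (Both arguments, yours and the paper's, implicitly use that $\ker\psi$ is closed, i.e.\ that $\psi$ is continuous, which holds in the Galois-theoretic application.)
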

\begin{proof}
As observed earlier, since $\ell \geq 5$, the group $\PSL_2(\mbz/\ell\mbz)$ is a simple non-abelian group, and we obviously have $\{ \PSL_2(\mbz/\ell\mbz) \} \subseteq \occ_{\JH}(\SL_2(\mbz/\ell\mbz))$.  Furthermore, by the hypothesis $\SL(\mbz/\ell\mbz) \subseteq G(\ell)$ together with \eqref{JHobservation}, we see that $\{ \PSL_2(\mbz/\ell\mbz) \} \subseteq \occ_{\JH}(G(\ell))$.  Thus, again by \eqref{JHobservation}, we have 
\begin{equation} \label{PSL2occursinG}
\{ \PSL_2(\mbz/\ell\mbz) \} \subseteq \occ_{\JH}(G).
\end{equation} 
Furthermore, we have that
\begin{equation} \label{eitherorpsl2}
\frac{\pm (\ker \psi)(\ell) \cap \SL_2(\mbz/\ell\mbz)}{\{\pm I\}} 
=
\begin{cases}
\{ 1 \} \; \text{ or } \\
\PSL_2(\mbz/\ell\mbz).
\end{cases}
\end{equation}
If the left-hand side of \eqref{eitherorpsl2} is trivial, then $\ker \psi$ is prosolvable (so that $\occ_{\JH}(\ker\psi) = \emptyset$), and considering 
the exact sequence
\[
1 \longrightarrow \ker \psi \longrightarrow G \longrightarrow H \longrightarrow 1,
\]
we see by \eqref{JHobservation} and \eqref{PSL2occursinG} that $\PSL_2(\mbz/\ell\mbz) \in \occ_{\JH}(H)$.  
%On the other hand, if $\ds \frac{\pm \varpi(\ker \tilde{f}) \cap \SL_2(\mbz/\ell\mbz)}{\{\pm I\}} = \PSL_2(\mbz/\ell\mbz)$, 
If, on the other hand, we have $\PSL_2(\mbz/\ell\mbz)$ in \eqref{eitherorpsl2}, then 
$\SL_2(\mbz/\ell\mbz) \subseteq (\ker \psi)(\ell)$, which by Lemma \ref{usefulSL2elladicversion} applied to $G = \ker \psi$ implies 
that $\SL_2(\mbz_\ell) \subseteq \ker \psi$.  Thus  $H$ is abelian and $\psi$ factors 
through the determinant map, as asserted.
\end{proof}

We now proceed with the proof of Lemma \ref{neronoggshafarevichlemmaellgeq5}.  By Lemma \ref{goursatlemma}, the hypothesis that $G_\ell = \GL_2(\mbz_{\ell})$ and that $\ell$ divides $m_G$ imply that
\begin{equation} \label{fiberedproductatell2}
G \simeq \GL_2(\mbz_{\ell}) \times_{\psi} G_{(\ell)},
\end{equation} 
where $\psi_{\ell} : \GL_2(\mbz_{\ell}) \twoheadrightarrow H$ and $\psi_{(\ell)} : G_{(\ell)} \twoheadrightarrow H$ are surjective homomorphisms onto a common \emph{non-trivial} group $H$.   Under the Galois correspondence, we have $\GL_2(\mbz_{\ell}) = \gal(K(E[\ell^\infty])/K)$, $G_{(\ell)} = \gal(K(E_{\tors,(\ell)})/K)$ and $H = \gal(F/K)$, where $F := K(E[\ell^\infty]) \cap K(E_{\tors,(\ell)}) \neq K$.  Thus, the corresponding field diagram is as follows.
\begin{equation} \label{fancyfielddiagramwithF}
\begin{tikzpicture} 
\matrix(a)[matrix of math nodes, row sep=1.5em, column sep=1.5em, text height=1.5ex, text depth=0.25ex]
{K(E[\ell^\infty]) & & K(E_{\tors,(\ell)}) \\  & F \\ & K\\};
\path[-](a-1-1) edge (a-2-2);
\path[-](a-1-3) edge (a-2-2);
\path[-](a-2-2) edge (a-3-2);
\end{tikzpicture}
\end{equation}
We first claim that 
\begin{equation} \label{FcapKmuinftyneqK}
F \cap  K(\mu_{\ell^\infty}) \neq K.
\end{equation}
We separate the verification of \eqref{FcapKmuinftyneqK} into cases.

\noindent \emph{Case: $\ell \geq 5$.}  By Lemma \ref{PSL2Zmodplemma}, we see that either $\PSL_2(\mbz/\ell\mbz)$ occurs in $H$ (and thus occurs in $G_{(\ell)}$), or $H$ is abelian and $F \subseteq K(\mu_{\ell^\infty})$.  If $\ell \geq 7$ then, by \eqref{occincomplementofell}  we see that $H$ must be abelian and $F \subseteq K(\mu_{\ell^\infty})$, verifying \eqref{FcapKmuinftyneqK}.  If $\ell = 5$, we consider the further quotient induced by reduction modulo 5:
\[
H \simeq \frac{\GL_2(\mbz_5)}{\ker \psi_5} \longrightarrow \frac{\GL_2(\mbz/5\mbz)}{( \ker \psi_5 )(5)} =: H(5).
\]
Since the kernel of this quotient is pro-solvable, we see that if $\PSL_2(\mbz/5\mbz) \simeq A_5$ occurs in $H$, then it must occur in $H(5)$, and a computer calculation shows that we then must have
\[
(\ker \psi_5)(5) \subseteq \left\{ \begin{pmatrix} \gl & 0 \\ 0 & \gl \end{pmatrix} : \gl \in (\mbz/5\mbz)^\times \right\},
\]
and thus
\[
\langle \SL_2(\mbz_5) , \ker \psi_5 \rangle \subseteq \left\{ g \in \GL_2(\mbz_5) : \left( \frac{\det(g) \mod 5}{5} \right) = 1 \right\}.
\]
By the Galois correspondence, we then have
\[
F \cap K(\mu_{5^\infty}) = K(E[5^\infty])^{\langle \SL_2(\mbz_5), \ker \psi_5 \rangle} \supseteq K(\sqrt{5}) \neq K,
\]
where we are using the fact that $G(5) = \GL_2(\mbz/5\mbz)$, which precludes the possibility that $K(\sqrt{5}) = K$.  Thus in any case, \eqref{FcapKmuinftyneqK} also holds for $\ell = 5$.

\noindent \emph{Case: $\ell = 3$.}  As in the previous case, we have that \eqref{fiberedproductatell2} holds with $\ell = 3$.  By Galois theory, we have that
\[
F := K(E[3^{\infty}])^{\ker \psi_3} \supseteq K(E[3^{\infty}])^{\langle \SL_2(\mbz_3), \ker \psi_3 \rangle}  =  K(\mu_{3^\infty}) \cap F.
\]
As in the previous case, since $\ker \psi_3 \neq \GL_2(\mbz_3)$, we have $F \neq K$.  The following lemma will then imply that $K(\mu_{3^\infty}) \cap F \neq K$.

\begin{lemma} \label{kernelat3lemma}
Let $N \unlhd \GL_2(\mbz_3)$ be a closed normal subgroup satisfying $\langle \SL_2(\mbz_3), N \rangle = \GL_2(\mbz_3)$.  Then $N = \GL_2(\mbz_3)$.
\end{lemma}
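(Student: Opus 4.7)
The plan is to apply Lemma \ref{usefulSL2elladicversion} (the $\ell = 3$ case) to $N$, which will yield $\SL_2(\mbz_3) \subseteq N$; combined with the fact that $\det(N) = \mbz_3^\times$ (immediate from $\SL_2(\mbz_3) \cdot N = \GL_2(\mbz_3)$), this gives $N = \GL_2(\mbz_3)$. It therefore suffices to verify the two hypotheses of that lemma, namely $N(3) = \GL_2(\mbz/3)$ and $\SL_2(\mbz/9) \subseteq N(9)$.

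For the first, reducing $\SL_2(\mbz_3) \cdot N = \GL_2(\mbz_3)$ modulo $3$ gives $\SL_2(\mbz/3) \cdot N(3) = \GL_2(\mbz/3)$, with $N(3)$ normal in $\GL_2(\mbz/3)$. The complete list of normal subgroups of $\GL_2(\mbz/3)$ is $\{1\}$, $\{\pm I\}$, $Q_8$, $\SL_2(\mbz/3)$, $\GL_2(\mbz/3)$; since the first four all lie inside $\SL_2(\mbz/3)$, only $N(3) = \GL_2(\mbz/3)$ can satisfy the product condition.

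For the second, set $M_9 := N(9) \cap \SL_2(\mbz/9)$. Both $N(9)$ and $\SL_2(\mbz/9)$ are normal in $\GL_2(\mbz/9)$, and their images in $\GL_2(\mbz/9)/M_9$ have trivial intersection and full product, so
\[
\GL_2(\mbz/9)/M_9 \;\simeq\; N(9)/M_9 \;\times\; \SL_2(\mbz/9)/M_9,
\]
with $N(9)/M_9 \simeq (\mbz/9)^\times$ abelian. Reducing modulo $3$ makes $\GL_2(\mbz/3)/M_9(3)$ a quotient of this abelian group, hence abelian, which forces $M_9(3) \supseteq [\GL_2(\mbz/3), \GL_2(\mbz/3)] = \SL_2(\mbz/3)$ and so $M_9(3) = \SL_2(\mbz/3)$. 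Next, $M_9 \cap K$ with $K := \ker(\SL_2(\mbz/9) \to \SL_2(\mbz/3))$ is $\GL_2(\mbz/3)$-invariant inside $K \simeq M_{2\times 2}^{\tr \equiv 0}(\mbz/3)$, which is the irreducible adjoint representation of $\SL_2(\mbz/3)$; hence $M_9 \cap K$ is either all of $K$ (in which case $M_9 = \SL_2(\mbz/9)$ and we are done) or trivial.

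The remaining case $M_9 \cap K = 0$ must be ruled out; here $M_9$ would be an order-$24$ subgroup of $\GL_2(\mbz/9)$ isomorphic to $\SL_2(\mbz/3)$, normal in $\GL_2(\mbz/9)$, and still acting irreducibly on $(\mbz/3)^2$ through its reduction modulo $3$. A Schur's lemma argument on this irreducible mod-$3$ action, combined with a straightforward lift of the identity $[c, m] = 0$ from $\mbz/3$ to $\mbz/9$, shows that the centralizer $C := C_{\GL_2(\mbz/9)}(M_9)$ coincides with the scalar center, of order $6$. Then the conjugation injection $\GL_2(\mbz/9)/C \hookrightarrow \Aut(M_9) \simeq \Aut(\SL_2(\mbz/3)) \simeq S_4$ would give $648 = [\GL_2(\mbz/9) : C] \leq 24$, an impossibility. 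This centralizer/Schur computation is the main obstacle; once it is in hand, Lemma \ref{usefulSL2elladicversion} finishes the proof.
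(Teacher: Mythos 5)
Your proof is correct, but it takes a genuinely different route from the paper's. The paper disposes of Lemma~\ref{kernelat3lemma} in two lines: a MAGMA computation (the \texttt{L9p} script in the appendix) showing that the only normal subgroup $H \unlhd \GL_2(\mbz/9\mbz)$ with $\langle \SL_2(\mbz/9\mbz), H\rangle = \GL_2(\mbz/9\mbz)$ is $\GL_2(\mbz/9\mbz)$ itself, followed by the lifting statement \eqref{GL2elladicversion} with $\gb = 1$. You instead verify the two hypotheses of Lemma~\ref{usefulSL2elladicversion} for $N$ by hand (classification of normal subgroups of $\GL_2(\mbz/3\mbz)$ for the mod~$3$ step; the fibered-product/abelianization argument, irreducibility of the adjoint module $M_{2\times 2}^{\tr = 0}(\mbz/3\mbz)$, and a centralizer-plus-$\Aut(\SL_2(\mbz/3\mbz)) \simeq S_4$ counting argument for the mod~$9$ step), and I checked each of these steps — including the key claim that $\GL_2(\mbz/3\mbz)/M_9(3)$ is a quotient of $N(9)/M_9 \simeq (\mbz/9\mbz)^\times$, which does hold because $N(9)$ surjects onto $N(3) = \GL_2(\mbz/3\mbz)$. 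What your approach buys is a computer-free proof; what the paper's buys is brevity. Two remarks. First, your argument can be shortened considerably: the same surjection $N(9) \twoheadrightarrow \GL_2(\mbz/3\mbz)$ shows that $\SL_2(\mbz/9\mbz)/M_9 \simeq \GL_2(\mbz/9\mbz)/N(9) \simeq \ker(\pi_{9,3})/(\ker(\pi_{9,3}) \cap N(9))$ is abelian, so $\GL_2(\mbz/9\mbz)/M_9$ is a direct product of two abelian groups, hence abelian, and therefore $M_9 \supseteq [\GL_2(\mbz/9\mbz), \GL_2(\mbz/9\mbz)] = \SL_2(\mbz/9\mbz)$ (this is \eqref{commutatorcontainmentwewant} with $G = \GL_2(\mbz_3)$ and $n = 2$); this renders the entire second half of your argument — the dichotomy on $M_9 \cap K$ and the Schur/centralizer computation — unnecessary. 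Second, if you keep your longer version, you should explicitly justify the standard facts it leans on: the list of normal subgroups of $\GL_2(\mbz/3\mbz)$, the irreducibility of the trace-zero matrices under conjugation (the paper only proves the weaker statement that the elements $gXg^{-1} - X$ generate), and $|\Aut(\SL_2(\mbz/3\mbz))| = 24$; all are true, but each is doing real work in your proof.
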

\begin{proof}
A computer calculation shows that, if $H \unlhd \GL_2(\mbz/9\mbz)$ is a normal subgroup satisfying 
\[
\langle \SL_2(\mbz/9\mbz), H \rangle = \GL_2(\mbz/9\mbz),
\]
then $H = \GL_2(\mbz/9\mbz)$.  Taking $N$ as in the statement of the lemma and setting $H := N(9)$, we see that $N(9) = \GL_2(\mbz/9\mbz)$, and applying \eqref{GL2elladicversion} with $\gb = 1$, we conclude that $N = \GL_2(\mbz_3)$.
\end{proof}
Applying Lemma \ref{kernelat3lemma} with $N = \ker \psi_3$, we find that $K(\mu_{3^\infty}) \cap F \neq K$, since $F \neq K$, verifying \eqref{FcapKmuinftyneqK} in the $\ell = 3$ case as well.  

Finally, we observe that \eqref{FcapKmuinftyneqK} implies the conclusion of Lemma \ref{neronoggshafarevichlemmaellgeq5}.  Indeed, let $\ell$ be any odd prime with $\ell \nmid \gD_K$.  Since $G_\ell = \GL_2(\mbz_\ell)$, we have $K \cap \mbq(\mu_{\ell^\infty}) = \mbq$, and so any prime $\mf{L} \subseteq O_K$ over $\ell$ is totally ramified in $K(\mu_{\ell^\infty})$, hence ramified in $F \cap  K(\mu_{\ell^\infty})$.  Thus, by \eqref{fancyfielddiagramwithF}, $\mf{L}$ is ramified in $K(E_{\tors, (\ell)})$.  By Theorem \ref{neronoggshafarevichthm}, we find that $\ell \mid N_{K/\mbq}(\gD_E)$, finishing the proof.

\subsection{Proof of Lemma \ref{ellequals2lemma}}

The proof of Lemma \ref{ellequals2lemma} will make use of the following sub-lemma.
\begin{lemma} \label{ramificationlemma}
Let $K$ be a number field for which $2 \nmid \gD_K$ and let $\mf{p} \subseteq \mc{O}_K$ be a prime ideal lying over $2$.  Let $\ga \in \mc{O}_K - \{ 0 \}$ be any element for which $\mf{p} \nmid \ga \mc{O}_K$.  Then $2\ga$ is not a square in $K^\times$, so the field $K(\sqrt{2\ga})$ is a quadratic extension of $K$.  Furthermore, $\mf{p}$ ramifies in $K(\sqrt{2\ga})$. 
\end{lemma}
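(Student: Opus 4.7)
The plan is to use the hypothesis $2 \nmid \gD_K$, which says that $2$ is unramified in $K$, in order to pin down the $\mf{p}$-adic valuation of $2\ga$ precisely, and then to argue by parity in both parts of the lemma.

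First I would observe that, since $2$ is unramified in $K$, the ramification index of $\mf{p}$ over $2$ equals $1$, so $v_{\mf{p}}(2) = 1$. Combined with the hypothesis $\mf{p} \nmid \ga \mc{O}_K$, i.e.\ $v_{\mf{p}}(\ga) = 0$, this gives
\[
v_{\mf{p}}(2\ga) = v_{\mf{p}}(2) + v_{\mf{p}}(\ga) = 1.
\]
If $2\ga = \gb^2$ were a square in $K^\times$, then $v_{\mf{p}}(2\ga) = 2 v_{\mf{p}}(\gb)$ would be even, contradicting the above. Hence $2\ga$ is not a square, and $L := K(\sqrt{2\ga})$ is a genuine quadratic extension of $K$.

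For the ramification statement, let $\mathfrak{P} \subseteq \mc{O}_L$ be any prime lying over $\mf{p}$, and let $e := e(\mathfrak{P}/\mf{p}) \in \{1,2\}$. By definition of the ramification index, for any $x \in K^\times$ one has $v_{\mathfrak{P}}(x) = e \cdot v_{\mf{p}}(x)$. Applying this to $x = 2\ga$ and using $v_{\mf{p}}(2\ga) = 1$ yields
\[
v_{\mathfrak{P}}(2\ga) = e.
\]
On the other hand, $2\ga = (\sqrt{2\ga})^2$ inside $L$, so
\[
v_{\mathfrak{P}}(2\ga) = 2 \, v_{\mathfrak{P}}(\sqrt{2\ga}) \in 2\mbz.
\]
Therefore $e$ must be even, and since $e \leq [L:K] = 2$ we conclude $e = 2$, i.e.\ $\mf{p}$ ramifies in $K(\sqrt{2\ga})$, as required.

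This proof is essentially a two-line valuation argument, so there is no real obstacle; the only subtlety is the appeal to $2 \nmid \gD_K$ to ensure $v_{\mf{p}}(2) = 1$ exactly (not merely $\geq 1$), which is what makes the parity argument work.
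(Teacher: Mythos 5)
Your proof is correct and follows essentially the same route as the paper: both compute $v_{\mf{p}}(2\ga)=1$ from $2\nmid\gD_K$ and $\mf{p}\nmid\ga\mc{O}_K$, deduce non-squareness by parity, and then conclude ramification because the square root forces the extended valuation to take a half-integer (equivalently, the ramification index must be even). The only difference is cosmetic: the paper normalizes the valuation on $L$ to extend $v_{\mf{p}}$ (getting the value $\tfrac12$), while you keep the integer normalization and argue on the parity of $e$.
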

\begin{proof}
Let $v_{\mf{p}}$ denote the $\mf{p}$-adic valuation on $K$, normalized so that $v_{\mf{p}}(K^\times) = \mbz$.  Note that, since by assumption $2$ is unramified in $K$ and $v_{\mf{p}}(\ga) = 0$, we have
\begin{equation} \label{valuationof2alpha}
v_{\mf{p}}(2\ga) = v_{\mf{p}}(2) + v_{\mf{p}}(\ga) = 1,
\end{equation}
and so in particular $2\ga$ cannot be a square in $K^\times$, as asserted.  Next, let $L := K(\sqrt{2\ga})$, fix any prime $\mf{P} \subseteq \mc{O}_L$ lying over $\mf{p}$ and let $v_\mf{P}$ be the $\mf{P}$-adic valuation on $L$, normalized so that it extends $v_{\mf{p}}$ on $K$.  By \eqref{valuationof2alpha}, we then have
\[
v_{\mf{P}}\left( (2\ga)^{1/2} \right) = \frac{1}{2} v_{\mf{p}}(2\ga) = \frac{1}{2}.
\] 
It follows that $L$ is ramified over $K$ at $\mf{p}$, as asserted.
\end{proof}

We now proceed with the proof of Lemma \ref{ellequals2lemma}.  Since we are assuming that $4$ divides $r'$, by Lemma \ref{goursatlemma} we may write $G(r')$ as a fibered product:
\begin{equation} \label{fiberedproductat2}
G(r') = G(4) \times_{\psi} G(r'_{(2)}).
\end{equation}
\noindent \emph{Case:  $\GL_2(\mbz/4\mbz) \times \{ 1_{r'(2)} \} \not \subseteq G(r')$.}  In this case, either $G(4) \neq \GL_2(\mbz/4\mbz)$ or $G(4) = \GL_2(\mbz/4\mbz)$ and the common quotient $\psi_2(G(4)) = \psi_{(2)}(G(r'_{(2)}))$ in \eqref{fiberedproductat2} is nontrivial.
If $G(4) \neq \GL_2(\mbz/4\mbz)$, we find that $2 \leq [\GL_2(\mbz/4\mbz) : G(4) ] \leq [ \pi^{-1}(G(r'_{(2)})) : G(r')]$, and so the result of the lemma follows.  If on the other hand $G(4) = \GL_2(\mbz/4\mbz)$, then the common quotient in \eqref{fiberedproductat2} is nontrivial, and since
\[
\begin{split}
\pi^{-1}\left( G(r_{(2)}') \right) &= \GL_2(\mbz/4\mbz) \times G(r_{(2)}'), \\
G\left( r' \right) &= \GL_2(\mbz/4\mbz) \times_{\psi} G(r_{(2)}'),
\end{split}
\]
we thus have $2 \leq [ \pi^{-1}(G(r'_{(2)})) : G(r')]$, proving the lemma in this sub-case as well.

\noindent \emph{Case:  $\GL_2(\mbz/4\mbz) \times \{ 1_{r'(2)} \} \subseteq G(r')$.}   In this case, \eqref{fiberedproductat2} is a full product:
\begin{equation} \label{fullproductobservation}
G(r') = \GL_2(\mbz/4\mbz) \times G(r'_{(2)}).
\end{equation}
By Lemma \ref{usefulSL2elladicversion}, either $G(8)$ is an index 2 subgroup of $\GL_2(\mbz/8\mbz)$ that surjects onto $\GL_2(\mbz/4\mbz)$, or $G_2 = \GL_2(\mbz_2)$.  Let us treat the former subcase first.  A computer search reveals that there are exactly $4$ index $2$ subgroups of $\GL_2(\mbz/8\mbz)$ that map surjectively onto $\GL_2(\mbz/4\mbz)$, namely
\[
\ker(\chi_8), \; \ker(\chi_8\chi_4), \; \ker(\chi_8\ve), \ker(\chi_8\chi_4\ve),
\]
where $\chi_8 : \GL_2(\mbz/8\mbz) \rightarrow \{ \pm 1 \}$ (resp. $\chi_4 : \GL_2(\mbz/8\mbz) \rightarrow \{ \pm 1 \}$) denotes the Kronecker symbol associated to the quadratic field $\mbq(\sqrt{2})$ (resp. to $\mbq(\sqrt{-1})$), precomposed with the determinant map, and $\ve :\GL_2(\mbz/8\mbz) \rightarrow \GL_2(\mbz/2\mbz) \rightarrow \{ \pm 1 \}$ denotes the unique non-trivial character of order $2$ on $\GL_2(\mbz/2\mbz)$, precomposed with reduction modulo $2$.  We have
\begin{equation} \label{quadraticfields}
\begin{split}
&K(E[8])^{\ker(\chi_8)} = K(\sqrt{2}), \quad \quad \quad K(E[8])^{\ker(\chi_8\ve)} = K(\sqrt{2\gD_E}), \\
&K(E[8])^{\ker(\chi_8\chi_4)} = K(\sqrt{-2}), \; \quad K(E[8])^{\ker(\chi_8\chi_4\ve)} = K(\sqrt{-2\gD_E}).
\end{split}
\end{equation}
Here, by $K(\sqrt{\pm 2\gD_E})$ we mean the quadratic field $K(\sqrt{\pm 2 \gD(E_{\weier})})$, where $E_{\weier}$ is any fixed Weierstrass model of $E$ and $\gD(E_{\weier}) \in K^\times$ denotes its discriminant (note that although $\gD(E_{\weier})$ depends on the choice of $E_{\weier}$, the quadratic field $K(\sqrt{\pm 2 \gD(E_{\weier})})$ depends only on $E$).  By \eqref{quadraticfields}, we thus have
\[
G(8) = \ker(\chi_8) \; \Longrightarrow \; \sqrt{2} \in K \quad \text{ and } \quad G(8) = \ker(\chi_8\chi_4) \; \Longrightarrow \; \sqrt{-2} \in K,
\]
either of which imply that $2 \mid \gD_K$.  On the other hand, for any Weierstrass model $E_{\weier}$ of $E$, we have 
\begin{equation} \label{implicationswithepsilon}
\begin{split}
G(8) = \ker(\chi_8\ve) \; &\Longrightarrow \; \sqrt{2\gD(E_{\weier})} \in K, \\
G(8) = \ker(\chi_8\chi_4\ve) \; &\Longrightarrow \; \sqrt{-2\gD(E_{\weier})} \in K.
\end{split}
\end{equation}
Let us suppose for the sake of contradiction that 
\begin{equation} \label{2doesnotdivide}
2 \nmid \gD_K N_{K/\mbq}(\gD_E).  
\end{equation}
Fix a prime ideal $\mf{p} \subseteq \mc{O}_K$ lying over $2$.  By \eqref{2doesnotdivide}, we must have $\mf{p} \nmid \gD_E$, and we may thus find a Weierstrass model $E_{\weier}$ of $E$ satisfying
$
\mf{p} \nmid \gD(E_{\weier})
$.
Applying Lemma \ref{ramificationlemma} with $\ga = \pm \gD(E_{\weier})$, we see that $\sqrt{\pm2\gD(E_{\weier})} \notin K$, contradicting \eqref{implicationswithepsilon}.  Thus, we must have $2 \mid \gD_K N_{K/\mbq}(\gD_E)$ whenever $G(8)$ has index 2 in $\GL_2(\mbz/8\mbz)$.

We now treat the second subcase, in which $G_2 = \GL_2(\mbz_2)$.  We evidently must have a non-trivial common quotient in
\begin{equation*} %\label{tallfiberedproductat2}
G_{r'} \simeq \GL_2(\mbz_2) \times_{\psi} G_{r'_{(2)}}.
\end{equation*}
(If this fibered product were over a trivial quotient, then $2$ would not divide $m_G$.)  We note that any non-trivial finite quotient of $\GL_2(\mbz_2)$ must have order divisible by $2$ and that $\ker(G_{r'_{(2)}} \rightarrow G(r'_{(2)}))$ is a profinite group whose finite quotients each have order coprime with $2$.  It follows that the image of $G$ under $\id_{2} \times \pi_{(r'_{(2)})^\infty,r'_{(2)}}$ has the form
\begin{equation} \label{shortfiberedproductat2}
\GL_2(\mbz_2) \times_\psi G(r'_{(2)}),
\end{equation}
a fibered product with a common quotient of order divisible by $2$ (and hence non-trivial).  Consider the subgroup $N := \ker \psi_2 \subseteq \GL_2(\mbz_2)$ where $\psi = (\psi_2,\psi_{(2)})$ in \eqref{shortfiberedproductat2}.  The assumption $\GL_2(\mbz/4\mbz) \times \{ 1_{r'(2)} \} \subseteq G(r')$ then implies that $N(4) = \GL_2(\mbz/4\mbz)$ (otherwise the mod $r'$ image of \eqref{shortfiberedproductat2} would have a nontrivial fibering between $G(4)$ and $G(r'_{(2)})$, contradicting \eqref{fullproductobservation}).  By Lemma \ref{usefulSL2elladicversion}, we find that $[\GL_2(\mbz/8\mbz) : N(8) ] = 2$.  By the same computation as mentioned in the previous subcase, we have
\[
N(8) \in \{ \ker(\chi_8), \; \ker(\chi_8\chi_4), \; \ker(\chi_8\ve), \ker(\chi_8\chi_4\ve) \},
\]
and it follows by \eqref{quadraticfields} and Galois theory that one of the fields $K(\sqrt{2})$, $K(\sqrt{-2})$, $K(\sqrt{2\gD_E})$, or $K(\sqrt{-2\gD_E})$ must be contained in $K(E_{\tors, (2)})$.  By Lemma \ref{ramificationlemma} and Theorem \ref{neronoggshafarevichthm}, it follows that, if $2 \nmid \gD_K$ then $2$ divides $N_{K/\mbq}(\gD_E)$.  This finishes the proof of Lemma \ref{ellequals2lemma}. 

\section{Appendix:  MAGMA code for computations}

The computations referenced throughout the paper were undertaken using the computational package MAGMA (see \cite{MAGMA}); below we give scripts that the interested reader can run therein to reproduce those results. \\

----------------------------------------------------

\noindent {\tt{> IsConjugateToSubgroup := function(G1,G2,m)}} \\
\noindent {\tt{function>       answer := false;}} \\
\noindent {\tt{function>       for H2 in Subgroups(G2 : OrderEqual := \#G1) do}} \\
\noindent {\tt{function|for>           if IsConjugate(GL(2,Integers(m)),G1,H2`subgroup) then}} \\
\noindent {\tt{function|for|if>                        answer := true;}} \\
\noindent {\tt{function|for|if>                        break H2;}} \\
\noindent {\tt{function|for|if>                end if;}} \\
\noindent {\tt{function|for>   end for;}} \\
\noindent {\tt{function>       return answer;}} \\
\noindent {\tt{function> end function;}}\\

---------------------------------------------------- \\

\noindent {\tt{> G3 := GL(2,Integers(3));}} \\
\noindent {\tt{> C3 := CommutatorSubgroup(G3,G3);}} \\
\noindent {\tt{> C3 eq SL(2,Integers(3));}} \\

---------------------------------------------------- \\

\noindent {\tt{> L9 := [];}} \\
\noindent {\tt{> G9 := GL(2,Integers(9));}} \\
\noindent {\tt{> for H in Subgroups(G9) do}} \\
\noindent {\tt{for> H3 := MatrixGroup<2,Integers(3) | Generators(H`subgroup)>;}} \\
\noindent {\tt{for> if H3 eq G3 and not SL(2,Integers(9)) subset H`subgroup then}} \\
\noindent {\tt{for|if> Append($\sim$L9,H`subgroup);}} \\
\noindent {\tt{for|if> end if;}} \\
\noindent {\tt{for> end for;}} \\
\noindent {\tt{> IsConjugateToSubgroup(L9[1],L9[2],9);}} \\
\noindent {\tt{> \#GL(2,Integers(9))/\#L9[2];}} \\

----------------------------------------------------- \\

\noindent {\tt{> G5 := GL(2,Integers(5));}} \\
\noindent {\tt{> L5 := [];}} \\
\noindent {\tt{> for N in NormalSubgroups(G5) do}} \\
\noindent {\tt{for> if IsDivisibleBy(Floor(\#GL(2,Integers(5))/\#N`subgroup),60) then}} \\
\noindent {\tt{for|if> Append($\sim$L5,N`subgroup);}} \\
\noindent {\tt{for|if> end if;}} \\
\noindent {\tt{for> end for;}} \\
\noindent {\tt{> L5;}} \\

----------------------------------------------------- \\

\noindent {\tt{> L9p := [];}} \\
\noindent {\tt{> for H in NormalSubgroups(G9) do}} \\
\noindent {\tt{for> Hext := MatrixGroup<2,Integers(9) | Generators(SL(2,Integers(9))), Generators(H`subgroup)>;}} \\
\noindent {\tt{for> if Hext eq GL(2,Integers(9)) then}} \\
\noindent {\tt{for|if> Append($\sim$L9p,Hext);}} \\
\noindent {\tt{for|if> end if;}} \\
\noindent {\tt{for> end for;}} \\
\noindent {\tt{> \#L9p;}} \\
\noindent {\tt{> L9p[1] eq GL(2,Integers(9));}} \\

----------------------------------------------------- \\

\noindent {\tt{> L8 := [];}} \\
\noindent {\tt{> for G in Subgroups(G8 : IndexEqual := 2) do}} \\
\noindent {\tt{for> G4 := MatrixGroup<2,Integers(4) | Generators(G`subgroup)>;}} \\
\noindent {\tt{for> if G4 eq GL(2,Integers(4)) then}} \\
\noindent {\tt{for|if> Append($\sim$L8,G`subgroup);}} \\
\noindent {\tt{for|if> end if;}} \\
\noindent {\tt{for> end for;}} \\
\noindent {\tt{> L8;}} \\

\end{document}